\definecolor{mygray}{gray}{0.75}
\newtheorem{tvrz}{Proposition}[section]
\newtheorem{theorem}[tvrz]{Theorem}
\theoremstyle{definition}
\newtheorem{definice}[tvrz]{Definition}
\theoremstyle{remark}
\newtheorem{rem}[tvrz]{Remark}
\theoremstyle{definition}
\newtheorem{mdexample}[tvrz]{Example}
\newenvironment{example}%
{\begin{mdframed}[topline=false, rightline=false, bottomline=false, linewidth=0.2em, linecolor=mygray, innerleftmargin=0.5em, innerrightmargin=0,leftmargin=-0.7em]\begin{mdexample}}%
{\end{mdexample}\end{mdframed}}
\def\^{\wedge}
\def\<{\langle}
\def\>{\rangle}
\def\M{\mathcal{M}}
\def\N{\mathbb{N}}
\def\Cn{\mathbb{C}}
\def\cN{\mathcal{N}}
\def\cL{\mathcal{L}}
\def\X{\mathfrak{X}}
\def\g{\mathfrak{g}}
\def\d{\mathfrak{d}}
\def\R{\mathbb{R}}
\def\C{\mathcal{C}}
\def\dD{\mathrm{D}}
\def\E{\mathcal{E}}
\def\J{\mathcal{J}}
\def\Z{\mathbb{Z}}
\def\ol{\overline}
\def\fJ{\mathbf{J}}
\def\fp{\mathbf{p}}
\def\fq{\mathbf{q}}
\def\frm{\mathfrak{m}}
\def\frc{\mathfrak{c}}
\def\bbz{\mathbbm{z}}
\def\dr{\mathrm{d}}
\def\da{\mathsf{a}}
\def\1{\mathbbm{1}}
\def\f1{\mathbf{1}}
\def\fJ{\mathbf{J}}
\def\A{\mathcal{A}}
\def\~{\widetilde}
\def\tr{\triangleright}
\newcommand{\Li}[1]{\mathcal{L}_{#1}}
\newcommand\ul[1]{\underline{#1}}
\DeclareMathOperator{\gdim}{gdim}
\DeclareMathOperator{\Op}{\textbf{Op}}
\DeclareMathOperator{\gVect}{\textbf{gVec}}
\DeclareMathOperator{\gcAs}{\textbf{gcAs}}
\DeclareMathOperator{\ad}{ad}
\DeclareMathOperator{\Lin}{Lin}
\DeclareMathOperator{\gDer}{gDer}
\DeclareMathOperator{\an}{an}
\DeclareMathOperator{\gr}{gr}
\DeclareMathOperator{\grk}{grk}
\begin{document}
\begin{flushright}
\today
\end{flushright}
\vspace{0.7cm}
\begin{center}

\baselineskip=13pt {\Large \bf{Graded Generalized Geometry}\\}
 \vskip0.5cm
 {\it Dedicated to Branislav Jurčo on the occasion of his 60th birthday}  
 \vskip0.7cm
 {\large{Jan Vysoký$^{1}$}}\\
 \vskip0.6cm
$^{1}$\textit{Faculty of Nuclear Sciences and Physical Engineering, Czech Technical University in Prague\\ Břehová 7, 115 19 Prague 1, Czech Republic, jan.vysoky@fjfi.cvut.cz}\\
\vskip0.3cm
\end{center}

\begin{abstract}
Generalized geometry finds many applications in the mathematical description of some aspects of string theory. In a nutshell, it explores various structures on a generalized tangent bundle associated to a given manifold. In particular, several integrability conditions can be formulated in terms of a canonical Dorfman bracket, an example of Courant algebroid. On the other hand, smooth manifolds can be generalized to involve functions of $\Z$-graded variables which do not necessarily commute. This leads to a mathematical theory of graded manifolds. It is only natural to combine the two theories by exploring the structures on a generalized tangent bundle associated to a given graded manifold. 

After recalling elementary graded geometry, graded Courant algebroids on graded vector bundles are introduced. We show that there is a canonical bracket on a generalized tangent bundle associated to a graded manifold. Graded analogues of Dirac structures and generalized complex structures are explored. We introduce differential graded Courant algebroids which can be viewed as a generalization of Q-manifolds. A definition and examples of graded Lie bialgebroids are given.  
\end{abstract}

{\textit{Keywords}: Generalized geometry, Courant algebroids, graded manifolds, graded vector bundles, Dirac structures, generalized complex geometry}.

\section*{Introduction} \label{sec_introduction}
\addcontentsline{toc}{section}{Introduction}
Generalized geometry \cite{Hitchin:2004ut, Gualtieri:2003dx, cavalcanti2005new} and Courant algebroids \cite{liu1997manin, 1999math.....10078R, Severa:2017oew} became one of the most useful tools in the understanding of geometry behind string theory. Courant algebroids can be used to describe current algebras of $\sigma$-models \cite{Alekseev:2004np}, various aspects of T-duality \cite{Baraglia:2013wua, Cavalcanti:2011wu, Garcia-Fernandez:2016ofz, Grana:2008yw}, geometrical approach to (exceptional, heterotic) supergravity \cite{Coimbra:2011nw, Coimbra:2012af, garcia2014torsion, Strickland-Constable:2019qti} and Poisson--Lie T-duality \cite{Severa:2015hta,Severa:2016prq, Severa:2017kcs, Severa:2018pag}. See also our contributions to the subject \cite{jurvco2016heterotic,Jurco:2016emw, Vysoky:2017epf, Jurco:2017gii, Jurco:2019tgt}.

On the other and, graded manifolds rose became a useful tool both in differential geometry and mathematical physics. They can be viewed as a modification of supergeometry, allowing for a grading of local coordinates governed by the abelian group $\mathbb{Z}$. Theory of graded manifolds is a crucial mathematical notion for BV \cite{batalin1983quantization,batalin1984gauge} and AKSZ \cite{alexandrov1997geometry} formalism in quantum field theory. There are excellent materials covering this topic, see e.g. \cite{qiu2011introduction,ikeda2017lectures,2011RvMaP..23..669C}. Graded manifolds provide an elegant description of Courant algebroids \cite{roytenberg2002structure}, they can be used to construct Drinfel'd doubles for Lie bialgebroids \cite{Voronov:2001qf}, Lie algebroids and their higher analogues \cite{Voronov:2010hj} or they are utilized for integration of Courant algebroids \cite{li2011integration, vsevera2015integration}. In our treatment of graded geometry, we follow the definitions and formalism established in \cite{vysoky2021global}. Let us point out that recently a similar treatment appeared in \cite{2021arXiv210813496K}. See also \cite{mehta2006supergroupoids, jubin2019differential, fairon2017introduction}. 

This paper aims to develop a graded version of generalized geometry. In particular, we will study structures on the direct sum $T[\ell]\M \oplus T^{\ast}\M$, where $\M$ is a general graded manifold and $T[\ell]\M$ and $T^{\ast}\M$ are its (degree shifted) tangent and cotangent bundles. More generally, we are going to introduce a concept of a graded Courant algebroid and provide non-trivial examples. In this way, we obtain a nice geometrical realization of (graded) Loday brackets introduced in \cite{Kosmann1996}. 

The paper is organized as follows:

In \S 1, we summarize a basic knowledge of graded geometry necessary for this paper. In particular, we give a definition of a graded manifold and a basic example of a graded domain. Graded vector bundles are introduced together with an example of a tangent bundle to a graded manifold. We recall basic notions of graded vector bundle theory, e.g. dual graded vector bundles, degree shifted graded vector bundles, and vector bundle maps. We introduce a graded version of an exterior algebra of differential forms. Rather then taking a usual path using local coordinates, we bring an equivalent global and fully algebraic description. We show that graded Poisson structures correspond to skew-symmetric (even case) or symmetric (odd case) bivector fields. 

In \S 2, we define quadratic graded vector bundles and the main subject of this paper, graded Courant algebroids. Their basic properties following from the axioms are deduced. We prove that the generalized tangent bundle $T[\ell]\M \oplus T^{\ast}\M$ can be equipped with a canonical structure of a graded Courant algebroid of degree $\ell$, generalizing a notion of a well-known Dorfman bracket (twisted by a closed $3$-form $H$). We show how to generalize the Ševera classification \cite{Severa:2017oew} of exact Courant algebroids to the graded case.

Dirac structures are one of the most important sub-structures of Courant algebroids. In \S 3, we define their graded analogue. This requires one to recall the notion of subbundles of graded vector bundles. We examine maximal isotropic subspaces of quadratic vector spaces and use this notion to define maximal isotropic subbundles. Finally, we define Dirac structures on graded Courant algebroids. We prove that a graph of a vector bundle map $\Pi^{\sharp}: T^{\ast}\M \rightarrow T[\ell]\M$ defines a Dirac structure with respect to the $H$-twisted Dorfman bracket, if and only if it corresponds to the unique $H$-twisted graded Poisson structure of degree $\ell$. 

In \S 4, we show how generalized complex structures can be defined as certain involutive subbundles of complexified Courant algebroids. Equivalently, they correspond to orthogonal endomorphisms satisfying $\J^{2} = - \1_{\E}$ whose eigenspaces form subbundles (this has to be assumed in the graded setting) with a vanishing Nijenhuis tensor. We show that a symplectic form $\omega$ of degree $-\ell$, viewed as a vector bundle map $\omega^{\flat}: T[\ell]\M \rightarrow T^{\ast}\M$, can be used to construct an example of a generalized complex structure. Similarly, so does a complex structure $J: T\M \rightarrow T\M$. 

In the graded setting, a graded Courant algebroid $\E$ can be equipped with a degree $1$ differential $\Delta: \Gamma_{\E}(M) \rightarrow \Gamma_{\E}(M)$ forming a graded derivation of the bracket $[\cdot,\cdot]_{\E}$. It turns out that one has to impose a compatibility with other structures. We obtain a notion of differential graded Courant algebroids, discussed in \S 5. We show how so called ``homological sections'' of $\E$ can be used to produce $\Delta$. We classify all differentials on exact graded Courant algebroids. One can impose the $\Delta$-compatibility of Dirac structures and generalized complex structures. We show that this leads to a generalization of QP-manifolds and differential graded symplectic manifolds. 

Finally, in \S 6, we give a definition of a graded Lie algebroid and show how it induces certain structures on its algebras of forms and multivector fields (we have actually moved the details to the appendix \ref{sec_appendixA}). This allows one to generalize a notion of Lie bialgebroids \cite{mackenzie1994, kosmann1995exact} to the graded setting. It turns out that every graded Lie bialgebroid can be used to produce a graded Courant algebroid, called its double. This is a generalization of the famous result \cite{liu1997manin}. We show how the graded Dorfman bracket can be viewed as a double of a Lie bialgebroid. It is argued how a graded Poisson manifold induces a graded Lie bialgebroid. Finally, graded Lie bialgebroids over a single point lead to the notion of graded Lie bialgebras. We show how this definition can be restated in terms of a Chevalley-Eilenberg cohomology of graded Lie algebras and give a non-trivial example of such structure. 
\section{Elements of graded geometry}
In this section, we shall very briefly recall notions of graded geometry required in this paper. For a detailed exposition and examples, we refer the reader to \cite{vysoky2021global}. 

In general, we use the following somewhat unusual approach to graded algebra. Each graded object, say a \textbf{graded vector space} $V$, is always a sequence $V = \{ V_{j} \}_{j \in \Z}$, where $V_{j}$ is an ordinary vector space for each $j \in \Z$. If there a unique $j \in \Z$, such that $v \in V_{j}$, we write simply $v \in V$ and declare $|v| := j$. In particular, we do not consider inhomogeneous elements. Morphisms of graded objects can be described in a similar manner. For example, a \textbf{graded linear map $\varphi: V \rightarrow W$ of degree $|\varphi|$} is a sequence $\varphi = \{ \varphi_{j} \}_{j \in \Z}$, where $\varphi_{j}: V_{j} \rightarrow W_{j + |\varphi|}$ for every $j \in \Z$. We again use a simplified notation $\varphi(v)$ instead of $\varphi_{|v|}(v)$. Graded vector spaces together with graded linear maps of degree zero form a category $\gVect$. By a \textbf{graded dimension} of $V$, we mean a sequence $\gdim(V) := ( \dim(V_{j}))_{j \in \Z}$. We say that $V$ is finite-dimensional, if $\sum_{j \in \Z} \dim(V_{j}) < \infty$. 

For our purposes, the most important category is the one of \textbf{graded commutative associative algebras}, denoted as $\gcAs$, where each object is a graded vector space $A = \{ A_{j} \}_{j \in \Z}$ equipped with an $\R$-bilinear product $\cdot: A \times A \rightarrow A$ having the following properties:
\begin{enumerate}[(i)]
\item It is of degree zero, that is $|a \cdot b| = |a| + |b|$.
\item It is associative, that is $a \cdot (b \cdot c) = (a \cdot b) \cdot c$.
\item There exists a (unique) unit element $1 \in A$ satisfying $a \cdot 1 = 1 \cdot a = a$. One has $|1| = 0$. 
\item It is graded commutative, that is $a \cdot b = (-1)^{|a||b|} b \cdot a$. 
\end{enumerate}
Axioms are assumed to hold for all elements of $A$. Morphisms in the category $\gcAs$ are degree zero graded linear maps preserving products and unit elements. For other examples  appearing throughout this paper, we refer the reader to \S 1 of \cite{vysoky2021global}.

A \textbf{graded manifold} $\M = (M, \C^{\infty}_{\M})$ of a graded dimension $(n_{j})_{j \in \Z}$ is a second countable Hausdorff topological space $M$ together with a sheaf $\C^{\infty}_{\M}$ of graded commutative associative algebras over $M$. Moreover, it has to have the following properties:
\begin{enumerate}[1)]
\item $(M,\C^{\infty}_{\M})$ is a graded locally ringed space, that is each stalk $\C^{\infty}_{\M,m}$ is a local graded ring.
\item There exists a \textbf{graded smooth atlas} $\A = \{ (U_{\alpha}, \varphi_{\alpha}) \}_{\alpha \in I}$, that is $M = \cup_{\alpha \in I} U_{\alpha}$, and for each $\alpha \in I$, $\varphi_{\alpha}: \M|_{U_{\alpha}} \rightarrow \hat{U}_{\alpha}^{(n_{j})}$ is an isomorphism of graded locally ringed spaces. By $\hat{U}_{\alpha}^{(n_{j})}$ we denote a graded domain over $\hat{U}_{\alpha} \subseteq \R^{n_{0}}$ of a graded dimension $(n_{j})_{j \in \Z}$, see the example below. Every such pair $(U,\varphi)$ is called a \textbf{graded chart} for $\M$ over $U$.
\end{enumerate}
Sections of the sheaf $\C^{\infty}_{\M}$ are usually called \textbf{functions on a graded manifold $\M$}. 
\begin{example}
Let $(n_{j})_{j \in \Z}$ be a sequence of non-negative integers, such that $n := \sum_{j \in \Z} n_{j} < \infty$. Let $n_{\ast} = n - n_{0}$ and consider a collection of variables $\{ \xi_{\mu} \}_{\mu = 1}^{n_{\ast}}$ where each of them is assigned a degree $|\xi_{\mu}| \in \Z$. Suppose that $n_{j} = \#\{ \mu \in \{1, \dots, n_{\ast} \} \; | \; |\xi_{\mu}| = j \}$ for each $j \in \Z - \{0\}$. The variables are assumed to commute according to the rule
\begin{equation} \label{eq_commrelation}
\xi_{\mu} \xi_{\nu} = (-1)^{|\xi_{\mu}||\xi_{\nu}|} \xi_{\nu} \xi_{\mu}.
\end{equation}
Let $\hat{U} \in \Op(\R^{n_{0}})$ be an open subset of $\R^{n_{0}}$. Elements of $\C^{\infty}_{(n_{j})}(\hat{U}) \in \gcAs$ are formal power series in $\{ \xi_{\mu} \}_{\mu=1}^{n_{\ast}}$ with coefficients in the algebra $\C^{\infty}_{n_{0}}(\hat{U})$ of smooth functions on $\hat{U}$. In other words, one has $f \in \C^{\infty}_{(n_{j})}(\hat{U})$ of degree $|f|$, if 
\begin{equation} \label{eq_formalpowseries}
f = \sum_{\fp \in \N_{|f|}^{n_{\ast}}} f_{\fp} \xi^{\fp},
\end{equation}
where $N_{|f|}^{n_{\ast}} = \{ \fp = (p_{1}, \dots, p_{n_{\ast}}) \in (\N_{0})^{n_{\ast}} \; | \; \sum_{\mu=1}^{n_{\ast}} p_{\mu} |\xi_{\mu}| = |f| \text{ and } p_{\mu} \in \{0,1\} \text{ for } |\xi_{\mu}| \text{ odd} \}$, $f_{\fp} \in \C^{\infty}_{n_{0}}(\hat{U})$ and $\xi^{\fp} = (\xi_{1})^{p_{1}} \dots (\xi_{n_{\ast}})^{p_{n_{\ast}}}$. We declare $(\xi_{\mu})^{0} = 1$. This is a unique representation of every formal power series of degree $|f|$ due to (\ref{eq_commrelation}) and the fact that $\xi_{\fp} \neq 0$ for each $\fp \in \N^{n_{\ast}}_{|f|}$. 

Let $f,g \in \C^{\infty}_{(n_{j})}(\hat{U})$. Their product $f \cdot g$ is an element of degree $|f| + |g|$ given by the formula
\begin{equation}
f \cdot g := \hspace{-2mm} \sum_{\fp \in \N^{n_{\ast}}_{|f|+|g|}} \hspace{-2mm} (f \cdot g)_{\fp} \xi^{\fp}, \text{ where } (f \cdot g)_{\fp} := \sum_{\substack{\fq \in \N^{n_{\ast}}_{|f|} \\ \fq \leq \fp}} \epsilon^{\fq,\fp-\fq} f_{\fq} g_{\fp-\fq},
\end{equation}
and $\epsilon^{\fq,\fp-\fq} \in \{-1,1\}$ is the unique sign obtained by reordering $\xi^{\fq} \xi^{\fp-\fq}$ into $\xi^{\fp}$ in accordance with (\ref{eq_commrelation}). In other words, up to signs, $f \cdot g$ is a usual product of formal power series. It is not difficult to see that this makes $\C^{\infty}_{(n_{j})}(\hat{U})$ into a graded commutative associative algebra (with the unit). This algebra can be defined in a more abstract way, see \S 1.3 of \cite{vysoky2021global}.

Now, let $\hat{U} \in \Op(\R^{n_{0}})$ be fixed. For each $\hat{V} \in \Op(\hat{U})$, the assignment 
\begin{equation}
\hat{V} \mapsto \C^{\infty}_{(n_{j})}(\hat{V})
\end{equation}
 makes $\C^{\infty}_{(n_{j})}$ into a sheaf on $\hat{U}$ valued in $\gcAs$. Sheaf restrictions are given by restricting the coefficient functions in the expansion (\ref{eq_formalpowseries}). In fact, $\hat{U}^{(n_{j})} := (\hat{U}, \C^{\infty}_{(n_{j})})$ becomes a graded locally ringed space called a \textbf{graded domain of a graded dimension $(n_{j})_{j \in \Z}$}. 
\end{example}
Let $\M = (M,\C^{\infty}_{\M})$ and $\cN = (N, \C^{\infty}_{\cN})$ be a pair of graded manifolds. We say that $\phi: \M \rightarrow \cN$ is a \textbf{graded smooth map}, if $\phi = (\ul{\phi}, \phi^{\ast})$ for a continuous map $\ul{\phi}: M \rightarrow N$ and a sheaf morphism $\phi^{\ast}: \C^{\infty}_{\cN} \rightarrow \ul{\phi}_{\ast} \C^{\infty}_{\M}$. In other words, for each $V \in \Op(N)$, we obtain a graded algebra morphism $\phi^{\ast}_{V}: \C^{\infty}_{\cN}(V) \rightarrow \C^{\infty}_{\M}( \ul{\phi}^{-1}(V))$ natural in $V$. Finally, the map $[g]_{\ul{\phi}(m)} \mapsto [\varphi^{\ast}_{V}(g)]_{m}$ must define a morphism of local graded rings for all $m \in M$, $V \in \Op_{\ul{\phi}(m)}(N)$ and $g \in \C^{\infty}_{\cN}(V)$. 

Hence in the above definition, $\varphi_{\alpha}: \M|_{U_{\alpha}} \rightarrow \hat{U}^{(n_{j})}_{\alpha}$ becomes a graded smooth map. It is easy to see that $\ul{\A} = \{ (U_{\alpha}, \ul{\varphi_{\alpha}}) \}_{\alpha \in I}$ defines an ordinary smooth atlas on $M$, hence making it into an $n_{0}$-dimensional smooth manifold. It then follows that for any graded smooth map $\phi: \M \rightarrow \cN$, the underlying continuous map $\ul{\phi}: M \rightarrow N$ is smooth. 

One can view $M$ as a (trivially) graded manifold. There exists a canonical graded smooth map $i: M \rightarrow \M$ over $\1_{M}: M \rightarrow M$. For each $U \in \Op(M)$ and $f \in \C^{\infty}_{\M}(M)$, the ordinary smooth function $\ul{f} := i^{\ast}_{U}(f) \in \C^{\infty}_{M}(U)$ is called the \textbf{body of $f$}. For any $m \in U$, we then write $f(m) := \ul{f}(m)$. Note that $\ul{f} = 0$ whenever $|f| \neq 0$.  

For $\M = \hat{U}^{(n_{j})}$, $\hat{V} \in \Op(\hat{U})$ and $f \in \C^{\infty}_{(n_{j})}(\hat{V})$ in the form (\ref{eq_formalpowseries}) with $|f| = 0$, one has $\ul{f} := f_{\mathbf{0}}$, where $\mathbf{0} := (0,\dots,0)$. For general $\M$, $\ul{f}$ is defined precisely to have this form when composed with graded charts and one only has to verify that this makes sense globally. 

Vector fields can be defined as sections of the sheaf of graded derivations of $\C^{\infty}_{\M}$. In other words, for each $U \in \Op(M)$ define
\begin{equation}
\X_{\M}(U) := \gDer( \C^{\infty}_{\M}(U)).
\end{equation}
We say that $X \in \gDer(\C^{\infty}_{\M}(U))$ is a \textbf{vector field of degree $|X|$}. By definition, it is a graded linear map $X: \C^{\infty}_{\M}(U) \rightarrow \C^{\infty}_{\M}(U)$ of degree $|X|$ and for all $f,g \in \C^{\infty}_{\M}(U)$, one has 
\begin{equation}
X(fg) = X(f)g + (-1)^{|X||f|} f X(g). 
\end{equation}
$\X_{\M}(U)$ has a natural structure of a graded $\C^{\infty}_{\M}(U)$-module given by $(f \tr X)(g) := f X(g)$. There is a unique way how to make the assignment $U \mapsto \X_{\M}(U)$ into a sheaf of graded $\C^{\infty}_{\M}$-modules, such that for all $V \subseteq U \subseteq M$, $X \in \X_{\M}(U)$ and $f \in \C^{\infty}_{\M}(U)$, one has
\begin{equation}
X|_{V}(f|_{V}) = X(f)|_{V}. 
\end{equation}
Similarly to ordinary differential geometry, one utilizes the partitions of unity to define $X|_{V}$. Moreover, recall that $\X_{\M}(M)$ has a natural structure of a graded Lie algebra (of degree zero) provided by the graded commutator, defined as 
\begin{equation} \label{eq_gcommutator}
[X,Y](f) := X(Y(f)) - (-1)^{|X||Y|} Y(X(f)),
\end{equation}
for all $X,Y \in \X_{\M}(M)$ and $f \in \C^{\infty}_{\M}(M)$. 
\begin{definice}[\textbf{Graded vector bundles over graded manifolds}]
Let $\M = (M,\C^{\infty}_{\M})$ be a graded manifold. Suppose we are given a sheaf of graded $\C^{\infty}_{\M}$-modules $\Gamma_{\E}$ that is locally freely and finitely generated of a finite graded rank $(r_{j})_{j \in \Z}$. In other words, $\Gamma_{\E}$ is a sheaf of graded vector spaces having the following properties:
\begin{enumerate}[(i)]
\item For each $U \in \Op(M)$, $\Gamma_{\E}(U)$ is a graded $\C^{\infty}_{\M}(U)$-module, and restrictions are compatible with the action of $\C^{\infty}_{\M}$.
\item $r := \sum_{j \in \Z} r_{j} < \infty$, and for any $m \in M$, there exists $U \in \Op_{m}(M)$ and a collection $\{ \Phi_{\lambda} \}_{\lambda = 1}^{r}$ of sections in $\Gamma_{\E}(U)$, such that $r_{j} = \# \{ \lambda \in \{1,\dots,r\} \; | \; |\Phi_{\lambda}| = j \}$ for each $j \in \Z$. Moreover, for each $V \in \Op(U)$, every $\psi \in \Gamma_{\E}(V)$ can be uniquely decomposed as $\psi = \sum_{\lambda=1}^{r} f^{\lambda} \Phi_{\lambda}|_{V}$ for $f^{\lambda} \in \C^{\infty}_{\M}(V)$ satisfying $|f^{\lambda}| + |\Phi_{\lambda}| = |\psi|$. 
\end{enumerate}
We say that $\Gamma_{\E}$ is a sheaf of sections of a \textbf{graded vector bundle} $\E$ over $\M$. Its \textbf{graded rank} is defined as $\grk(\E) := (r_{j})_{j \in \Z}$. $\{ \Phi_{\lambda} \}_{\lambda = 1}^{r}$ is called a \textbf{local frame for $\E$ over $U$}. 
\end{definice}
\begin{rem} \label{rem_noSerreswan}
Since we define graded vector bundles rather algebraically, one may be tempted to define graded vector bundles as finitely generated projective graded $\C^{\infty}_{\M}(M)$-modules. However, although $\Gamma_{\E}(M)$ is always a finitely generated projective graded $\C^{\infty}_{\M}(M)$-module, the converse is not true in the graded (or super) setting.
\end{rem}
\begin{example}
Let $K \in \gVect$ be a finite-dimensional graded vector space. For each $U \in \Op(M)$, let $\Gamma_{\M \times K}(U) := \C^{\infty}_{\M}(U) \otimes_{\R} K$. Suppose that $(r_{j})_{j \in \Z} := \gdim(K)$ and let $r := \sum_{j \in \Z} r_{j}$. Let $(\vartheta_{\lambda})_{\lambda=1}^{r}$ be a total basis for $K$. By definition $\# \{ \lambda \in \{1,\dots,r\} \; | \; |\vartheta_{\lambda}| = j \} = r_{j}$ for each $j \in \Z$. It follows that $\{ 1 \otimes \vartheta_{\lambda} \}_{\lambda =1}^{r}$ forms a local frame for $\M \times K$ over $M$. Hence $\M \times K$ forms a graded vector bundle, called a \textbf{trivial vector bundle}.

Note that every local frame for a graded vector bundle $\E$ over $U$ is equivalent to the $\C^{\infty}_{\M}|_{U}$-linear sheaf isomorphism $\Gamma_{\E}|_{U} \cong \Gamma_{\M \times \R^{(r_{j})}}|_{U}$. Recall that $(\R^{(r_{j})})_{k} := \R^{r_{k}}$ for all $k \in \Z$. 
\end{example}
Let $(n_{j})_{j \in \Z}$ be a graded dimension of $\M$ and suppose $\E$ is a graded vector bundle over $\M$ of a graded rank $(r_{j})_{j \in \Z}$. One can show that there is a unique (up to a graded diffeomorphism) graded manifold $\E = (E, \C^{\infty}_{\E})$ of a graded dimension $(n_{j} + r_{-j})_{j \in \Z}$ together with a graded smooth map $\pi: \E \rightarrow \M$. $\E$ is called the \textbf{total space} of a graded vector bundle. One can recover the sheaf $\Gamma_{\E}$ from its sheaf of functions $\C^{\infty}_{\E}$. However, we will rarely actually use this graded manifold explicitly. Note that $\ul{\pi}: E \rightarrow M$ is an ordinary vector bundle of rank $r_{0}$. 

\begin{example} \label{ex_tangent}
Let $\Gamma_{T\M} := \X_{\M}$ and suppose $(n_{j})_{j \in \Z} = \gdim(\M)$. 

Let $(U,\varphi)$ be a graded local chart for $\M$ over $U$, that is $\varphi: \M|_{U} \rightarrow \hat{U}^{(n_{j})}$ is a graded diffeomorphism. If $n = \sum_{j \in \Z} n_{j}$ and $n_{\ast} = n - n_{0}$, we have coordinate functions $\{ x^{i} \}_{i=1}^{n_{0}} \subseteq \C^{\infty}_{(n_{j})}(\hat{U})$ of degree $0$ corresponding to standard coordinates in $\R^{n_{0}}$, and ``purely graded'' coordinate functions $\{ \xi_{\mu} \}_{\mu=1}^{n_{\ast}} \subseteq \C^{\infty}_{(n_{j})}(\hat{U})$. It is sometimes convenient to denote them altogether as $\{ \bbz^{A} \}_{A=1}^{n}$. It is customary to use the same symbols to denote the corresponding functions $\bbz^{A} := \varphi^{\ast}_{\hat{U}}(\bbz^{A})$ in $\C^{\infty}_{\M}(U)$. 

It follows that for each $A \in \{1,\dots, n\}$, there is a vector field $\frac{\partial}{\partial \bbz^{A}} \in \X_{\M}(U)$ of degree $-|\bbz^{A}|$ determined uniquely by the formula
\begin{equation}
\frac{\partial}{\partial \bbz^{A}}(\bbz^{B}) := \delta_{A}^{B},
\end{equation}
for every $B \in \{1,\dots,n\}$. One can show that $\{ \frac{\partial}{\partial \bbz^{A}} \}_{A=1}^{n}$ forms a local frame for $T\M$ over $U$, thus making $T\M$ into a graded vector bundle of a graded rank $(n_{-j})_{j \in \Z}$. See \S 4.2 in \cite{vysoky2021global} for details. $T\M$ is called the \textbf{tangent bundle} of $\M$.
\end{example} 
\begin{example} \label{ex_dual}
Let $\E$ be a graded vector bundle over $\M$ of a graded rank $(r_{j})_{j \in \Z}$. Let $\Gamma_{\E}$ be its sheaf of sections. For each $U \in \Op(M)$, let $\Gamma_{\E^{\ast}}(U)$ be a graded vector space of $\C^{\infty}_{\M}(U)$-linear maps from $\Gamma_{\E}(U)$ to $\C^{\infty}_{\M}(U)$, that is $\alpha \in \Gamma_{\E^{\ast}}(U)$, if $\alpha: \Gamma_{\E}(U) \rightarrow \C^{\infty}_{\M}(U)$ is $\R$-linear of degree $|\alpha|$, and 
\begin{equation}
\alpha(f \psi) = (-1)^{|\alpha||f|} f \alpha(\psi),
\end{equation} 
for all $\psi \in \Gamma_{\E}(U)$ and $f \in \C^{\infty}_{\M}(U)$. There is an obvious graded $\C^{\infty}_{\M}(U)$-module structure on $\Gamma_{\E^{\ast}}(U)$ and one can use partitions of unity to make $\Gamma_{\E^{\ast}}$ into a sheaf of $\C^{\infty}_{\M}$-modules. If $\{ \Phi_{\lambda}\}_{\lambda=1}^{r}$ is a local frame for $\E$ over $U$, one can consider $\Phi^{\lambda} \in \Gamma_{\E^{\ast}}(U)$ determined uniquely by the requirement $\Phi^{\lambda}(\Phi_{\kappa}) := \delta^{\lambda}_{\kappa}$ for each $\kappa \in \{1,\dots,r\}$. It follows that $\{ \Phi^{\lambda} \}_{\lambda =1}^{r}$ is a local frame for $\E^{\ast}$ over $U$, making $\Gamma_{\E^{\ast}}$ into a sheaf of sections of the \textbf{graded vector bundle $\E^{\ast}$ dual to $\E$}. Note that $\grk(\E^{\ast}) = (r_{-j})_{j \in \Z}$. 
\end{example}

\begin{example} \label{ex_degreeshift}
Let $\E$ be a graded vector bundle over $\M$ of a graded rank $(r_{j})_{j \in \Z}$. Let $\Gamma_{\E}$ be its sheaf of sections. Let $\ell \in \Z$. For each $U \in \Op(M)$, consider a graded vector space $\Gamma_{\E[\ell]}(U) := \Gamma_{\E}(U)[\ell]$. For each $j \in \Z$, we thus have $(\Gamma_{\E}(U)[\ell])_{j} := (\Gamma_{\E}(U))_{j + \ell}$. 

A graded $\C^{\infty}_{\M}(U)$-module structure on $\Gamma_{\E[\ell]}(U)$ is for each $\psi \in \Gamma_{\E[\ell]}(U)$ and $f \in \C^{\infty}_{\M}(U)$ given by $f \tr' \psi := (-1)^{|f|\ell} f \psi$. On the right-hand side, there is the original action on $\Gamma_{\E}(U)$. It is obvious that every local frame $(\Phi_{\lambda})_{\lambda=1}^{r}$ for $\E$ over $U$ can be viewed as a local frame for $\E[\ell]$ over $U$, whence $\Gamma_{\E[\ell]}$ becomes a sheaf of sections of a \textbf{degree $\ell$ shift $\E[\ell]$} of $\E$. Note that $\grk(\E[\ell]) = (r_{j+\ell})_{j \in \Z}$.
\end{example}

\begin{definice}
Let $\E$ and $\E'$ be two graded vector bundles over $\M$. By a \textbf{vector bundle map $F: \E \rightarrow \E'$ of degree $|F|$}, we mean a $\C^{\infty}_{\M}(M)$-linear map $F: \Gamma_{\E}(M) \rightarrow \Gamma_{\E'}(M)$ of degree $|F|$. In other words, it satisfies
\begin{enumerate}
\item $|F(\psi)| = |F| + |\psi|$ for all $\psi \in \Gamma_{\E}(M)$;
\item for each $\psi \in \Gamma_{\E}(M)$ and $f \in \C^{\infty}_{\M}(M)$, one has $F(f \psi) = (-1)^{|f||F|} f F(\psi)$. 
\end{enumerate}
If the degree is not specified, we assume $|F| = 0$. 
\end{definice}
\begin{rem}
A degree $|F|$ vector bundle map $F: \E \rightarrow \E'$ can be equivalently viewed as a degree $0$ vector bundle map $F: \E \rightarrow \E'[|F|]$ or $F: \E[-|F|] \rightarrow \E'$. 
\end{rem}

\begin{rem} \label{rem_VBmorphisms}
In this paper, we consider only vector bundle maps over the identity $\1_{\M}: \M \rightarrow \M$ for two graded vector bundles over the same graded manifold. One can show that $F$ can be uniquely extended to a $\C^{\infty}_{\M}$-linear sheaf morphism $F: \Gamma_{\E} \rightarrow \Gamma_{\E'}$. Note that there is a definition of vector bundle maps for any two graded vector bundles, formulated using their respective duals, see \S 5.1 of \cite{vysoky2021global}. 
\end{rem}

In graded geometry, there is a convenient way to introduce the exterior algebra sheaf $\Omega_{\M}$ as a sheaf of functions on the graded manifold $T[1+s]\M$. Here $s$ is a large enough non-negative even integer, and $T[1+s]\M$ is the total space corresponding to the degree shift of the tangent bundle $T\M$, see Example \ref{ex_tangent} and Example \ref{ex_degreeshift}. For an explanation of the strange additional degree shift $s$, see \S 6 of \cite{vysoky2021global}. 

However, in this paper we will use an equivalent more algebraic definition. We define $\Omega^{p}_{\M} := \Omega^{p}_{[T\M]}$, using the general framework discussed in detail in Appendix \ref{sec_appendixA} for a degree zero Lie algebroid $(T\M, \1_{T\M}, [\cdot,\cdot])$. See Example \ref{ex_standardgLA}. 

The sheaves $\X^{p}_{\M}$ and $\~\X^{p}_{\M}$ of skew-symmetric and symmetric $p$-vector fields are defined similarly. We will use the symbol $[\cdot,\cdot]_{S}$ to denote the Schouten-Nijenhuis brackets induced on the multivector fields (of both kinds).

\begin{example}[\textbf{Graded Poisson manifolds}] \label{ex_gPoisson} Let $\M$ be a graded manifold. By a \textbf{Poisson bracket of degree $\ell$} on $\M$, we mean an $\R$-bilinear bracket $\{ \cdot,\cdot\}: \C^{\infty}_{\M}(M) \times \C^{\infty}_{\M}(M) \rightarrow \C^{\infty}_{\M}(M)$ having the following properties:
\begin{enumerate}[(i)]
\item $|\{f,g\}| = |f| + |g| + \ell$ for all $f,g \in \C^{\infty}_{\M}(M)$;
\item \label{eq_PB_gsymmetry} $\{f,g\} = - (-1)^{(|f|+\ell)(|g|+\ell)} \{g,f\}$; 
\item \label{eq_PB_Leibniz} $\{f, gh \} = \{f,g\}h + (-1)^{(|f|+\ell)|g|} g \{f,h\}$; 
\item \label{eq_PB-gJacobi} $\{f, \{g,h\}\} = \{\{f,g\}, h\} + (-1)^{(|f|+\ell)(|g|+\ell)} \{g, \{f,h\}\}$. 
\end{enumerate}
$(\M,\{\cdot,\cdot\})$ is called a \textbf{graded Poisson manifold of degree $\ell$}. This is an example of a graded Poisson algebra \cite{cattaneo2018graded} of degree $\ell$.

 As in ordinary Poisson geometry, there should be a skew-symmetric bivector $\Pi \in \X^{2}_{\M}(M)$ of degree $\ell$, such that $\{\cdot,\cdot\}$ is obtained as a \textbf{derived bracket} from $\Pi$ using $[\cdot,\cdot]_{S}$, that is 
\begin{equation} \label{eq_PBasderived}
\{f,g\} = [[f,\Pi]_{S},g]_{S}. 
\end{equation}
However, using the properties (\ref{eq_LASNB_gsymmetry}, \ref{eq_LASNB_gJacobi}) one finds
\begin{equation}
[[f,\Pi]_{S},g]_{S} = [f, [\Pi,g]_{S}]_{S} = - (-1)^{(|f|+\ell)(|g|+\ell) + \ell} [[g,\Pi]_{S},f]_{S} 
\end{equation}
We see that for odd $\ell$, the formula (\ref{eq_PBasderived}) does not define a graded Poisson bracket satisfying (\ref{eq_PB_gsymmetry}) of the above definition. This cannot be fixed by different sign choices in (\ref{eq_PBasderived}). Hence $\{\cdot,\cdot\}$ corresponds to a skew-symmetric bivector $\Pi \in \X^{2}_{\M}(M)$ \textit{only for even $\ell$}. 

It turns out that odd $\ell$, we have to assume that $\Pi \in \~\X^{2}_{\M}(M)$. Indeed, one uses (\ref{eq_LASNB2_gsymmetry}, \ref{eq_LASNB2_gJacobi}) to find
\begin{equation}
[[f,\Pi]_{S},g]_{S} = [f, [\Pi,g]_{S}]_{S} = (-1)^{(|f|+\ell)(|g|+\ell) + \ell} [[g,\Pi]_{S},f]_{S}.
\end{equation} 
This gives a correct sign in (\ref{eq_PB_gsymmetry}) when $\ell$ is odd. 

In both cases, (\ref{eq_PB_Leibniz}) follows from the Leibniz rule for $[\cdot,\cdot]_{S}$, whereas (\ref{eq_PB-gJacobi}) is equivalent to the condition $[\Pi,\Pi]_{S} = 0$. Note that if $\Pi \in \X^{2}_{\M}(M)$ has an odd degree, the condition $[\Pi,\Pi]_{S} = 0$ is trivial. The same happens for $\Pi \in \~\X^{2}_{\M}(M)$ of even degree. 
\end{example}
\begin{rem}
In fact, the bracket $[\cdot,\cdot]_{S}$ for for skew-symmetric multivectors can be viewed as a graded Poisson bracket on $T^{\ast}[1+s']\M$ of degree $-(1 + s')$. For symmetric multivectors, it defines a graded Poisson bracket on $T^{\ast}[s']\M$ of degree $-s'$. 
\end{rem}
\begin{rem}
For all $p \in \N_{0}$, $\Omega^{p}_{\M}$, $\X^{p}_{\M}$ and $\~\X^{p}_{\M}$ form locally freely and finitely generated sheaves of graded $\C^{\infty}_{\M}$-modules, hence they can be viewed as sheaves of sections of graded vector bundles. 
\end{rem}
\section{Graded Courant algebroids}
In this section, we shall generalize the concept of a Courant algebroid \cite{liu1997manin, 1999math.....10078R, Severa:2017oew}. We will then provide a key example generalizing the well-known Dorfman bracket \cite{dorfman1987dirac} of ordinary generalized geometry. 
\begin{definice}
Let $\E$ be a graded vector bundle. By a \textbf{graded symmetric bilinear form on $\E$ of degree $\ell$}, we mean a vector bundle map $g_{\E}: \E \rightarrow \E^{\ast}$ of degree $\ell$, such that $\<\cdot,\cdot\>_{E}$, defined for all $\psi,\psi' \in \Gamma_{\E}(M)$ by the formula
\begin{equation} \label{eq_gEformErelation}
\<\psi,\psi'\>_{\E} := (-1)^{(|\psi|+\ell)\ell} [g_{\E}(\psi)](\psi'),
\end{equation}
is graded symmetric of degree $\ell$ in its inputs, that is
\begin{equation} \label{eq_gradedsymmetricform}
\< \psi,\psi'\>_{\E} = (-1)^{(|\psi|+\ell)(|\psi'|+\ell)} \<\psi',\psi\>_{\E},
\end{equation}
for all $\psi,\psi' \in \Gamma_{\E}(M)$. We say that $g_{\E}$ is a \textbf{fiber-wise metric on $\E$ of degree $\ell$}, if it is a graded vector bundle isomorphism. In this case, we say that $(\E,g_{\E})$ is a \textbf{quadratic graded vector bundle of degree $\ell$}.
\end{definice}
Let $\psi \in \Gamma_{\E}(M)$. Since $g_{\E}(\psi) \in \Gamma_{\E^{\ast}}(M)$ is $\C^{\infty}_{\M}(M)$-linear of degree $|\psi| + \ell$ , we find that for all $\psi' \in \Gamma_{\E}(M)$ and $f \in \C^{\infty}_{\M}(M)$, one has the rule
\begin{equation} \label{eq_metriclinearsecond}
\<\psi, f \psi'\>_{\E} = (-1)^{(|\psi|+\ell)|f|} f \<\psi,\psi'\>_{\E}. 
\end{equation}
On the other hand, as $g_{\E}: \Gamma_{\E}(M) \rightarrow \Gamma_{\E^{\ast}}(M)$ is $\C^{\infty}_{\M}(M)$-linear of degree $\ell$, one gets
\begin{equation} \label{eq_metriclinearfirst}
\<f \psi, \psi'\>_{\E} = f \<\psi, \psi'\>_{\E}. 
\end{equation}
Note that this rule is consistent with (\ref{eq_gradedsymmetricform}) and (\ref{eq_metriclinearsecond}). This also explains the signs in (\ref{eq_gEformErelation}). 
\begin{rem} \label{rem_nondegen}
Since $g_{\E}$ can be equivalently viewed as a vector bundle map $g_{\E}: \E \rightarrow \E^{\ast}[\ell]$ of degree zero, it can be an isomorphism, only if $\grk(\E) = \grk(\E^{\ast}[\ell])$. It thus follows from Example \ref{ex_dual} and Example \ref{ex_degreeshift} that a fiber-wise metric on $\E$ of degree $\ell$ exists, only if $\grk(\E) = (r_{j})_{j \in \Z}$ satisfies the condition $r_{j} = r_{-(j+\ell)}$ for every $j \in \Z$. 
\end{rem}
\begin{example} \label{ex_canpairing}
Let $\E = T[\ell]\M \oplus T^{\ast}\M$. Each section in $\Gamma_{\E}(M)$ of degree $k \in \Z$ is a pair $(X,\xi)$, where $X \in \X_{\M}(M)$ is a vector field of degree $|X| = k + \ell$ and $\xi \in \Omega_{\M}(M)$ is a $1$-form of degree $|\xi| = k$. In other words, we have $|(X,\xi)| = |X| - \ell = |\xi|$. The dual vector bundle $\E^{\ast}$ can be canonically identified with $T^{\ast}[-\ell]\M \oplus T\M$. One has to be a bit careful when doing this. Indeed, suppose $(\xi,X) \in \Omega^{1}_{\M}[-\ell](M) \oplus \X_{\M}(M)$. This means that $\xi \in \Omega^{1}_{\M}(M)$ is a $1$-form of degree $|\xi| = |(\xi,X)| - \ell$ and $X \in \X_{\M}(M)$ is a vector field of degree $|X| = |(\xi,X)|$. If we want to view $(\xi,X)$ as an element of $\Gamma_{\E^{\ast}}(M)$, we must declare its action on every element $(Y,\eta) \in \Gamma_{\E}(M)$. Set
\begin{equation} \label{eq_dualgentangentident}
[(\xi,X)](Y,\eta) := (-1)^{(|\xi|+\ell)\ell} \xi(Y) + (-1)^{|X||\eta|} \eta(X).
\end{equation}
Note that $|(Y,\eta)| = |Y| - \ell = |\eta|$. This must define a $\C^{\infty}_{\M}(M)$-linear map of degree $|(\xi,X)| = |\xi| + \ell = |X|$. We leave the verification to the reader - be aware that $\C^{\infty}_{\M}(M)$-actions are also modified by degree shifts! Note that the identification of $(\xi,X)$ with an element of $\Gamma_{\E^{\ast}}(M)$ is  $\C^{\infty}_{\M}(M)$-linear of degree zero, hence it defines a graded vector bundle isomorphism. We will henceforth use this identification.

Now, let us define $g_{\E}: \E \rightarrow \E^{\ast}$ as follows. For each $(X,\xi) \in \Gamma_{\E}(M)$, set 
\begin{equation}
g_{\E}(X,\xi) := (\xi,X). 
\end{equation}
First, observe that $|g_{\E}(X,\xi)| = |(\xi,X)| = |X| = |(X,\xi)| + \ell$, and for each $f \in \C^{\infty}_{\M}(M)$, one has 
\begin{equation}
\begin{split}
g_{\E}(f \tr (X,\xi)) = & \ g_{\E}(  (-1)^{|f|\ell}f X, f \xi) = (f \xi, (-1)^{|f|\ell} f X) \\
= & \ (-1)^{|f|\ell} ((-1)^{-|f|\ell} f \xi, fX) = (-1)^{|f|\ell} f \tr (\xi, X) \\
= & \ (-1)^{|f|\ell} f \tr g_{\E}(X,\xi). 
\end{split}
\end{equation}
This proves that $g_{\E}$ is a $\C^{\infty}_{\M}(M)$-linear isomorphism of degree $\ell$. Using (\ref{eq_dualgentangentident}), the formula (\ref{eq_gEformErelation}) gives
\begin{equation} \label{eq_canpairing}
\<(X,\xi), (Y,\eta) \>_{\E} = \xi(Y) + (-1)^{|X||Y|} \eta(X),
\end{equation}
for all $(X,\xi), (Y,\eta) \in \Gamma_{\E}(M)$. It is easy to see that $\<\cdot,\cdot\>_{\E}$ satisfies (\ref{eq_gradedsymmetricform}). We conclude that $g_{\E}$ defines a fiber-wise metric of degree $\ell$ on $\E$. 
\end{example} 
We can now proceed to the main definition of this section. Let $\ell \in \Z$ be any integer. 
\begin{definice} \label{def_gCourant}
$(\E,\rho,g_{\E},[\cdot,\cdot]_{\E})$ is called a \textbf{graded Courant algebroid of degree $\ell$}, if 
\begin{enumerate}[(i)]
\item $\E$ is a graded vector bundle over $\M$;
\item $\rho: \E \rightarrow T\M$ is a vector bundle map of degree $\ell$ called the \textbf{anchor};
\item $g_{\E}$ is a fiber-wise metric on $\E$ of degree $\ell$; 
\item $[\cdot,\cdot]_{\E}: \Gamma_{\E}(M) \times \Gamma_{\E}(M) \rightarrow \Gamma_{\E}(M)$ is an $\R$-bilinear map of degree $\ell$, that is $|[\psi,\psi']_{\E}| = |\psi| + |\psi'| + \ell$ for all $\psi,\psi' \in \Gamma_{\E}(M)$. 
\end{enumerate}
Moreover, the operations are subject to the following set of axioms. All conditions have to hold for all involved sections and functions. Let $\hat{\rho}(\psi) := (-1)^{(|\psi|+\ell)\ell} \rho(\psi)$. 
\begin{enumerate}[(c1)]
\item The fiber-wise metric and the bracket are compatible, that is 
\begin{equation} \label{eq_CApairingcomp}
\hat{\rho}(\psi)\<\psi',\psi''\>_{\E} = \< [\psi,\psi']_{\E}, \psi''\>_{\E} + (-1)^{(|\psi|+\ell)(|\psi'|+\ell)} \< \psi', [\psi,\psi'']_{\E} \>_{\E}.
\end{equation}
\item The bracket is graded skew-symmetric up to an extra term, that is 
\begin{equation} \label{eq_CAskewsym}
[\psi,\psi']_{\E} + (-1)^{(|\psi|+\ell)(|\psi'|+\ell)} [\psi',\psi]_{\E} = (-1)^{|\psi|+|\psi'|} \dD \<\psi,\psi'\>_{\E},
\end{equation}
where $\dD: \C^{\infty}_{\M}(M) \rightarrow \Gamma_{\E}(M)$ is a degree zero $\R$-linear map defined as $\dD := g_{\E}^{-1} \circ \rho^{T} \circ \dr$.
\item The bracket satisfies the \textbf{graded Jacobi identity}
\begin{equation} \label{eq_CAgJI}
[\psi,[\psi',\psi'']_{\E}]_{\E} = [[\psi,\psi']_{\E}, \psi'']_{\E} + (-1)^{(|\psi|+\ell)(|\psi'|+\ell)} [\psi', [\psi, \psi'']_{\E}]_{\E}.
\end{equation}
\end{enumerate}
\end{definice}

Similarly to ordinary Courant algebroids, one can find some immediate consequences of the axioms. Let us name a few.
\begin{tvrz} \label{tvrz_CAconsequences}
The bracket satisfies the \textbf{graded Leibniz rule} 
\begin{equation} \label{eq_CALeibniz}
[\psi, f\psi']_{\E} = \hat{\rho}(\psi)(f) \psi' + (-1)^{|f|(|\psi|+\ell)} f [\psi,\psi']_{E},
\end{equation}
for all $\psi,\psi' \in \Gamma_{\E}(M)$ and $f \in \C^{\infty}_{\M}(M)$. The anchor $\rho$ is a bracket homomorphism, that is 
\begin{equation} \label{eq_rhoishom}
\rho( [\psi,\psi']_{\E}) = [\rho(\psi), \rho(\psi')],
\end{equation}
for all $\psi,\psi' \in \Gamma_{\E}(M)$. 
One has $\rho \circ \dD = 0$ and thus $\rho \circ \rho^{\ast} = 0$, where $\rho^{\ast} := g_{\E}^{-1} \circ \rho^{T}$. Consequently, there is a sequence of graded vector bundles over $\M$ and degree zero vector bundle maps over $\1_{\M}$ 
\begin{equation}
\begin{tikzcd} \label{eq_gCAsequence}
0 \arrow{r} & T^{\ast}\M \arrow{r}{\rho^{\ast}} & \arrow{r}{\rho} \E & T[\ell]\M \arrow{r} & 0 
\end{tikzcd}
\end{equation}
forming a cochain complex. Finally, for any $f \in \C^{\infty}_{\M}(M)$ and $\psi \in \Gamma_{\E}(M)$, one has 
\begin{equation} \label{eq_bracketswithD}
[\psi, \dD{f}]_{\E} = (-1)^{|f|+\ell} \dD \<\psi, \dD{f}\>_{\E}, \; \; [\dD{f}, \psi]_{\E} = 0. 
\end{equation}
\end{tvrz}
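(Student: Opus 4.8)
The plan is to derive all four assertions from the three axioms (c1)--(c3) together with the non-degeneracy of $g_{\E}$; the only auxiliary inputs are the $\C^{\infty}_{\M}(M)$-bilinearity rules \eqref{eq_metriclinearfirst}, \eqref{eq_metriclinearsecond} for the pairing and the graded Leibniz rule for the derivation $\hat{\rho}(\psi)$ acting on products of functions. For the \emph{graded Leibniz rule} \eqref{eq_CALeibniz} I would first pair the unknown $[\psi,f\psi']_{\E}$ against an arbitrary section $\psi''$. Applying (c1) to the triple $(\psi,f\psi',\psi'')$ and expanding the left-hand side $\hat{\rho}(\psi)\<f\psi',\psi''\>_{\E}=\hat{\rho}(\psi)\big(f\<\psi',\psi''\>_{\E}\big)$ via the derivation property of the degree-$(|\psi|+\ell)$ field $\hat{\rho}(\psi)$ yields a term $\hat{\rho}(\psi)(f)\<\psi',\psi''\>_{\E}$ and a term $(-1)^{|f|(|\psi|+\ell)}f\,\hat{\rho}(\psi)\<\psi',\psi''\>_{\E}$; re-applying (c1) to the latter and using \eqref{eq_metriclinearfirst} to pull $f$ out of the pairings, the terms containing $[\psi,\psi'']_{\E}$ on the two sides coincide and cancel. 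What remains is $\<[\psi,f\psi']_{\E},\psi''\>_{\E}=\<\hat{\rho}(\psi)(f)\psi'+(-1)^{|f|(|\psi|+\ell)}f[\psi,\psi']_{\E},\psi''\>_{\E}$ for every $\psi''$, so \eqref{eq_CALeibniz} follows from non-degeneracy of $g_{\E}$.

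For the \emph{anchor homomorphism} \eqref{eq_rhoishom} I would feed a function into the Jacobi identity: apply (c3) to $(\psi,\psi',f\psi'')$ and expand every bracket with the Leibniz rule just proved (twice on the nested brackets). Every term carrying a factor $f$ reproduces (c3) for $(\psi,\psi',\psi'')$ and drops out; the terms in which $\hat{\rho}$ hits $f$ once carry an undifferentiated $[\psi,\psi'']_{\E}$ or $[\psi',\psi'']_{\E}$ and match across the equation; the surviving contributions are proportional to the bare section $\psi''$ and read $\hat{\rho}([\psi,\psi']_{\E})(f)\,\psi''=\big(\hat{\rho}(\psi)\hat{\rho}(\psi')(f)-(-1)^{(|\psi|+\ell)(|\psi'|+\ell)}\hat{\rho}(\psi')\hat{\rho}(\psi)(f)\big)\psi''$. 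Since $f$ and $\psi''$ are arbitrary and $\hat{\rho}$ differs from $\rho$ only by an invertible sign, comparison with the graded commutator \eqref{eq_gcommutator} gives \eqref{eq_rhoishom}. This second-order comparison is where the bulk of the sign bookkeeping concentrates, and I expect it to be the main obstacle.

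For $\rho\circ\dD=0$ I would apply $\rho$ to the skew-symmetry axiom (c2). By \eqref{eq_rhoishom} the left-hand side becomes $[\rho(\psi),\rho(\psi')]+(-1)^{(|\psi|+\ell)(|\psi'|+\ell)}[\rho(\psi'),\rho(\psi)]$, which vanishes identically by graded antisymmetry of the commutator \eqref{eq_gcommutator} (recall $|\rho(\psi)|=|\psi|+\ell$). Hence $\rho(\dD\<\psi,\psi'\>_{\E})=0$. Non-degeneracy lets one write, locally, every $f\in\C^{\infty}_{\M}(U)$ as $f=\<\psi,\psi'\>_{\E}$ (take frame sections with unit pairing and rescale via \eqref{eq_metriclinearfirst}); since $\rho\circ\dD$ is a local operator, $\rho\circ\dD=0$ globally. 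As $\dD=\rho^{\ast}\circ\dr$ and the coordinate differentials $\{\dr\bbz^{A}\}$ form a local frame for $T^{\ast}\M$, the identity $\rho\circ\rho^{\ast}(\dr\bbz^{A})=\rho\circ\dD(\bbz^{A})=0$ together with $\C^{\infty}_{\M}$-linearity gives $\rho\circ\rho^{\ast}=0$; a degree count shows $\rho^{\ast}\colon T^{\ast}\M\to\E$ and $\rho\colon\E\to T[\ell]\M$ both have degree $0$, so \eqref{eq_gCAsequence} is a cochain complex.

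Finally, the previous step yields $\hat{\rho}(\dD f)=(-1)^{(|f|+\ell)\ell}\rho(\dD f)=0$, and unwinding \eqref{eq_gEformErelation} together with the definitions of $\rho^{T}$ and $\dD$ produces a relation of the form $\<\dD g,\psi'\>_{\E}=\epsilon\,\rho(\psi')(g)$, with an explicit sign $\epsilon$ read off from the reference's transpose convention. To get the first identity in \eqref{eq_bracketswithD} I would pair $[\psi,\dD f]_{\E}$ against an arbitrary $\psi'$ via (c1) on $(\psi,\dD f,\psi')$: both resulting terms reduce, through this relation and \eqref{eq_rhoishom}, to iterated anchor actions of $f$, and the same reduction applied to $\<\dD\<\psi,\dD f\>_{\E},\psi'\>_{\E}$ lets me match the two sides and invoke non-degeneracy. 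Substituting the result into (c2) for the pair $(\psi,\dD f)$ and using graded symmetry of the pairing, the two $\dD$-terms combine to give $[\dD f,\psi]_{\E}=0$. The delicate point here is again purely a matter of signs: one must pin down the transpose convention precisely so that the ``wrong-order'' second-derivative terms $\rho(\psi)\rho(\psi')(f)$ cancel, which, alongside the comparison in the anchor-homomorphism step, is where the real care is required.
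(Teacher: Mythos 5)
Your proposal is correct and follows essentially the same route as the paper: the Leibniz rule is extracted from axiom (c1) applied to $(\psi,f\psi',\psi'')$ together with the $\C^{\infty}_{\M}(M)$-bilinearity of $\<\cdot,\cdot\>_{\E}$ and non-degeneracy of $g_{\E}$; the anchor property comes from feeding $f\psi''$ into the Jacobi identity (c3) and expanding with the Leibniz rule; and both identities in the last display are obtained exactly as in the paper, via $\<\phi,\dD f\>_{\E}=(-1)^{|f|+\ell}\hat{\rho}(\phi)f$, axiom (c1), the anchor property and non-degeneracy, with the second identity then read off from (c2). The only genuine divergence is the step $\rho\circ\dD=0$: the paper first derives the left-Leibniz formula for $[f\psi,\psi']_{\E}$ from (\ref{eq_CALeibniz}) and (\ref{eq_CAskewsym}) and applies $\rho$ to it, whereas you apply $\rho$ directly to (c2) and use the graded antisymmetry of the commutator; your version is slightly more economical but requires the extra (easy, and correctly flagged) observation that every function is locally of the form $\<\psi,\psi'\>_{\E}$, while the paper's version gets non-degeneracy for free from the coefficient $\<\psi,\psi'\>_{\E}\,\dD f$. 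Both arguments are sound, and your caution about pinning down the transpose convention and the resulting sign $\epsilon$ in $\<\dD f,\psi'\>_{\E}=\epsilon\,\rho(\psi')(f)$ is well placed, since that is precisely where the cancellation in $[\dD f,\psi]_{\E}=0$ is decided.
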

\begin{proof}
All proofs are completely analogous to the ones for ordinary Courant algebroids, see e.g. \cite{1999math.....10078R, Kotov:2010wr, kosmann2005quasi}. The equation (\ref{eq_CALeibniz}) is implied by (\ref{eq_CApairingcomp}) being consistent with the $\C^{\infty}_{\M}$-bilinearity of $\<\cdot,\cdot\>_{\E}$. The equation (\ref{eq_rhoishom}) follows easily from (\ref{eq_CAgJI}) together with the graded Leibniz rule (\ref{eq_CALeibniz}). Using (\ref{eq_CALeibniz}) and (\ref{eq_CAskewsym}), one can derive the rule
\begin{equation}
[f\psi,\psi']_{\E} = f[\psi,\psi']_{\E} - (-1)^{(|f|+|\psi|+\ell)(|\psi'|+\ell)} \hat{\rho}(\psi')(f) \psi + (-1)^{\ell + |f|(|\psi|+|\psi'|+\ell + 1)} \<\psi,\psi'\>_{\E} \dD{f}.
\end{equation}
Applying $\rho$ on both sides of this equation, using (\ref{eq_rhoishom}) and the fact that $g_{\E}$ is an isomorphism, one can prove that $\rho \circ \dD = 0$. Since $\dD{f} = \rho^{\ast}( \dr{f})$ and $\dr{f}$ locally generate $\Omega^{1}_{\M}$, this already implies $\rho \circ \rho^{\ast} = 0$. The claim about (\ref{eq_gCAsequence}) follows immediately. To prove the first equation in (\ref{eq_bracketswithD}), one uses the fact that $\<\phi,\dD{f}\>_{\E} = (-1)^{|f|+\ell} \hat{\rho}(\phi)f$ for all $\phi \in \Gamma_{\E}(M)$ and $f \in \C^{\infty}_{\M}(M)$. By applying $\hat{\rho}(\psi)$ on this equation and using (\ref{eq_CApairingcomp}) and (\ref{eq_rhoishom}), one finds $\<\phi, [\psi,\dD{f}]_{\E} \> = (-1)^{|f|+\ell} \<\phi, \dD{ \<\psi, \dD{f}\>_{\E}} \>_{\E}$. The claim then follows from the fact that $g_{\E}$ is an isomorphism. The other of the two equations then follows from the first one together with (\ref{eq_CAskewsym}). 
\end{proof}
\begin{rem}
We have generalized the definition \cite{kosmann2005quasi} with a minimal number of requirements. In other literature, some of the consequences (\ref{eq_CALeibniz}, \ref{eq_rhoishom}) are given as axioms. 
\end{rem}
\begin{rem}
Our main intention was to have a bracket operation $[\cdot,\cdot]_{\E}$ of degree $\ell$. The graded Leibniz rule (\ref{eq_CALeibniz}) then requires the anchor $\rho$ to be a degree $\ell$ map. Axioms (\ref{eq_CApairingcomp}) and (\ref{eq_CAskewsym}) could be in principle formulated for a fiber-wise metric of an arbitrary (independent of $\ell$) degree. However, this freedom can be eliminated by considering a degree shifted vector bundle $\E[k]$ where $k \in \Z$ can be always chosen so that $g_{\E}$ has the same degree as $[\cdot,\cdot]_{\E}$. 

Note that after assuming that the bracket and $g_{\E}$ have the same degree, no further degree shifting is allowed. In other words, the degree $\ell$ is an ``intrinsic'' property of graded Courant algebroids. Observe that $[\cdot,\cdot]_{\E}$ defines a so called Loday bracket on $\Gamma_{\E}(M)$, see \S 2.1 of \cite{Kosmann1996}. 
\end{rem}

It is obvious that the definition of graded Courant algebroid was tailored in order for the following pivotal example to fit in. Let $\ell \in \Z$ be fixed.
\begin{example}[\textbf{Degree $\ell$ Dorfman bracket}] \label{ex_gDorfman}
Let $\E = T[\ell]\M \oplus T^{\ast}\M$ be equipped with the fiber-wise metric $g_{\E}$ introduced in Example \ref{ex_canpairing}. Let $H \in \Omega^{3}_{\M}(M)$ with $|H| = -\ell$. Set
\begin{equation} \label{eq_gDorfman}
[(X,\xi),(Y,\eta)]_{D}^{H} := ([X,Y], (-1)^{|X|\ell} \Li{X}\eta - (-1)^{|Y|(|X|+\ell)} i_{Y} \dr{\xi} + (-1)^{\ell} H(X,Y,\cdot)),
\end{equation}
for all $(X,\xi),(Y,\eta) \in \Gamma_{\E}(M)$. Let $\rho(X,\xi) := X$ for all $(X,\xi) \in \Gamma_{\E}(M)$. We claim that $(\E, \rho, g_{\E}, [\cdot,\cdot]_{D}^{H})$ becomes a graded Courant algebroid of degree $\ell$, if and only if $\dr{H} = 0$. 

First, observe that $|\rho(X,\xi)| = |X| = |(X,\xi)| + \ell$, whence $\rho$ is a map of degree $\ell$. One has
\begin{equation}
\rho(f \tr (X,\xi)) = \rho( (-1)^{|f|\ell} fX, f\xi) = (-1)^{|f| \ell} fX = (-1)^{|f| \ell} f \rho(X,\xi),
\end{equation}
for all $(X,\xi) \in \Gamma_{\E}(M)$ and $f \in \C^{\infty}_{\M}(M)$. This proves that $\rho$ is $\C^{\infty}_{\M}(M)$-linear of degree $\ell$. 

Next, for every $\alpha \in \Omega^{1}_{\M}(M)$ and every $(X,\xi) \in \Gamma_{\E}(M)$, one finds 
\begin{equation}
[ \rho^{T}(\alpha)](X,\xi) \equiv (-1)^{|\alpha|\ell} \alpha( \rho(X,\xi)) = (-1)^{|\alpha| \ell} \alpha(X).
\end{equation}
Using (\ref{eq_dualgentangentident}), we can thus write $\rho^{T}(\alpha) = ((-1)^{\ell} \alpha, 0) \in (\Omega^{1}_{\M}[-\ell] \oplus \X_{\M})(M)$. Hence
\begin{equation}
\rho^{\ast}(\alpha) = (g_{\E}^{-1} \circ \rho^{T})(\alpha) = g_{\E}^{-1}( (-1)^{\ell} \alpha, 0) = (0, (-1)^{\ell} \alpha). 
\end{equation} 
In particular, we find that $\dD{f} = (0, (-1)^{\ell} \dr{f})$ for every $f \in \C^{\infty}_{\M}(M)$. This is indeed an $\R$-linear map of degree zero. 

Let us turn our attention to the bracket. Recall that for any $(X,\xi) \in \Gamma_{\E}(M)$, we have $|(X,\xi)| = |X| - \ell = |\xi|$. We must verify that $[(X,\xi),(Y,\eta)]_{D}^{H}$ is a section of $\E$ of degree $|(X,\xi)| + |(Y,\eta)| + \ell$. In particular, $[X,Y]$ must be a vector field of degree $(|(X,\xi)| + |(Y,\eta)| + \ell) + \ell$. But this is true as 
\begin{equation}
|[X,Y]| = |X| + |Y| = (|(X,\xi)| + \ell) + (|(Y,\eta)| + \ell).
\end{equation}
Next, all $1$-forms on the right-hand side of (\ref{eq_gDorfman}) must have a degree $|(X,\xi)| + |(Y,\eta)| + \ell$. But 
\begin{align}
|\Li{X}\eta| = & \ |X| + |\eta| = (|(X,\xi)| + \ell)) + |(Y,\eta)|, \\
|i_{Y}\dr{\xi}| = & \ |Y| + |\xi| = (|(Y,\eta)| + \ell) + |(X,\xi)|, \\
|H(X,Y,\cdot)| = & \ |X| + |Y| + |H| = (|(X,\xi)| + \ell) + (|(Y,\eta)| + \ell) - \ell. 
\end{align}
Note that $H(X,Y,\cdot) \in \Omega^{1}_{\M}(M)$ can be also written using the interior product:
\begin{equation}
H(X,Y,\cdot) = (-1)^{-(|X|-1)\ell + (|Y|-1)(|X| - \ell)} i_{Y} i_{X}H. 
\end{equation}
We leave the verification of (\ref{eq_CApairingcomp}) and (\ref{eq_CAskewsym}) to the reader. One employs just the Cartan relations (\ref{eq_LACartan1} - \ref{eq_LACartan5}) and definitions in the process. Finally, after a significant amount of work (especially to get the correct signs), one can show that the graded Jacobi identity (\ref{eq_CAgJI}) holds for sections $(X,\xi),(Y,\eta),(Z,\zeta) \in \Gamma_{\E}(M)$, if and only if the section $(0, \dr{H}(X,Y,Z,\cdot)) \in \Gamma_{\E}(M)$ vanishes. 

We conclude that $(\E,\rho,g_{\E},[\cdot,\cdot]_{D}^{H})$ forms a graded Courant algebroid of degree $\ell$, if and only if $\dr{H} = 0$. We call $[\cdot,\cdot]_{D}^{H}$ a \textbf{degree $\ell$ graded Dorfman bracket twisted by $H$}. 

Let us conclude this example with the following observation. Let $\omega \in \Omega^{2}_{\M}(M)$ with $|\omega| = - \ell$. One can use it to define a $\C^{\infty}_{\M}(M)$-linear vector bundle map $\omega^{\flat}: T[\ell]\M \rightarrow T^{\ast}\M$ of degree zero given by $[\omega^{\flat}(X)](Y) := \omega(X,Y)$. Equivalently, $\omega^{\flat}(X) = (-1)^{(|X|-1)\ell} i_{X}\omega$. Define
\begin{equation} \label{eq_etoomega}
e^{\omega}(X,\xi) := (X, \xi + \omega^{\flat}(X)),
\end{equation}
for all $(X,\xi) \in \Gamma_{\E}(M)$. It follows that $e^{\omega}: \E \rightarrow \E$ is a degree zero vector bundle isomorphism with the inverse $e^{-\omega}$. Using definitions and the Cartan relations (\ref{eq_LACartan1}, \ref{eq_LACartan3}), one can show that 
\begin{equation} \label{eq_gDorfmanomegatwist}
e^{\omega}( [(X,\xi),(Y,\eta)]_{D}^{H + \dr{\omega}}) = [e^{\omega}(X,\xi), e^{\omega}(Y,\eta)]_{D}^{H},
\end{equation}
for all $(X,\xi),(Y,\eta) \in \Gamma_{\E}(M)$. Also note that $\rho \circ e^{\omega} = \rho$ and $\< e^{\omega}(X,\xi), e^{\omega}(Y,\eta)\>_{\E} = \<(X,\xi),(Y,\eta)\>_{\E}$.  Note that (\ref{eq_gDorfmanomegatwist}) was one of the most useful guides towards correct signs in (\ref{eq_gDorfman}) and Definition \ref{def_gCourant}. 
\end{example}
One obtains the following graded version of the classification \cite{Severa:2017oew} of exact Courant algebroids.
\begin{tvrz}[\textbf{Graded Ševera classification}]
We say that a graded Courant algebroid $(\E,\rho,g_{\E},[\cdot,\cdot]_{\E})$ of degree $\ell$ is \textbf{exact}, if the sequence (\ref{eq_gCAsequence}) is exact. 

Then isomorphism classes of exact Courant algebroids of degree $\ell$ over $\M$ are in one-to-one correspondence with de Rham cohomology classes $[H] \in H^{3}_{\M}(M)$ of degree $-\ell$. In particular, for $\ell \neq 0$, there is a unique isomorphism class of exact graded Courant algebroids of degree $\ell$. 
\end{tvrz}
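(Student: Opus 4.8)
The plan is to mimic Ševera's original argument, adapted to the graded signs. Fix an exact graded Courant algebroid $(\E,\rho,g_{\E},[\cdot,\cdot]_{\E})$ of degree $\ell$. The first step is to produce an \emph{isotropic} splitting of the sequence (\ref{eq_gCAsequence}), that is a degree zero vector bundle map $s: T[\ell]\M \rightarrow \E$ with $\rho \circ s = \1$ and $\<s(X),s(Y)\>_{\E}=0$. Since (\ref{eq_gCAsequence}) is a short exact sequence of graded vector bundles it splits (glue local splittings by a partition of unity, or use projectivity of $\Gamma_{T[\ell]\M}(M)$); let $s_{0}$ be any splitting. The form $B(X,Y) := \<s_{0}(X),s_{0}(Y)\>_{\E}$ is $\C^{\infty}_{\M}(M)$-bilinear and graded symmetric of degree $-\ell$, hence corresponds to a $2$-tensor, and one corrects $s_{0}$ by a section of $\im(\rho^{\ast})$ --- explicitly $s := s_{0} - \tfrac{1}{2}\rho^{\ast}\circ B^{\flat}$ up to the appropriate sign --- so that, using $\rho\circ\rho^{\ast}=0$ from Proposition \ref{tvrz_CAconsequences}, $s$ remains a splitting and is now isotropic.

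Second, transport all structure along the resulting degree zero isomorphism $\E \cong T[\ell]\M \oplus T^{\ast}\M$, $(X,\xi) \mapsto s(X) + \rho^{\ast}(\xi)$. Exactness together with $\rho\circ\rho^{\ast}=0$ and the isotropy of $s$ force the transported metric to be exactly the canonical pairing of Example \ref{ex_canpairing} and the anchor to be the projection. It remains to identify the bracket. Define $H(X,Y,Z) := \<[s(X),s(Y)]_{\E},s(Z)\>_{\E}$ (up to a fixed sign). Using axiom (c1) and the graded skew-symmetry (c2), one checks that $H$ is $\C^{\infty}_{\M}(M)$-linear and graded totally antisymmetric in its three slots, i.e. $H \in \Omega^{3}_{\M}(M)$ with $|H|=-\ell$; the graded Leibniz rule (\ref{eq_CALeibniz}) then pins down the entire bracket in terms of Lie derivatives, the de Rham differential and $H$, reproducing exactly the twisted Dorfman bracket (\ref{eq_gDorfman}). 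By Example \ref{ex_gDorfman} the Jacobi identity (c3), which holds by hypothesis, is then equivalent to $\dr{H}=0$.

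Third, show the class is the complete invariant. Two isotropic splittings $s,s'$ of (\ref{eq_gCAsequence}) differ by a degree zero map $T[\ell]\M \to \im(\rho^{\ast}) \cong T^{\ast}\M$; isotropy forces this map to be $\omega^{\flat}$ for a unique $\omega \in \Omega^{2}_{\M}(M)$ with $|\omega|=-\ell$, so $s'$ is $s$ post-composed with the orthogonal automorphism $e^{\omega}$ of Example \ref{ex_gDorfman}. The intertwining relation (\ref{eq_gDorfmanomegatwist}) shows precisely that passing from $s$ to $s'$ replaces $H$ by $H+\dr{\omega}$. Hence $[H]\in H^{3}_{\M}(M)$ in degree $-\ell$ is independent of the splitting, and the same relation shows it is preserved by any degree zero isomorphism of exact graded Courant algebroids, so it is a genuine isomorphism invariant. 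Conversely every closed $H$ of degree $-\ell$ yields the exact algebroid $[\cdot,\cdot]_{D}^{H}$, and $e^{\omega}$ realizes an isomorphism between the models for $H$ and $H+\dr{\omega}$; this gives the claimed bijection between isomorphism classes and $H^{3}_{\M}(M)$ in degree $-\ell$.

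Finally, for the last assertion one invokes that the graded de Rham cohomology reduces to that of the body: $H^{3}_{\M}(M)$ is concentrated in internal degree $0$ and isomorphic to the ordinary $H^{3}_{\mathrm{dR}}(M)$ (the graded Poincaré lemma, cf.\ \cite{vysoky2021global}), so for $\ell\neq 0$ the relevant group in degree $-\ell$ vanishes, leaving a single class $[0]$ --- the untwisted Dorfman bracket. I expect the genuine obstacle to be the second step: verifying, with all graded signs under control, that an isotropic frame forces the abstract bracket into the precise Dorfman normal form (\ref{eq_gDorfman}). The computation is parallel to the ungraded case, but the bookkeeping of the Koszul signs attached to $\ell$ and to the degree shift $T[\ell]\M$ is where the real care is needed.
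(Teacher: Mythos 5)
Your proposal is correct and follows essentially the same route as the paper: split the sequence, correct to an isotropic splitting, transport the structure via $\Psi_{s}(X,\xi)=s(X)+(-1)^{\ell}\rho^{\ast}(\xi)$ to land on the twisted Dorfman bracket, show the change of splitting shifts $H$ by $\dr{\omega}$ via $e^{\omega}$ and (\ref{eq_gDorfmanomegatwist}), and use the concentration of $H^{3}_{\M}(M)$ in degree zero for the final assertion. The only cosmetic difference is that you extract $H$ by pairing $[s(X),s(Y)]_{\E}$ with $s(Z)$, whereas the paper defines it through $[s(X),s(Y)]_{\E}=s([X,Y])+\rho^{\ast}(H_{s}(X,Y,\cdot))$; these are equivalent.
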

\begin{proof}
First, recall that an isomorphism of two graded Courant algebroids $(\E,\rho,g_{\E},[\cdot,\cdot]_{\E})$ and $(\E', \rho', g_{\E'}, [\cdot,\cdot]_{\E'})$ is a degree zero vector bundle isomorphism $F: \E \rightarrow \E'$, such that 
\begin{equation}
\rho' \circ F = \rho, \; \; \<F(\psi),F(\psi')\>_{\E'} = \<\psi,\psi'\>_{\E}, \; \; F([\psi,\psi']_{\E}) = [F(\psi),F(\psi')]_{\E'},
\end{equation}
for all $\psi,\psi' \in \Gamma_{\E}(M)$. Next, for each $p \in \Z$, the exterior differential $\dr: \Omega^{p}_{\M}(M) \rightarrow \Omega^{p+1}_{\M}(M)$ preserves degrees. It follows that the $p$-th de Rham cohomology $H^{p}_{\M}(M)$ naturally becomes a graded vector space. However, it was observed in \cite{roytenberg2002structure} that its only non-zero component is the zeroth one, that is $[\omega] \neq 0$, only if $|\omega| = 0$. This explains the last sentence of the statement.

Now, let $(\E,\rho,g_{\E},[\cdot,\cdot]_{\E})$ be a given exact graded Courant algebroid of degree $\ell$. One can use partitions of unity to show that every short exact sequence (\ref{eq_gCAsequence}) splits, that is there exists a degree zero vector bundle map $s: T[\ell]\M \rightarrow \E$ such that $\rho \circ s = \1_{T[\ell]\M}$. One can always modify it so that $\<s(X),s(Y)\>_{\E} = 0$ for all $X,Y \in \X_{\M}(M)$. Such $s$ is called an \textit{isotropic splitting}.

To every such splitting, one can assign a degree zero vector bundle isomorphism $\Psi_{s}: T[\ell]\M \oplus T^{\ast}\M \rightarrow \E$ given by $\Psi_{s}(X,\xi) = s(X) + (-1)^{\ell} \rho^{\ast}(\xi)$. One utilizes $\Psi_{s}$ to induce a graded Courant algebroid structure on $T[\ell]\M \oplus T^{\ast}\M$ by declaring it to be an isomorphism of graded Courant algebroids. It is a straightforward calculation to show that in this way, one obtains precisely the structure from Example \ref{ex_gDorfman} for  closed $H_{s} \in \Omega^{3}_{\M}(M)$ of degree $-\ell$ obtained from the formula 
\begin{equation}
[s(X),s(Y)]_{\E} = s([X,Y]) + \rho^{\ast}( H_{s}(X,Y,\cdot)).
\end{equation}
This shows that every exact graded Courant algebroid of degree $\ell$ is isomorphic to the one in Example \ref{ex_gDorfman} for some $H_{s} \in \Omega^{3}_{\M}(M)$ of degree $-\ell$. For any other isotropic splitting $s': T[\ell]\M \rightarrow \E$, there is a unique $\omega \in \Omega^{2}_{\M}(M)$ with $|\omega| = -\ell$, such that $s'$ and $s$ are related by
\begin{equation}
s'(X) = s(X) + (-1)^{\ell} \rho^{\ast}( \omega^{\flat}(X)),
\end{equation} 
for all $X \in \X_{\M}(M)$. Consequently, one has $\Psi_{s'} = \Psi_{s} \circ e^{\omega}$, see (\ref{eq_etoomega}). It follows from (\ref{eq_gDorfmanomegatwist}) that $H_{s'} = H_{s} + \dr{\omega}$. Hence to every exact graded Courant algebroid $\E$ of degree $\ell$, we may assign a unique class in $H^{3}_{\M}(M)$ of degree $-\ell$. If $F: \E \rightarrow \E'$ is an isomorphism of exact graded Courant algebroids, every isotropic splitting $s: T[\ell]\M \rightarrow \E$ induces an isotropic splitting $s' := F \circ s: T[\ell]\M \rightarrow \E'$ such that $H_{s'} = H_{s}$. 

Conversely, to any class $[H] \in H^{3}_{\M}(M)$ of degree $-\ell$, one can use its representative $H \in \Omega^{3}_{\M}(M)$ to construct an exact graded Courant algebroid of degree $\ell$ as in Example \ref{ex_gDorfman}. A different choice of a representative $H' = H + \dr{\omega}$ induces their isomorphism $e^{\omega}$ due to (\ref{eq_gDorfmanomegatwist}). 
\end{proof}
\section{Dirac structures}
Let us start on graded linear algebra level. Suppose $V \in \gVect$ is a finite-dimensional graded vector space together with a degree $\ell$ isomorphism $g_{V}: V \rightarrow V^{\ast}$, such that 
\begin{equation}
\<v,w\>_{V} := (-1)^{(|v|+\ell)\ell} [g_{V}(v)](w)
\end{equation}
induces a graded symmetric bilinear form $\<\cdot,\cdot\>_{V}: V \times V \rightarrow \R$ of degree $\ell$, that is 
\begin{equation}
\<v,w\>_{V} = (-1)^{(|v|+\ell)(|w|+\ell)} \<w,v\>_{V},
\end{equation}
for all $v,w \in V$. Note that $\<v,w\>_{V} \neq 0$, only if $|v|+|w|+\ell = 0$. If $\gdim(V) = (r_{j})_{j \in \Z}$, then $g_{V}$ is an isomorphism, only if $r_{j} = r_{-(j+\ell)}$ for all $j \in \Z$. We say that $(V,g_{V})$ is a \textbf{quadratic graded vector space of degree $\ell$}.

Now, let $L \subseteq V$ be a graded subspace with $\gdim(L) = (q_{j})_{j \in \Z}$. First, we define the \textbf{annihilator of $L$} for each $j \in \Z$ by
\begin{equation}
\an(L)_{j} := \an(L_{-j}) \equiv \{ \alpha \in (V^{\ast})_{j} \; | \; \alpha(v) = 0 \text{ for all } v \in L_{-j} \}.
\end{equation}
This defines a graded subspace $\an(L) \subseteq V^{\ast}$ with $\gdim(\an(L)) = (r_{-j} - q_{-j})_{j \in \Z}$. Let 
\begin{equation}
L^{\perp} := g_{V}^{-1}(\an(L)) = \{ v \in V \; | \; \<v,w\>_{V} = 0 \text{ for all } w \in L \}.
\end{equation}
$L^{\perp}$ is called the \textbf{orthogonal complement to $L$}. One finds $\gdim(L^{\perp}) = ( r_{-(j+\ell)} - q_{-(j+\ell)})_{j \in \Z}$.

\begin{definice}
Let $(V,g_{V})$ be a quadratic graded vector space of degree $\ell$, and let $L \subseteq V$ be a graded subspace. We say that $L$ is \textbf{isotropic}, if $L \subseteq L^{\perp}$. An isotropic graded subspace is \textbf{maximal}, if $L$ is not properly contained in any isotropic graded subspace. We say that $L$ is a \textbf{Lagrangian graded subspace}, if $L = L^{\perp}$. 
\end{definice}

The proof of the following proposition is similar to the ordinary case and can be deduced from definitions and properties of ordinary bilinear forms. We leave it to the reader. 
\begin{tvrz} \label{tvrz_maxisos1}
Let $(V,g_{V})$ be a quadratic graded vector space of degree $\ell$. Let $L \subseteq V$ be a graded subspace. Let $(r_{j})_{j \in \Z} = \gdim(V)$ and $(q_{j})_{j \in \Z} = \gdim(L)$.
\begin{enumerate}[(1)]
\item For $\ell \pmod 4 \neq 0$, the following statements are equivalent: 
\begin{enumerate} 
\item $L$ is maximal isotropic;
\item $L$ is Lagrangian;
\item $L$ is isotropic and $r_{j} = q_{j} + q_{-(j+\ell)}$ for all $j \in \Z$. 
\end{enumerate}
\item Let $\ell \pmod 4 = 0$, that is $\ell = 2k$ for even $k \in \Z$. Then $\<\cdot,\cdot\>_{V}$ restricts to a non-degenerate symmetric bilinear form $\<\cdot,\cdot\>_{V}^{0}$ on $V_{-k}$. Let $(s,t)$ be the signature of $\<\cdot,\cdot\>_{V}^{0}$. Then the following statements are equivalent:
\begin{enumerate}
\item $L$ is maximal isotropic;
\item $L$ is Lagrangian (only for $s = t$);
\item $L$ is isotropic, $q_{-k} = \min \{ s,t \}$, and $r_{-k+i} = q_{-k+i} + q_{-k-i}$ for all $i \in \N$.
\end{enumerate}
\end{enumerate}
\end{tvrz}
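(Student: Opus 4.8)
The plan is to reduce everything to ordinary (ungraded) bilinear form theory by splitting $V$ into mutually orthogonal degree-blocks. Since $\<v,w\>_{V} \neq 0$ forces $|v|+|w|+\ell = 0$, the form pairs $V_{j}$ with $V_{-(j+\ell)}$, so I would first group the degrees into the pairs $\{j,-(j+\ell)\}$. When $2j+\ell \neq 0$ these degrees are distinct and, because $g_{V}$ is an isomorphism (whence $r_{j} = r_{-(j+\ell)}$), the form restricts to a perfect pairing making $W_{j} := V_{j} \oplus V_{-(j+\ell)}$ a hyperbolic block with $V_{j}$ and $V_{-(j+\ell)}$ isotropic. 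The degenerate case $2j+\ell = 0$ occurs only when $\ell$ is even, say $\ell = 2k$, and then $V_{-k}$ is self-paired. Evaluating the symmetry relation on $V_{-k}$, where $|v| = |w| = -k$ gives $(|v|+\ell)(|w|+\ell) = k^{2}$, yields $\<v,w\>_{V} = (-1)^{k} \<w,v\>_{V}$. Hence the restriction of $\<\cdot,\cdot\>_{V}$ to $V_{-k}$ is a non-degenerate \emph{symmetric} form when $k$ is even (i.e. $\ell \equiv 0 \pmod 4$) and a \emph{symplectic} form when $k$ is odd (i.e. $\ell \equiv 2 \pmod 4$). This single sign computation is exactly what separates case (1) from case (2).

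Next I would localize the problem to the blocks. A graded subspace $L = \bigoplus_{j} L_{j}$ is isotropic iff for every pair the restricted pairing between $L_{j}$ and $L_{-(j+\ell)}$ vanishes, so isotropy decouples across the orthogonal blocks; since any enlarging graded isotropic $L'$ likewise decouples, $L$ is maximal isotropic iff its restriction to each block is maximal isotropic there. In a hyperbolic block a graded isotropic is $A \oplus B$ with $B$ contained in the annihilator $\an(A)$ under the pairing, and the standard annihilator argument shows it is maximal iff $B = \an(A)$; such a subspace is automatically Lagrangian and satisfies $q_{j} + q_{-(j+\ell)} = r_{j}$. For the middle block I invoke the two classical facts about real non-degenerate forms: a symplectic form has maximal isotropics of dimension $\tfrac{1}{2}\dim V_{-k}$, all Lagrangian, whereas a symmetric form of signature $(s,t)$ has maximal isotropics of dimension $\min\{s,t\}$, which are Lagrangian precisely when $s = t$.

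Assembling the blocks then proves both statements. The equivalence (b)$\Leftrightarrow$(c) is a pure dimension count valid for every $\ell$: for isotropic $L$ one has $L = L^{\perp}$ iff $\gdim(L) = \gdim(L^{\perp}) = (r_{-(j+\ell)} - q_{-(j+\ell)})_{j\in\Z}$, that is iff $q_{j} + q_{-(j+\ell)} = r_{j}$ for all $j$. In case (1) every block is hyperbolic or symplectic, so ``maximal isotropic'' and ``Lagrangian'' coincide block-wise, giving (a)$\Leftrightarrow$(b), and the count closes the triangle with (c). In case (2) the off-diagonal blocks again force $r_{-k+i} = q_{-k+i} + q_{-k-i}$ for $i \in \N$, while maximality on the symmetric middle block forces $q_{-k} = \min\{s,t\}$, yielding (a)$\Leftrightarrow$(c); the Lagrangian condition (b) additionally demands $q_{-k} = \tfrac{1}{2} r_{-k}$ on the middle block, which is attainable exactly when $s = t$, explaining the caveat attached to (b).

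I expect the main obstacle to be the middle-block analysis rather than the combinatorial reduction. Concretely, the two delicate points are the sign tracking in the symmetry relation that pins down whether the middle form is symmetric or antisymmetric (the source of the $\ell \bmod 4$ trichotomy), and the genuinely substantive input of real Witt theory for the symmetric case, namely that maximal isotropics have dimension $\min\{s,t\}$ and that Lagrangians exist only for $s = t$. Once the orthogonal decomposition and the block-localization of maximality are in place, everything else is the textbook classification of isotropic subspaces of hyperbolic and symplectic spaces together with this signature statement.
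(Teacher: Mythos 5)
Your proposal is correct and is precisely the argument the paper has in mind: the paper omits the proof, remarking only that it "can be deduced from definitions and properties of ordinary bilinear forms," and your block decomposition into hyperbolic pairs $V_{j}\oplus V_{-(j+\ell)}$ plus the self-paired middle block $V_{-k}$, the sign computation $(-1)^{k^{2}}=(-1)^{k}$ distinguishing the symmetric from the symplectic middle case, and the appeal to Witt theory for the signature statement are exactly the intended ingredients. No gaps.
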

\begin{rem}
Observe that in \textit{(2)-(c)}, the condition $L \subseteq L^{\perp}$ implies that $L_{-k} \subseteq V_{-k}$ is isotropic with respect to $\<\cdot,\cdot\>_{0}$, and the condition $q_{-k} = \min\{s,t\}$ ensures that $L_{-k}$ is maximal. Note that $(1)$ includes the case where $\ell = 2k$ for odd $k \in \Z$. In this case $\<\cdot,\cdot\>_{0}$ is a non-degenerate skew-symmetric bilinear form on $V_{-k}$ and $L_{-k} \subseteq V_{-k}$ is maximal isotropic, iff it is Lagrangian. 
\end{rem}
Now, let us turn our attention to graded vector bundles. First, we must recall some basic facts about subbundles and their orthogonal complements. 
\begin{definice}
Let $\E$ be a graded vector bundle over $\M$. Let $(r_{j})_{j \in \Z} = \grk(\E)$. A \textbf{subbundle} $\cL \subseteq \E$ of a graded rank $(q_{j})_{j \in \Z}$ consists of the following data:
\begin{enumerate}[(i)]
\item A subsheaf $\Gamma_{\cL} \subseteq \Gamma_{\E}$ of graded $\C^{\infty}_{\M}$-submodules. In particular, $\Gamma_{\cL}(U) \subseteq \Gamma_{\E}(U)$ is a graded $\C^{\infty}_{\M}(U)$-submodule for each $U \in \Op(M)$. 
\item Let $r := \sum_{j \in \Z} r_{j}$ and $q := \sum_{j \in \Z} q_{j}$. For each $m \in M$, there exists $U \in \Op_{m}(M)$ and a local frame $\{ \Phi_{\lambda} \}_{\lambda=1}^{r}$ for $\E$ over $U$, such that $\{ \Phi_{\lambda} \}_{\lambda=1}^{q}$ becomes a local frame for $\cL$ over $U$. We say that $\{ \Phi_{\lambda} \}_{\lambda=1}^{r}$ is \textbf{adapted to $\cL$}. 
\end{enumerate}
In particular, $\Gamma_{\cL}$ is a sheaf of sections of a graded vector bundle $\cL$ of a graded rank $(q_{j})_{j \in \Z}$. 
\end{definice}
For any subbundle $\cL \subseteq \E$, one can define its \textbf{annihilator} $\an(\cL)$. For each $U \in \Op(M)$, let 
\begin{equation} \label{eq_anihillator}
\Gamma_{\an(\cL)}(U) := \an( \Gamma_{\cL}(U)) \subseteq \Gamma_{\E^{\ast}}(U).
\end{equation}
It is not difficult to see that $\Gamma_{\an(\cL)} \subseteq \Gamma_{\E^{\ast}}$ is a subsheaf of graded $\C^{\infty}_{\M}$-submodules. Suppose $\grk(\E) = (r_{j})_{j \in \Z}$ and $\grk(\cL) = (q_{j})_{j \in \Z}$. Let $\{ \Phi_{\lambda} \}_{\lambda=1}^{r}$ be the local frame for $\E$ adapted to $\cL$. It follows that the dual frame $\{ \Phi^{\lambda} \}_{\lambda=1}^{r}$ for $\E^{\ast}$ is adapted to $\an(\cL)$, proving that $\an(\cL)$ is a subbundle of $\E^{\ast}$ of a graded rank $(r_{-j} - q_{-j})_{j \in \Z}$. 

Suppose $(\E,g_{\E})$ is a quadratic graded vector bundle of degree $\ell$. One can thus define the \textbf{orthogonal complement $\cL^{\perp}$ to $\cL$} by declaring 
\begin{equation}
\Gamma_{\cL^{\perp}}(U) := \Gamma_{\cL}(U)^{\perp} \equiv (g_{\E}|_{U})^{-1}\{ \an(\Gamma_{\cL}(U))\},
\end{equation}
where $g_{\E}|_{U}: \Gamma_{\E}(U) \rightarrow \Gamma_{\E^{\ast}}(U)$ is obtained by the restriction of $g_{\E}$, see Remark \ref{rem_VBmorphisms}. Since $\Gamma_{\cL^{\perp}}$ is the image of $\Gamma_{\an(\cL)}$ under a sheaf isomorphism of degree $-\ell$, it defines a subbundle $\cL^{\perp} \subseteq \E$ of a graded rank $(r_{-(j+\ell)} - q_{-(j+\ell)})_{j \in \Z}$. 

To talk about maximal isotropic subbundles, one has to have some notion of a fiber of a graded vector bundle. Since we deal with graded vector bundles only through their sheaves of sections, this requires some work. For each $m \in M$, the \textbf{fiber $\E_{m}$ of $\E$ over $m$} is the graded vector space
\begin{equation}
\E_{m} := \R \otimes_{\C^{\infty}_{\M,m}} \Gamma_{\E,m},
\end{equation}
where $\Gamma_{\E,m} \in \gVect$ is the stalk of the sheaf $\Gamma_{\E}$ at $m$, and $\C^{\infty}_{\M,m} \in \gcAs$ is the stalk of the sheaf of functions $\C^{\infty}_{\M}$ at $m$. $\Gamma_{\E,m}$ has a natural graded $\C^{\infty}_{\M,m}$-module structure induced from $\Gamma_{\E}$. Indeed, let $[f]_{m} \in \C^{\infty}_{\M,m}$ and $[\psi]_{m} \in \Gamma_{\E,m}$ be two germs represented by $f \in \C^{\infty}_{\M}(M)$ and $\psi \in \Gamma_{\E}(M)$. Then
\begin{equation}
[f]_{m} \tr [\psi]_{m} := [f \psi]_{m}.
\end{equation}
The graded vector space $\R$ has the graded $\C^{\infty}_{\M,m}$-module structure given by $[f]_{m} \tr \lambda := f(m)\lambda$ for every $[f]_{m} \in \C^{\infty}_{\M,m}$ and $\lambda \in \R$. Now, for every $U \in \Op_{m}(M)$ and $\psi \in \Gamma_{\E}(U)$, its \textbf{value $\psi|_{m} \in \E_{m}$ at $m$} is defined as $\psi|_{m} := 1 \otimes [\psi]_{m}$. Let us summarize basic facts about fibers. 
\begin{tvrz}
The fibers of graded vector bundles have the following properties:
\begin{enumerate}[(i)]
\item Let $m \in M$ and let $\{ \Phi_{\lambda} \}_{\lambda=1}^{r}$ be a local frame for $\E$ over $U \in \Op_{m}(M)$. Then $\{ \Phi_{\lambda}|_{m} \}_{\lambda=1}^{r}$ forms a total basis for $\E_{m}$. In particular, one has $\gdim(\E_{m}) = \grk(\E)$. 
\item For every graded vector bundle map $F: \E \rightarrow \E'$ and every $m \in M$, there is a graded linear map $F_{m}: \E_{m} \rightarrow \E'_{m}$ determined uniquely by $F_{m}(\psi|_{m}) := F(\psi)|_{m}$ for all $\psi \in \Gamma_{\E}(M)$. 
\item For every $m \in M$ and $\ell \in \Z$, one has identifications $(\E^{\ast})_{m} \cong (\E_{m})^{\ast}$ and $(\E[\ell])_{m} \cong \E_{m}[\ell]$. 
\item Let $\cL \subseteq \E$ be a subbundle of $\E$ and let $I: \cL \rightarrow \E$ be the canonical inclusion. Then for each $m \in M$, $I_{m}: \cL_{m} \rightarrow \E_{m}$ is injective. Consequently, we always identify $\cL_{m}$ with the graded subspace $I_{m}(\cL_{m}) \subseteq \E_{m}$. 
\item Suppose that $(\E,g_{\E})$ is a quadratic graded vector bundle of degree $\ell$. For each $m \in M$, there is an induced degree $\ell$ isomorphism $g_{\E_{m}}: \E_{m} \rightarrow (\E_{m})^{\ast}$ and consequently a graded symmetric bilinear form $\<\cdot,\cdot\>_{\E_{m}}: \E_{m} \times \E_{m} \rightarrow \R$ of degree $\ell$. In other words, $(\E_{m}, g_{\E_{m}})$ becomes a quadratic graded vector space of degree $\ell$ for every $m \in M$.

Let $\cL \subseteq \E$ be a subbundle of $\E$. Then every fiber of its orthogonal complement is the orthogonal complement of the fiber, $(\cL^{\perp})_{m} = (\cL_{m})^{\perp}$ for all $m \in M$. 
\end{enumerate}
\end{tvrz}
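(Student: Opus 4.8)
The plan is to reduce everything to the single structural fact that $\E_m$ is the reduction of the stalk modulo the maximal ideal. Writing $\frm_m \subseteq \C^{\infty}_{\M,m}$ for the germs $[f]_m$ with vanishing body $f(m)=0$, locality of the stalk makes $\frm_m$ its unique maximal graded ideal with $\C^{\infty}_{\M,m}/\frm_m \cong \R$, and the $\R$-module structure $[f]_m \tr \lambda := f(m)\lambda$ is precisely this quotient. Hence $\E_m = \R \otimes_{\C^{\infty}_{\M,m}} \Gamma_{\E,m} \cong \Gamma_{\E,m}/\frm_m\Gamma_{\E,m}$, the ordinary fiber. For (i), I would first note that a local frame $\{\Phi_\lambda\}_{\lambda=1}^r$ over $U$ is exactly a $\C^{\infty}_{\M}|_U$-linear sheaf isomorphism $\Gamma_\E|_U \cong \Gamma_{\M\times\R^{(r_j)}}|_U$; passing to stalks shows $\Gamma_{\E,m}$ is free over $\C^{\infty}_{\M,m}$ on the germs $\{[\Phi_\lambda]_m\}$, of degrees $|\Phi_\lambda|$. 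Reducing modulo $\frm_m$ then gives that $\{\Phi_\lambda|_m\}_{\lambda=1}^r$ is a total basis of $\E_m$ with the same degrees, so $\gdim(\E_m)_j = \#\{\lambda : |\Phi_\lambda| = j\} = r_j = \grk(\E)_j$.

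For (ii), by Remark \ref{rem_VBmorphisms} the map $F$ extends to a $\C^{\infty}_{\M}$-linear sheaf morphism, hence induces a degree $|F|$ graded map of stalks $[\psi]_m \mapsto [F(\psi)]_m$; the point is only to check it descends to fibers. This holds because for $[f]_m \in \frm_m$ the twisted linearity gives $[F(f\psi)]_m = (-1)^{|f||F|}[f]_m \tr [F(\psi)]_m \in \frm_m\Gamma_{\E',m}$, so $\frm_m\Gamma_{\E,m}$ is carried into $\frm_m\Gamma_{\E',m}$. Applying $\R\otimes_{\C^{\infty}_{\M,m}}(-)$ yields $F_m$ with $F_m(\psi|_m) = F(\psi)|_m$, and uniqueness is automatic since the values $\psi|_m$ already include the basis $\{\Phi_\lambda|_m\}$ from (i).

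For (iii) I would treat the dual via the canonical evaluation pairing $\Gamma_{\E^\ast}(U)\times\Gamma_\E(U)\to\C^{\infty}_{\M}(U)$, which descends to a pairing $(\E^\ast)_m\times\E_m\to\R$; on the dual frames of (i) it reads $\Phi^\lambda|_m(\Phi_\kappa|_m)=\delta^\lambda_\kappa$, so it is perfect and gives the natural isomorphism $(\E^\ast)_m \cong (\E_m)^\ast$. For the shift, the underlying stalk of $\Gamma_{\E[\ell]}$ agrees with that of $\Gamma_\E$ up to the degree shift, and the modified action $f\tr'\psi = (-1)^{|f|\ell}f\psi$ is invisible modulo $\frm_m$, since any germ with nonzero body has degree $0$, where the sign is trivial. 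Hence $(\E[\ell])_m \cong \E_m[\ell]$. Part (iv) is then immediate from an adapted frame $\{\Phi_\lambda\}_{\lambda=1}^r$: by (i) the elements $I_m(\Phi_\lambda|_m)=\Phi_\lambda|_m$ for $\lambda\le q$ form part of the basis of $\E_m$, hence are linearly independent, so $I_m$ is injective.

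Finally, for (v), applying (ii) to $g_\E$ and to $g_\E^{-1}$ (and using (iii)) gives mutually inverse maps, so $g_{\E_m}:\E_m\to(\E_m)^\ast$ is an isomorphism; graded symmetry of $\<\cdot,\cdot\>_{\E_m}$ is inherited by evaluating (\ref{eq_gradedsymmetricform}) at $m$, so $(\E_m,g_{\E_m})$ is quadratic of degree $\ell$. For $(\cL^\perp)_m=(\cL_m)^\perp$, the construction of $g_{\E_m}$ yields the compatibility $\<\psi|_m,\phi|_m\>_{\E_m}=\<\psi,\phi\>_\E(m)$. If $\psi$ is a local section of $\cL^\perp$ and $\phi$ a local section of $\cL$, then $\<\psi,\phi\>_\E=0$, so evaluating at $m$ and using that the $\cL$-frame values span $\cL_m$ gives $(\cL^\perp)_m\subseteq(\cL_m)^\perp$. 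The hard part is the reverse inclusion, which I do not expect to prove directly at the level of sections; instead I close it by a dimension count, comparing $\gdim((\cL^\perp)_m)=\grk(\cL^\perp)=(r_{-(j+\ell)}-q_{-(j+\ell)})_{j\in\Z}$ from (i) with the linear-algebra value $\gdim((\cL_m)^\perp)=(r_{-(j+\ell)}-q_{-(j+\ell)})_{j\in\Z}$ established for quadratic graded vector spaces. Since the graded dimensions agree and one subspace contains the other, they coincide.
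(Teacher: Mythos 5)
The paper offers no proof of this proposition at all, deferring to \S 5.2 of the cited reference; your argument is correct and is exactly the route one would expect there, namely identifying $\E_{m}$ with $\Gamma_{\E,m}/\frm_{m}\Gamma_{\E,m}$ via the locality of the stalk, working with (adapted) local frames, and closing the one non-formal inclusion $(\cL_{m})^{\perp} \subseteq (\cL^{\perp})_{m}$ by the graded dimension count. The only step worth making explicit is in (ii): to see that the values $\psi|_{m}$ of \emph{global} sections already span $\E_{m}$, one should extend the local frame elements by bump functions, which does not change their germs at $m$ and hence not their values in the fiber.
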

See \S 5.2 of \cite{vysoky2021global} for details and proofs.
\begin{definice}
Let $(\E,g_{\E})$ be a quadratic graded vector bundle of degree $\ell$ over $\M$. Let $\cL \subseteq \E$ be its subbundle. We say that $\cL$ is an \textbf{isotropic subbundle}, if $\cL \subseteq \cL^{\perp}$. We say that an isotropic subbundle $\cL$ is \textbf{maximal}, if for every $m \in M$, the fiber $\cL_{m}$ is a maximal isotropic subspace of $\E_{m}$. We say that $\cL$ is \textbf{Lagrangian}, if $\cL = \cL^{\perp}$. 
\end{definice}
We find a graded vector bundle version of Proposition \ref{tvrz_maxisos1}. 
\begin{tvrz} \label{tvrz_maxisos2}
Let $(\E,g_{\E})$ be a quadratic vector bundle of degree $\ell$ over $\M$. Let $\cL \subseteq \E$ be its subbundle. Let $(r_{j})_{j \in \Z} = \grk(\E)$ and $(q_{j})_{j \in \Z} = \grk(\cL)$.
\begin{enumerate}[(1)]
\item For $\ell \pmod 4 \neq 0$, the following statements are equivalent: 
\begin{enumerate} 
\item $\cL$ is maximal isotropic;
\item $\cL$ is Lagrangian;
\item $\cL$ is isotropic and $r_{j} = q_{j} + q_{-(j+\ell)}$ for all $j \in \Z$. 
\end{enumerate}
\item Let $\ell \pmod 4 = 0$, that is $\ell = 2k$ for even $k \in \Z$. Then for each $m \in M$, $\<\cdot,\cdot\>_{\E_{m}}$ restricts to the non-degenerate symmetric bilinear form $\<\cdot,\cdot\>_{\E_{m}}^{0}$ on $(\E_{m})_{-k}$. Let $(s_{m},t_{m})$ be the signature of $\<\cdot,\cdot\>_{\E_{m}}^{0}$. Then the following statements are equivalent:
\begin{enumerate}
\item $\cL$ is maximal isotropic;
\item $\cL$ is Lagrangian (only valid when $s_{m} = t_{m}$ for all $m \in M$);
\item $\cL$ is isotropic, $q_{-k} = \min \{ s_{m},t_{m} \}$ and $r_{-k+i} = q_{-k+i} + q_{-k-i}$ for all $m \in M$ and $i \in \N$.
\end{enumerate}
\end{enumerate}
\end{tvrz}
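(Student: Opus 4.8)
The plan is to reduce everything to the pointwise statement of Proposition \ref{tvrz_maxisos1}, applied to the quadratic graded vector spaces $(\E_m, g_{\E_m})$ furnished by the preceding proposition on fibers. The three facts I would lean on are: $\gdim(\E_m) = \grk(\E) = (r_j)$ and $\gdim(\cL_m) = \grk(\cL) = (q_j)$ for every $m$ (part (i)); the identification of $\cL_m$ with a genuine graded subspace of $\E_m$ (part (iv)); and the compatibility $(\cL^\perp)_m = (\cL_m)^\perp$ (part (v)). The first of these is exactly what makes the rank conditions in (c) point-independent: they are statements purely about the sequences $(r_j)$ and $(q_j)$ and so need be checked once rather than at each $m$.

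The first genuine step is the bridge between the sheaf-level notion of isotropy ($\cL\subseteq\cL^\perp$) and the pointwise one. I would establish only the easy implication: if $\cL\subseteq\cL^\perp$ as subbundles then, passing to fibers and using $(\cL^\perp)_m=(\cL_m)^\perp$, one gets $\cL_m\subseteq(\cL_m)^\perp$ for all $m$, so each $\cL_m$ is isotropic. It is worth noting that the converse genuinely fails: the fiber pairing $\langle\cdot,\cdot\rangle_{\E_m}$ is valued in $\R$ and hence only detects the complementary-degree (degree-zero) part of the section-level pairing $\langle\Phi_\lambda,\Phi_\kappa\rangle_{\E}$, whereas sheaf isotropy demands that this function vanish in every degree. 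The proposition sidesteps this because both (a) and (c) carry ``$\cL$ is isotropic'' as a standing hypothesis, so I only ever transport isotropy \emph{downward} to the fibers, never upward.

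With this in hand, the equivalence (a)$\iff$(c) in both cases is immediate: under the standing isotropy hypothesis I apply Proposition \ref{tvrz_maxisos1} to $(\E_m,g_{\E_m})$ at each $m$. In case (1), $\cL_m$ is maximal isotropic iff it is isotropic (automatic) and $r_j=q_j+q_{-(j+\ell)}$; since the latter is the same equation for every $m$, ``fiberwise maximal for all $m$'' collapses to precisely condition (c). In case (2) the same mechanism applies, reading the middle-degree data off the nondegenerate form $\langle\cdot,\cdot\rangle^0_{\E_m}$ on $(\E_m)_{-k}$ (a degree-zero pairing, hence exactly the part the fiber sees), and Proposition \ref{tvrz_maxisos1}(2) yields $q_{-k}=\min\{s_m,t_m\}$ together with $r_{-k+i}=q_{-k+i}+q_{-k-i}$ for $i\in\N$. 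Here one records that $q_{-k}$ is a single integer while $(s_m,t_m)$ is the signature of the ordinary metric bundle $E_{-k}$, which is locally constant, so the condition is consistent exactly when $\min\{s_m,t_m\}$ is constant.

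Finally, (b)$\iff$(c) is rank bookkeeping plus one structural lemma. Since $\grk(\cL^\perp)=(r_{-(j+\ell)}-q_{-(j+\ell)})_j$, the equality $\grk(\cL)=\grk(\cL^\perp)$ is, after reindexing, literally the rank condition in (c); thus (b) gives (c) at once. For (c)$\Rightarrow$(b) I have $\cL\subseteq\cL^\perp$ and $\grk(\cL)=\grk(\cL^\perp)$, whence $\cL_m=(\cL^\perp)_m$ for every $m$ by dimension count, and I then invoke the lemma that an inclusion of subbundles of $\E$ which is an isomorphism on every fiber is an equality of subbundles (proved locally with frames adapted to $\cL^\perp$, in the spirit of the classical rank argument). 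The clause ``only for $s_m=t_m$'' in the Lagrangian case then reflects that equal graded ranks of $\cL$ and $\cL^\perp$ force the middle degree to be balanced. I expect this last lemma --- upgrading a fiberwise equality to an equality of subsheaves --- to be the only point requiring real care; everything else is transport through the fiber functor together with the linear-algebraic Proposition \ref{tvrz_maxisos1}.
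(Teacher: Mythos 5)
Your proposal is correct and follows essentially the same route as the paper, which does not give a detailed proof but only a remark asserting that everything follows by applying Proposition \ref{tvrz_maxisos1} fiberwise, with the same two caveats you identify (isotropy transports only downward to fibers, and the signature is locally constant). The one place you go beyond the paper's remark is the $(c)\Rightarrow(b)$ step, where you correctly supply the needed lemma that an inclusion of subbundles of equal graded rank which is a fiberwise equality is an equality of subsheaves; this is a genuine (if routine) detail the paper leaves implicit.
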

\begin{rem}
All statements follow directly from Proposition \ref{tvrz_maxisos1}. Note that in the definition of isotropic subbundles, we assume that $\cL \subseteq \cL^{\perp}$, that is $\Gamma_{\cL} \subseteq \Gamma_{\cL^{\perp}}$, and not just the fiber-wise inclusion $\cL_{m} \subseteq (\cL_{m})^{\perp}$ for all $m \in M$. This is because by ``going fiber-wise'', one usually loses a lot of data. In other words, the isotropy of $\cL$ implies the isotropy of the graded vector space $\cL_{m}$ for all $m \in M$, but the converse statement may fail. Finally, observe that the signature $(s_{m},t_{m})$ is always constant in $m$ along each connected component of $M$. Since our subbundles are assumed to have a constant graded rank, we can safely assume that $(s_{m},t_{m})$ is globally constant (otherwise no maximally isotropic subbundles would exist).
\end{rem}
\begin{example}
Let $\E = T[\ell]\M \oplus T^{\ast}\M$ and $g_{\E}$ be defined as in Example \ref{ex_canpairing}. If $(n_{j})_{j \in \Z} = \gdim(\M)$, then $\grk(\E) = (n_{-(j+\ell)} + n_{j})_{j \in \Z}$. Recall that the \textbf{tangent space at $m \in M$} is defined as 
\begin{equation}
T_{m}\M = \gDer(\C^{\infty}_{\M}(M), \R).
\end{equation}
In other words, $v \in T_{m}\M$ is a degree $|v|$ graded linear map $v: \C^{\infty}_{\M}(M) \rightarrow \R$ satisfying 
\begin{equation}
v(fg) = v(f) g(m) + (-1)^{|v||f|} f(m) v(g),
\end{equation}
for all $f,g \in \C^{\infty}_{\M}(M)$. For each $m \in M$, $T_{m}\M$ can be canonically identified with the fiber $(T\M)_{m}$. For every vector field $X \in \X_{\M}(M)$, we have $X|_{m}(f) := (X(f))(m)$. Whence
\begin{equation}
\E_{m} = (T_{m}\M)[\ell] \oplus T^{\ast}_{m}\M.
\end{equation}
Let $(v,\alpha), (w,\beta) \in \E_{m}$. The induced bilinear form $\<(v,\alpha), (w,\beta)\>_{\E_{m}}$ is non-zero only if $|(v,\alpha)| + |(w,\beta)| + \ell = 0$. Recall that $|(v,\alpha)| = |v| - \ell = |\alpha|$, hence necessarily $|v| + |\beta| = 0$ and $|w| + |\alpha| = 0$, and in this case, one has 
\begin{equation}
\<(v,\alpha),(w,\beta)\>_{\E_{m}} = \alpha(w) + (-1)^{|v||w|} \beta(v). 
\end{equation}
Whenever $\ell = 2k$ for $k \in \Z$, we may restrict $\<\cdot,\cdot\>_{\E_{m}}$ to $(\E_{m})_{-k}$, finding an ordinary bilinear form
\begin{equation}
\<(v,\alpha),(w,\beta)\>_{\E_{m}}^{0} = \alpha(w) + (-1)^{k} \beta(v),
\end{equation}
for all $(v,\alpha),(w,\beta) \in (\E_{m})_{-k}$. For $\ell \pmod 4 = 0$, that is $k$ even, we see that $\<\cdot,\cdot\>^{0}_{\E_{m}}$ is indeed a symmetric bilinear form on $(\E_{m})_{-k}$ with a signature $(n_{-k},n_{-k})$, where $(n_{j})_{j \in \Z} = \gdim(\M)$. Hence by Proposition \ref{tvrz_maxisos2}, a subbundle $\cL \subseteq \E$ is maximally isotropic, iff it is Lagrangian, $\cL = \cL^{\perp}$. Equivalently, it has to be isotropic and its graded rank $(q_{j})_{j \in \Z}$ must for all $j \in \Z$ satisfy
\begin{equation} \label{eq_grkmaxisostandardgenvec}
n_{j} + n_{-(j+\ell)} = q_{j} + q_{-(j+\ell)}.
\end{equation}
\end{example}
\begin{definice}
Let $(\E,\rho,g_{\E},[\cdot,\cdot]_{\E})$ be a graded Courant algebroid of degree $\ell$. 

We say that a subbundle $\cL \subseteq \E$ is a \textbf{Dirac structure on $\E$}, if it is maximally isotropic and the graded submodule $\Gamma_{\cL}(M)$ is involutive under $[\cdot,\cdot]_{\E}$. 
\end{definice}
\begin{example} \label{ex_PoissonasDirac}
Let $(\E,\rho,g_{\E},[\cdot,\cdot]_{D}^{H})$ be a graded Courant algebroid of degree $\ell$ from Example \ref{ex_gDorfman}. This example should be considered a graded version of some ideas in the famous paper \cite{Severa:2001qm}.

Let us consider a vector bundle map $\Pi^{\sharp}: T^{\ast}\M \rightarrow T[\ell]\M$ of degree zero. Equivalently, this is a vector bundle map $\Pi^{\sharp}: T^{\ast}\M \rightarrow T\M$ of degree $\ell$. Its graph $\gr(\Pi^{\sharp})$ is a subbundle of $\E$ defined as
\begin{equation}
\Gamma_{\gr(\Pi^{\sharp})}(U) := \{ (\Pi^{\sharp}|_{U}(\xi),\xi) \; | \; \xi \in \Omega^{1}_{\M}(U) \} \subseteq \Gamma_{\E}(U),
\end{equation}
for each $U \in \Op(M)$. Observe that $\grk( \gr(\Pi^{\sharp})) = \grk(\Omega^{1}_{\M}) = (n_{j})_{j \in \Z}$. Hence it satisfies the constraint (\ref{eq_grkmaxisostandardgenvec}). The condition $\gr(\Pi^{\sharp}) \subseteq \gr(\Pi^{\sharp})^{\perp}$ is equivalent to
\begin{equation}
\<(\Pi^{\sharp}(\xi),\xi), (\Pi^{\sharp}(\eta),\eta) \>_{\E} = 0,
\end{equation}
for all $\xi,\eta \in \Omega^{1}_{\M}(M)$. By plugging in from (\ref{eq_canpairing}), this gives the equation
\begin{equation}
\xi(\Pi^{\sharp}(\eta)) + (-1)^{(|\xi|+\ell)(|\eta|+\ell)} \eta( \Pi^{\sharp}(\xi)) = 0.
\end{equation}
Using (\ref{eq_vfactionon1form}), one can rewrite this as $[\Pi^{\sharp}(\xi)](\eta) + (-1)^{|\xi||\eta|+\ell} [\Pi^{\sharp}(\eta)](\xi) = 0$. Let $\Pi: \X_{\M}(M) \times \X_{\M}(M) \rightarrow \C^{\infty}_{\M}(M)$ be a degree $\ell$ bilinear map defined by
\begin{equation} \label{eq_PiPisharp}
\Pi(\xi,\eta) := [\Pi^{\sharp}(\xi)](\eta),
\end{equation}
for all $\xi,\eta \in \Omega^{1}_{\M}(M)$. In other words, we have shown that $\gr(\Pi^{\sharp})$ is maximal isotropic, if and only if $\Pi^{\sharp}$ is induced using (\ref{eq_PiPisharp}) by a skew-symmetric bivector $\Pi \in \X^{2}_{\M}(M)$ for even $\ell$ and symmetric bivector $\Pi \in \~\X^{2}_{\M}(M)$ for odd $\ell$. Finally, for every $\xi,\eta \in \Omega^{1}_{\M}(M)$, one finds
\begin{equation}
\begin{split}
[(\Pi^{\sharp}(\xi),\xi), (\Pi^{\sharp}(\eta), \eta)]_{D}^{H} = & \ \big([\Pi^{\sharp}(\xi), \Pi^{\sharp}(\eta)], (-1)^{(|\xi|+\ell)\ell} \Li{\Pi^{\sharp}(\xi)}\eta \\
&  - (-1)^{(|\eta|+\ell)|\xi|} i_{\Pi^{\sharp}(\eta)} \dr{\xi} + (-1)^{\ell} H(\Pi^{\sharp}(\xi), \Pi^{\sharp}(\eta), \cdot) \big)
\end{split}
\end{equation}
It is convenient to define an $\R$-bilinear bracket $[\cdot,\cdot]_{\Pi}: \Omega^{1}_{\M}(M) \times \Omega^{1}_{\M}(M) \rightarrow \Omega^{1}_{\M}(M)$ by 
\begin{equation}
[\xi,\eta]_{\Pi} := (-1)^{(|\xi|+\ell)\ell} \Li{\Pi^{\sharp}(\xi)}\eta  - (-1)^{(|\eta|+\ell)|\xi|} i_{\Pi^{\sharp}(\eta)} \dr{\xi},
\end{equation}
for all $\xi,\eta \in \Omega^{1}_{\M}(M)$. Note that $|[\xi,\eta]_{\Pi}| = |\xi| + |\eta| + \ell$. We see that $\gr(\Pi^{\sharp})$ is involutive, iff 
\begin{equation}
[\Pi^{\sharp}(\xi), \Pi^{\sharp}(\eta)] = \Pi^{\sharp}\big( [\xi,\eta]_{\Pi} + (-1)^{\ell} H(\Pi^{\sharp}(\xi), \Pi^{\sharp}(\eta), \cdot) \big),
\end{equation}
for all $\xi,\eta \in \Omega^{1}_{\M}(M)$. This can be after a lot of work rewritten into the form
\begin{equation} \label{eq_PiPiSHtwisted}
\frac{1}{2}[\Pi,\Pi]_{S}(\xi,\eta,\zeta) = -(-1)^{(|\eta|+\ell)\ell} H(\Pi^{\sharp}(\xi), \Pi^{\sharp}(\eta), \Pi^{\sharp}(\zeta)),
\end{equation}
for all $\xi,\eta,\zeta \in \Omega^{1}_{\M}(M)$. Note that it is important that $\Pi \in \X^{2}_{\M}(M)$ for even $\ell$ and $\Pi \in \~\X^{2}_{\M}(M)$ for odd $\ell$. A derivation of this formula is straightforward but a bit tedious, hence we omit it here. We see that $\gr(\Pi^{\sharp}) \subseteq \E$ is a Dirac structure, iff $(\M,\Pi)$ defines an \textbf{$H$-twisted graded Poisson manifold of degree $\ell$}. For $H = 0$, it is just a graded Poisson manifold of degree $\ell$, see Example \ref{ex_gPoisson}. Recall that the bracket $\{\cdot,\cdot\}$ is obtained by $\{f,g\} = [[f,\Pi]_{S},g]_{S}$. Let 
\begin{equation}
\fJ(f,g,h) = \{f, \{g,h\}\} - \{\{f,g\},h\} - (-1)^{(|f|+\ell)(|g|+\ell)} \{g, \{f,h\}\}
\end{equation}
be the Jacobiator of the bracket $\{\cdot,\cdot\}$. Using Proposition \ref{tvrz_LASNB} and Proposition \ref{tvrz_LASNB2}, one can prove
\begin{equation} \label{eq_JacobiatorasSchouten}
\fJ(f,g,h) = \frac{1}{2} (-1)^{|f| + |g|(1+\ell) + |h|} [\Pi,\Pi]_{S}(\dr{f}, \dr{g}, \dr{h}). 
\end{equation}
Now, recall that for each $f \in \C^{\infty}_{\M}(M)$ the corresponding Hamiltonian vector field $X_{f} := [f,\Pi]_{S}$ can be written as $X_{f} = -(-1)^{|f|(1+\ell)} \Pi^{\sharp}(\dr{f})$. Combining (\ref{eq_PiPiSHtwisted}) and (\ref{eq_JacobiatorasSchouten}) then gives 
\begin{equation}
\fJ(f,g,h) = (-1)^{(|f|+|g|+|h|)\ell} H(X_{f},X_{g},X_{h}). 
\end{equation}
\end{example}
\section{Generalized complex structures}
In this section, we define a graded analogue of generalized complex structures, introduced in \cite{Gualtieri:2003dx}. 
Let $\E$ be a graded vector bundle over $\M$. Let $(\C_{\M}^{\infty})_{\Cn}$ denote the complexification of the structure sheaf $\C^{\infty}_{\M}$ of $\M$. Its sections over $U \in \Op(M)$ have the form $f + ig$, where $f,g \in \C^{\infty}_{\M}(U)$ satisfy $|f| = |g|$. The multiplication is defined in the usual way to make $(\C^{\infty}_{\M})_{\Cn}$ into the sheaf of \textit{complex} graded commutative associative algebras. 

Now, let $\Gamma_{\E_{\Cn}} := (\Gamma_{\E})_{\Cn}$ be the complexification of the sheaf of sections of $\E$. It follows that $\Gamma_{\E_{\Cn}}$ becomes a sheaf of \textit{complex} graded $(\C^{\infty}_{\M})_{\Cn}$-modules. We say that $\Gamma_{\E_{\Cn}}$ is a sheaf of sections of the \textit{complex} vector bundle $\E_{\Cn}$, called the \textbf{complexification of $\E$}. Sections in $\Gamma_{\E_{\Cn}}(M)$ are of the form $\psi + i \phi$, where $\psi,\phi \in \Gamma_{\E}(M)$ satisfy $|\psi| = |\phi|$, and for each $f + ig \in (\C^{\infty}_{\M})_{\Cn}(M)$, one has 
\begin{equation}
(f + ig)(\psi + i\phi) := f \psi - g \phi + i (f \phi + g \psi). 
\end{equation}
A theory of complex graded vector bundles and their subbundles is completely analogous to the real case, except that involved graded vector spaces are complex and all graded linear maps are $\Cn$-linear. Now, there is a canonical $(\C^{\infty}_{\M})_{\Cn}(M)$-antilinear map $\Sigma: \Gamma_{\E_{\Cn}}(M) \rightarrow \Gamma_{\E_{\Cn}}(M)$ of degree zero, defined for all $\psi + i \phi \in \Gamma_{\E_{\Cn}}(M)$ as 
\begin{equation}
\Sigma(\psi + i \phi) := \psi - i \phi,
\end{equation} 
$\Sigma$ is called the \textbf{complex conjugation on $\E_{\Cn}$}. Now, for any \textit{complex} subbundle $\cL \subseteq \E_{\Cn}$, let 
\begin{equation}
\Gamma_{\ol{\cL}}(U) := \Sigma|_{U}( \Gamma_{\cL}(U)).
\end{equation} 
It follows that $\Gamma_{\ol{\cL}}$ defines a sheaf of sections of a complex subbundle $\ol{\cL} \subseteq \E$ called the \textbf{complex conjugate of $\cL$}. One has $\grk_{\Cn}(\cL) = \grk_{\Cn}(\ol{\cL})$. 

Let $(\E,\rho,g_{\E},[\cdot,\cdot]_{\E})$ be a graded Courant algebroid of degree $\E$. All structures can be naturally complexified to obtain a \textit{complex} graded Courant algebroid $(\E_{\Cn}, \rho_{\Cn}, g_{\E_{\Cn}}, [\cdot,\cdot]_{\E_{\Cn}})$ of degree $\ell$, where $\rho_{\Cn}: \E_{\Cn} \rightarrow (T\M)_{\Cn}$ and $g_{\E_{\Cn}}: \E_{\Cn} \rightarrow (\E_{\Cn})^{\ast}$ are complex vector bundle morphisms and $[\cdot,\cdot]_{\E_{\Cn}}: \Gamma_{\E_{\Cn}}(M) \times \Gamma_{\E_{\Cn}}(M) \rightarrow \Gamma_{\E_{\Cn}}(M)$ is a $\Cn$-bilinear bracket. $g_{\E_{\Cn}}$ then induces a graded symmetric $\Cn$-bilinear form $\<\cdot,\cdot\>_{\E_{\Cn}}$.

\begin{definice}
Let $(\E,\rho,g_{\E},[\cdot,\cdot]_{\E})$ be a graded Courant algebroid of degree $\ell$. We say that a complex subbundle $\cL \subseteq \E_{\Cn}$ is a \textbf{generalized complex structure}, if 
\begin{enumerate}[(i)]
\item $\cL$ is isotropic with respect to $\<\cdot,\cdot\>_{\E_{\Cn}}$ and $\Gamma_{\cL}(M)$ is involutive with respect to $[\cdot,\cdot]_{\E_{\Cn}}$;
\item one can write $\E_{\Cn} = \cL \oplus \ol{\cL}$. 
\end{enumerate}
\end{definice}
\begin{rem}
Observe that contrary to the ordinary case, see e.g. Proposition 4.3 in \cite{Gualtieri:2003dx}, we do not assume that $\cL$ is a \textit{maximal} isotropic subbundle. However, let $(r_{j})_{j \in \Z} = \grk(\E) = \grk_{\Cn}(\E_{\Cn})$ and $(q_{j})_{j \in \Z} = \grk_{\Cn}(\cL)$. The condition (ii) forces $r_{j} = 2 q_{j}$ for each $j \in \Z$. Since $g_{\E}$ is an isomorphism, by Remark \ref{rem_nondegen} we have $r_{j} = r_{-(j+\ell)}$. Hence necessarily also $q_{j} = q_{-(j+\ell)}$. It follows that $\grk_{\Cn}(\cL) = \grk_{\Cn}(\cL^{\perp})$ and $\cL$ is automatically Lagrangian, hence maximal isotropic. 
\end{rem}
\begin{tvrz} \label{tvrz_Jgencomplex}
Let $(\E,\rho,g_{\E},[\cdot,\cdot]_{\E})$ be a graded Courant algebroid of degree $\ell$. Let $\cL \subseteq \E_{\Cn}$ be a generalized complex structure. Then it induces a degree zero vector bundle map $\J: \E \rightarrow \E$ having the following properties:
\begin{enumerate}[(i)]
\item $\J^{2} = -\1_{\E}$ and $\J$ is orthogonal with respect to $\<\cdot,\cdot\>_{\E}$;
\item The \textbf{Nijenhuis tensor} $N_{\J}$ defined for all $\psi,\psi' \in \Gamma_{\E}(M)$ by 
\begin{equation}
N_{\J}(\psi,\psi') := [\psi,\psi']_{\E} - [\J(\psi),\J(\psi')]_{\E} + \J([\psi,\J(\psi')]_{\E} + [\J(\psi),\psi']_{\E})
\end{equation}
vanishes identically. 
\item $\Gamma_{\cL}(U)$ coincides with the $+i$ eigenspace of $\J_{\Cn}|_{U}$ for each $U \in \Op(M)$. Consequently, $\Gamma_{\ol{\cL}}(U)$ coincides with the $-i$ eigenspace of $\J_{\Cn}|_{U}$ for each $U \in \Op(M)$. 
\end{enumerate}
\end{tvrz}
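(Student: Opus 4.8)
The plan is to read $\J$ off from the eigenbundle decomposition $\E_{\Cn} = \cL \oplus \ol{\cL}$ and then verify (i)--(iii), with essentially all of the content sitting in (ii). First I would construct $\J$. The direct sum $\E_{\Cn} = \cL \oplus \ol{\cL}$ of complex subbundles provides global (in fact sheaf-level) projections onto the two summands, so I may define a complex-linear degree zero vector bundle map $\J_{\Cn} : \E_{\Cn} \to \E_{\Cn}$ by letting it act as multiplication by $+i$ on $\Gamma_{\cL}$ and by $-i$ on $\Gamma_{\ol{\cL}}$; this is $\C^{\infty}_{\M}$-linear precisely because $\cL$ and $\ol{\cL}$ are subbundles. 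The decisive observation is that $\J_{\Cn}$ commutes with the complex conjugation $\Sigma$: since $\Sigma$ is antilinear and interchanges $\Gamma_{\cL}$ with $\Gamma_{\ol{\cL}}$, for $\psi \in \Gamma_{\cL}$ one has $\Sigma(\J_{\Cn}\psi) = \Sigma(i\psi) = -i\,\Sigma(\psi) = \J_{\Cn}(\Sigma\psi)$, and symmetrically on $\Gamma_{\ol{\cL}}$. A $\Cn$-linear map commuting with $\Sigma$ is the complexification of a unique real degree zero vector bundle map $\J : \E \to \E$, and property (iii) holds verbatim by construction, since over each $U \in \Op(M)$ the $+i$-eigenspace of $\J_{\Cn}|_{U}$ is exactly $\Gamma_{\cL}(U)$.

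Property (i) is then quick. The identity $\J^{2} = -\1_{\E}$ is immediate, because $\J_{\Cn}^{2}$ acts as $(\pm i)^{2} = -1$ on each summand. For orthogonality I would pass to the complexification, where $\<\cdot,\cdot\>_{\E_{\Cn}}$ is $\Cn$-bilinear, and decompose $\psi = \psi_{+} + \psi_{-}$ and $\psi' = \psi'_{+} + \psi'_{-}$ along $\E_{\Cn} = \cL \oplus \ol{\cL}$. Expanding $\<\J_{\Cn}\psi, \J_{\Cn}\psi'\>_{\E_{\Cn}}$ bilinearly, the like-type contributions (both arguments in $\cL$, or both in $\ol{\cL}$) vanish because $\cL$ is isotropic and $\ol{\cL}$ is likewise isotropic as the $\Sigma$-image of $\cL$, while the two cross terms carry the factor $i \cdot (-i) = 1$ and reassemble precisely into $\<\psi,\psi'\>_{\E_{\Cn}}$; restricting to real sections yields $\<\J\psi,\J\psi'\>_{\E} = \<\psi,\psi'\>_{\E}$. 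The only thing to watch is that the constant scalar $i$ passes through both the pairing and the bracket with no graded sign, exactly because those operations are $\Cn$-bilinear, so the degree $\ell$ of $\<\cdot,\cdot\>_{\E}$ never interferes with the $i$-arithmetic.

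The heart of the argument is (ii). Since $N_{\J}$ is assembled from the $\R$-bilinear bracket and the $\C^{\infty}_{\M}$-linear map $\J$, it is bi-additive and homogeneous over the constant scalars, hence extends $\Cn$-bilinearly to $N_{\J_{\Cn}}$ on $\Gamma_{\E_{\Cn}}(M)$; using the global splitting $\Gamma_{\E_{\Cn}}(M) = \Gamma_{\cL}(M) \oplus \Gamma_{\ol{\cL}}(M)$ it therefore suffices to check vanishing on the four combinations of sections of $\cL$ and $\ol{\cL}$. For $\psi,\psi' \in \Gamma_{\cL}$ a direct expansion gives $N_{\J_{\Cn}}(\psi,\psi') = 2[\psi,\psi']_{\E_{\Cn}} + 2i\,\J_{\Cn}([\psi,\psi']_{\E_{\Cn}})$, which vanishes exactly because $\Gamma_{\cL}$ is involutive, so that $[\psi,\psi']_{\E_{\Cn}}$ again lies in the $+i$-eigenbundle $\cL$. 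The case $\psi,\psi' \in \Gamma_{\ol{\cL}}$ is identical once one notes that $\ol{\cL}$ is involutive as well: $\Sigma$ is a homomorphism of $[\cdot,\cdot]_{\E_{\Cn}}$ (the latter being the complexification of the real bracket), so it carries the involutive $\Gamma_{\cL}$ to $\Gamma_{\ol{\cL}}$. The two mixed combinations vanish automatically, with no integrability input: the eigenvalue factors produce $[\psi,\J_{\Cn}\psi']_{\E_{\Cn}} + [\J_{\Cn}\psi,\psi']_{\E_{\Cn}} = (-i+i)[\psi,\psi']_{\E_{\Cn}} = 0$ together with $[\J_{\Cn}\psi,\J_{\Cn}\psi']_{\E_{\Cn}} = [\psi,\psi']_{\E_{\Cn}}$. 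Hence $N_{\J_{\Cn}} \equiv 0$, and therefore $N_{\J} \equiv 0$.

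I expect the main difficulty to be organizational rather than computational. One must be scrupulous that the descent $\J_{\Cn} \rightsquigarrow \J$ to a genuine real vector bundle map is legitimate and that the global module decomposition of sections is available, and one must resist introducing spurious signs: the whole argument works precisely because every nontrivial grading sign is confined to the bracket and the pairing and never collides with the scalar multiplications by $i$ that drive the computation. The one substantive identity is the evaluation on $\Gamma_{\cL} \times \Gamma_{\cL}$, where the vanishing of $N_{\J}$ is equivalent to involutivity of $\cL$; everything else is bookkeeping.
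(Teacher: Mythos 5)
Your proposal is correct and follows essentially the same route as the paper: define $\J_{\Cn}$ as $\pm i$ on the two summands of $\E_{\Cn} = \cL \oplus \ol{\cL}$, descend to a real $\J$ via commutation with $\Sigma$, and reduce $N_{\J} = 0$ to the involutivity of $\Gamma_{\cL}(M)$. The only (cosmetic) difference is in part (ii), where you extend $N_{\J}$ $\Cn$-bilinearly and check the four eigensectors, while the paper computes the $\ol{\cL}$-projection of $[\psi - i\J(\psi), \psi' - i\J(\psi')]_{\E_{\Cn}}$ for real $\psi,\psi'$ and finds it equal to $N_{\J}(\psi,\psi') + i\J(N_{\J}(\psi,\psi'))$; these are the same computation organized differently.
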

\begin{proof}
By assumption, one can decompose every $\psi \in \Gamma_{\E_{\Cn}}(M)$ as $\psi = \phi + \Sigma(\phi')$ for unique $\phi,\phi' \in \Gamma_{\cL}(M)$. Define $\J': \Gamma_{\E_{\Cn}}(M) \rightarrow \Gamma_{\E_{\Cn}}(M)$ as $\J'(\phi + \Sigma(\phi')) := i(\phi - \Sigma(\phi'))$. In other words, $\Gamma_{\cL}(M)$ and $\Gamma_{\ol{\cL}}(M)$ become the $+i$ and $-i$ eigenspaces of $\J'$, respectively. It is easy to see that
\begin{equation}
\J' \circ \Sigma = \Sigma \circ \J',
\end{equation}
whence $\J' = \J_{\Cn}$ for a unique degree zero $\C^{\infty}_{\M}(M)$-linear map $\J: \Gamma_{\E}(M) \rightarrow \Gamma_{\E}(M)$. $\J$ satisfies the property $(iii)$ by definition. Moreover, since clearly $\J'^{2} = -\1_{\E_{\Cn}}$ and $\J'$ is orthogonal with respect to $\<\cdot,\cdot\>_{\E_{\Cn}}$, the properties $(i)$ of $\J$ follow immediately. 

It follows from the construction of $\J$ that $\Gamma_{\cL}(M)$ is involutive with respect to $[\cdot,\cdot]_{\E_{\Cn}}$, iff 
\begin{equation}
[\psi - i\J(\psi), \psi' - i \J(\psi')]_{\E_{\Cn}} \in \Gamma_{\cL}(M),
\end{equation}
for all $\psi,\psi' \in \Gamma_{\E}(M)$. The projection of this expression to $\Gamma_{\ol{\cL}}(M)$ must vanish, that is
\begin{equation}
\begin{split}
0 = & \ (\1_{\E_{\Cn}} + i \J_{\Cn})( [\psi - i\J(\psi), \psi' - i \J(\psi')]_{\E_{\Cn}}) \\
= & \ N_{\J}(\psi,\psi') + i \J( N_{\J}(\psi,\psi')).
\end{split}
\end{equation}
We see that $\Gamma_{\cL}(M)$ is involutive with respect to $[\cdot,\cdot]_{\E_{\Cn}}$, iff $N_{\J}(\psi,\psi') = 0$ for all $\psi,\psi' \in \Gamma_{\E}(M)$. This proves the claim $(ii)$. 
\end{proof}
\begin{rem} \label{rem_gencomplex}
For ordinary Courant algebroids, every vector bundle map $\J: \E \rightarrow \E$ having the above properties $(i)$ and $(ii)$ determines the unique generalized complex structure $\cL \subseteq \E_{\Cn}$ by declaring $\cL$ to satisfy $(iii)$. For \emph{graded} Courant algebroids, it may happen that $\Gamma_{\cL}$ defined by
\begin{equation} \label{eq_gcLintermsofJ}
\Gamma_{\cL} = \ker(\J_{\Cn} - i \1_{\E_{\Cn}})
\end{equation} 
fails to define a complex subbundle $\cL$ of $\E_{\Cn}$. However, if one \textit{adds} the assumption that $\Gamma_{\cL}$ defines a complex subbundle of $\E_{\Cn}$, it is not difficult to argue that $\cL$ is a generalized complex structure based on the properties $(i)$ of $(ii)$ of $\J$. Note that this technical issue is closely related to the one described in Remark \ref{rem_noSerreswan}. 
\end{rem}
\begin{example} \label{ex_gcomplexsymplectic}
Let $(\E,\rho,g_{\E},[\cdot,\cdot]_{D})$ be the graded Courant algebroid of degree $\ell$ from Example \ref{ex_gDorfman} with $H = 0$. Let $\omega \in \Omega^{2}_{\M}(M)$ be a $2$-form of degree $-\ell$. Let $\omega^{\flat}: T\M \rightarrow T^{\ast}\M$ be the degree $\ell$ vector bundle map induced by $\omega$, that is 
\begin{equation}
[\omega^{\flat}(X)](Y) := \omega(X,Y),
\end{equation}
for all $X,Y \in \X_{\M}(M)$. $\omega^{\flat}$ can be viewed as a vector bundle map $\omega^{\flat}: T[\ell]\M \rightarrow T^{\ast}\M$ of degree zero. Suppose that $\omega$ is a \textbf{symplectic form}, that is $\omega^{\flat}$ is an isomorphism and $\dr{\omega} = 0$. 

For each $U \in \Op(M)$, let us define the following graded $(\C^{\infty}_{\M}(U))_{\Cn}$-submodule of $\Gamma_{\E_{\Cn}}(U)$:
\begin{equation}
\Gamma_{\cL}(U) := \{ (X, \omega^{\flat}|_{U}(Y)) + i(Y, -\omega^{\flat}|_{U}(X)) \; | \; X+iY \in (\X_{\M}(U))_{\Cn} \}
\end{equation}
It is straightforward to verify that $\Gamma_{\cL}$ defines a sheaf of sections of a complex subbundle $\cL \subseteq \E_{\Cn}$. By construction, this subbundle is isomorphic to $(T\M)_{\Cn}$. It is straightforward to verify that $\E_{\Cn} = \cL \oplus \ol{\cL}$ and $\cL \subseteq \cL^{\perp}$. Finally, its involutivity can be shown to be equivalent to $\dr{\omega} = 0$. In other words, $\cL$ defines a generalized complex structure. The corresponding vector bundle morphism $\J: \E \rightarrow \E$ takes the explicit form 
\begin{equation} \label{eq_Jforgencomplexsymplectic}
\J(X,\xi) = (-(\omega^{\flat})^{-1}(\xi), \omega^{\flat}(X)),
\end{equation}
for all $(X,\xi) \in \Gamma_{\E}(M)$. Note that for $H \neq 0$, $\cL$ is \textit{not} involutive for any $\omega$. 
\end{example}
\begin{example} 
Let $(\E,\rho,g_{\E},[\cdot,\cdot]_{D}^{H})$ be the graded Courant algebroid of degree $\ell$ from Example \ref{ex_gDorfman}. Let $J: T\M \rightarrow T\M$ be a degree zero vector bundle map. We say that $J$ is a \textbf{complex structure} on $\M$, if it has the following properties:
\begin{enumerate}[(i)]
\item $J^{2} = -\1_{T\M}$.
\item Let $J_{\Cn}: (T\M)_{\Cn} \rightarrow (T\M)_{\Cn}$ be the complexification of $J$, and define
\begin{equation}
\X^{1,0}_{\M} := \ker( J_{\Cn} - i \1_{(T\M)_{\Cn}}).
\end{equation} 
Then $\X^{1,0}_{\M}$ must define a sheaf of sections of a complex subbundle $T^{1,0}\hspace{-0.8mm}\M$ of $(T\M)_{\Cn}$, such that $\X^{1,0}_{\M}(M)$ is involutive with respect to the complexified graded commutator $[\cdot,\cdot]_{\Cn}$. 
\end{enumerate}
For ordinary manifolds, $\X^{1,0}_{\M}$ defines a subbundle for any $J$ satisfying (i). However, in the graded setting, this is no longer true. The involutivity of $\X^{1,0}_{\M}(M)$ is equivalent to 
\begin{equation}
0 = N_{J}(X,Y) = [X,Y] - [J(X),J(Y)] + J([X,J(Y)] + [J(X),Y]),
\end{equation}
for all $X,Y \in \X_{\M}(M)$. The complex conjugate to $\X^{1,0}_{\M}$ is $\X^{0,1}_{\M} = \ker(J_{\Cn} + i \1_{(TM)_{\Cn}})$, and let
\begin{equation}
\Omega^{1,0}_{\M} := \an( \X^{0,1}_{\M}), \; \; \Omega^{0,1}_{\M} := \an( \X^{1,0}_{\M}). 
\end{equation} 
It follows easily that $(T\M)_{\Cn} = \X^{1,0}_{\M} \oplus \X^{0,1}_{\M}$ and $(T^{\ast}\M)_{\Cn} = \Omega^{1,0}_{\M} \oplus \Omega^{0,1}_{\M}$. Let 
\begin{equation}
\Gamma_{\cL} := \X^{1,0}_{\M}[\ell] \oplus \Omega^{0,1}_{\M}.
\end{equation}
Now, the complexified Courant algebroid $\E_{\Cn}$ can be identified with $\E_{\Cn} = (T\M)_{\Cn}[\ell] \oplus (T^{\ast}\M)_{\Cn}$. The complexified fiber-wise metric and the bracket then take the same form as in Example \ref{ex_gDorfman}, except that all operations are replaced by their complexified versions. In particular, the bracket contains the complexification $H_{\Cn}$ of the $3$-form $H$. 

It is then easy to see that $\Gamma_{\cL}(M)$ is involutive, iff $\X^{1,0}_{\M}(M)$ is involutive with respect to $[\cdot,\cdot]_{\Cn}$ (this follows from the property (ii) of complex structures), and 
\begin{equation}
H_{\Cn}(X,Y,Z) = 0,
\end{equation}
for all $X,Y,Z \in \X_{\M}^{1,0}(M)$. In terms of $H$ and $J$, this condition can be rewritten as 
\begin{equation}
H(X,Y,Z) = H(J(X),J(Y),Z) + H(X,J(Y),J(Z)) + H(J(X),Y,J(Z)),
\end{equation}
for all $X,Y,Z \in \X_{\M}(M)$. Finally, the corresponding vector bundle map $\J: \E \rightarrow \E$ takes the form
\begin{equation}
\J(X,\xi) = (J(X), -J^{T}(\xi)),
\end{equation}
for all $(X,\xi) \in \Gamma_{\E}(M)$. Note that although we call $J$ a complex structure on $\M$, we do not claim that there is a graded version of the Newlander--Nirenberg theorem. 
\end{example}
\section{Differential graded Courant algebroids}
A grading of a space of global sections of a graded vector bundle $\E$ allows one to consider maps of a non-zero degree. Suppose $(\E,\rho,g_{\E},[\cdot,\cdot]_{\E})$ is a graded Courant algebroid of degree $\ell \in \Z$. 

In particular, we can consider a degree $1$ derivation of the bracket $[\cdot,\cdot]_{\E}$, that is an $\R$-linear map $\Delta: \Gamma_{\E}(M) \rightarrow \Gamma_{\E}(M)$ of degree $1$ satisfying 
\begin{equation} \label{eq_Deltaderivation}
\Delta([\psi,\psi']_{\E}) = [\Delta(\psi),\psi']_{\E} + (-1)^{|\psi|+\ell} [\psi, \Delta(\psi')]_{\E}, 
\end{equation}
for all $\psi,\psi' \in \Gamma_{\E}(M)$. However, $\Gamma_{\E}(M)$ is not just a graded vector space, but a graded $\C^{\infty}_{\M}(M)$-module. Moreover, the bracket $[\cdot,\cdot]_{\E}$ is not skew-symmetric. We thus have to make sure that the above condition is consistent with the axioms of a graded Courant algebroid. Taking this into consideration, one ends up with the following definition. 

\begin{definice} \label{def_delta}
Let $(\E,\rho,g_{\E},[\cdot,\cdot]_{\E})$ be a graded Courant algebroid of degree $\ell$. Let $\Delta: \Gamma_{\E}(M) \rightarrow \Gamma_{\E}(M)$ be a degree $1$ graded $\R$-linear map, such that:
\begin{enumerate}[(i)]
\item $\Delta$ squares to zero, that is $\Delta^{2} = 0$.
\item $\Delta$ is compatible with the graded $\C^{\infty}_{\M}(M)$-module structure on $\Gamma_{\E}(M)$, that is there exists a vector field $\ul{\Delta} \in \X_{\M}(M)$, such that 
\begin{equation} \label{eq_DeltaLeibniz}
\Delta(f\psi) = (-1)^{|f|} f \Delta(\psi) + \ul{\Delta}(f) \psi,
\end{equation}
 for all $f \in \C^{\infty}_{\M}(M)$ and $\psi \in \Gamma_{\E}(M)$.
\item $\Delta$ is compatible with the fiber-wise metric, that is for all $\psi,\psi \in \Gamma_{\E}(M)$, one has 
\begin{equation} \label{eq_Deltametric}
\ul{\Delta} \<\psi,\psi'\>_{\E} = \< \Delta(\psi), \psi'\>_{\E} + (-1)^{|\psi|+\ell} \<\psi, \Delta(\psi')\>_{\E},
\end{equation}
\item $\Delta$ is compatible with the anchor, that is for all $\psi \in \Gamma_{\E}(M)$, one has 
\begin{equation} \label{eq_Deltaanchor}
\rho( \Delta(\psi)) = (-1)^{\ell} [\ul{\Delta}, \rho(\psi)],
\end{equation}
\item $\Delta$ is a graded derivation of the bracket $[\cdot,\cdot]_{\E}$, that is it satisfies (\ref{eq_Deltaderivation}) for all $\psi,\psi' \in \Gamma_{\E}(M)$. 
\end{enumerate}
We say that $\Delta$ is a \textbf{differential on $\E$}, and $(\E,\rho,g_{\E},[\cdot,\cdot]_{\E},\Delta)$ is called a \textbf{differential graded Courant algebroid of degree $\ell$}. 
\end{definice}
\begin{rem}
For any $\Delta$ satisfying (\ref{eq_Deltametric}), one can show that (\ref{eq_Deltaanchor}) is equivalent to 
\begin{equation}
\Delta \circ \dD + \dD \circ \ul{\Delta} = 0.
\end{equation}
Note that the axioms above can be modified to work for $\Delta$ of any given degree $|\Delta|$. However, to keeps things simple, we only consider $|\Delta| = 1$. 
\end{rem}
\begin{example} \label{ex_homologicalsection}
Let $\phi \in \Gamma_{\E}(M)$ be a given section of degree $|\phi| = 1 - \ell$. Then $\Delta = [\phi,\cdot]_{\E}$ is a graded $\R$-linear map of degree $1$. Let $\ul{\Delta} := \hat{\rho}(\phi) \equiv (-1)^{\ell} \rho(\phi)$. It is easy to see that (\ref{eq_Deltaderivation} - \ref{eq_Deltaanchor}) follow immediately from the axioms (c1)-(c4) in Definition \ref{def_gCourant} and the equation (\ref{eq_rhoishom}). 

It only remains to verify when $\Delta^{2} = 0$. Using the graded Jacobi identity, one finds
\begin{equation} \label{eq_Deltasquareforphi}
\Delta^{2}(\psi) = \frac{1}{2} [[\phi,\phi]_{\E}, \psi]_{\E}, 
\end{equation}
for all $\psi \in \Gamma_{\E}(M)$. This does not necessarily imply that $[\phi,\phi]_{\E} = 0$. For example, it follows from (\ref{eq_bracketswithD}) that it suffices to ensure that $[\phi,\phi]_{\E} = \dD{f}$ for some $f \in \C^{\infty}_{\M}(M)$ with $|f| = 2 - \ell$. We say that $\phi$ is a \textbf{homological section}, if it satisfies $[[\phi,\phi]_{\E},\psi]_{\E} = 0$ for all $\psi \in \Gamma_{\E}(M)$. 

Note that in view of the above remark, (\ref{eq_Deltasquareforphi}) is true only if $|\Delta|$ is odd. 
\end{example}

\begin{example} \label{ex_Deltaforexact}
Let $(\E,\rho,g_{\E},[\cdot,\cdot]_{D}^{H})$ be a graded Courant algebroid of degree $\ell$ from Example \ref{ex_gDorfman}. Suppose $\Delta$ makes it into a differential graded Courant algebroid. Let $Q := (-1)^{\ell} \ul{\Delta}$. It follows from the properties (ii)-(iv) of Definition \ref{def_delta} that $\Delta$ has to have the explicit form
\begin{equation} \label{eq_Deltaforexact}
\Delta(X,\xi) = ([Q,X], (-1)^{\ell}\{ \Li{Q}\xi + \theta^{\flat}(X) + H(Q,X,\cdot)\}),
\end{equation}
for all $(X,\xi) \in \Gamma_{\E}(M)$. The vector bundle map $\theta^{\flat}: T\M \rightarrow T^{\ast}\M$ is induced by a $2$-form $\theta \in \Omega^{2}_{\M}(M)$ of degree $|\theta| = 1 - \ell$, that is $[\theta^{\flat}(X)](Y) := \theta(X,Y)$ for all $X,Y \in \X_{\M}(M)$. The equation (\ref{eq_Deltaderivation}) is then equivalent to $\dr{\theta} = 0$. 

Now, observe that for $\ell \neq 1$, $\theta = \dr{\vartheta}$ for some $\vartheta \in \Omega^{1}_{\M}(M)$ of degree $|\vartheta| = 1 - \ell$. See \S 6.5 in \cite{vysoky2021global} for details. In this case, (\ref{eq_Deltaforexact}) can be for all $(X,\xi) \in \Gamma_{\E}(M)$ rewritten simply as 
\begin{equation}
\Delta(X,\xi) = [(Q,\vartheta), (X,\xi)]_{D}^{H}. 
\end{equation}
In other words, every possible $\Delta$ takes the form as in Example \ref{ex_homologicalsection} for a homological section $\phi = (Q,\vartheta)$. However, for $\ell = 1$, $\theta$ is in general not exact. 

It remains to examine the condition $\Delta^{2} = 0$. First, one finds the equation $[Q,[Q,X]] = 0$ for all $X \in \X_{\M}(M)$. This can be rewritten as $[[Q,Q],X] = 0$ for all $X \in \X_{\M}(M)$ and, similarly to ordinary manifolds, this is equivalent $[Q,Q] = 0$. Having this in mind, the only non-trivial remaining condition becomes
\begin{equation}
\Li{Q}( \theta + i_{Q}H) = 0.
\end{equation}
Taking into account that $\theta$ and $H$ are closed, this can be further rewritten as 
\begin{equation}
\dr( i_{Q}\theta + \frac{1}{2} i_{Q}i_{Q}H) = 0. 
\end{equation}
Note that if $\ell \neq 2$, this is equivalent to $i_{Q}\theta + \frac{1}{2} i_{Q}i_{Q}H = \dr{f}$ for some $f \in \C^{\infty}_{\M}(M)$ of degree $|f| = 2 - \ell$. We see that in any case, $\M$ is equipped with a \textbf{homological vector field} $Q$, thus making $(\M,Q)$ into a differential graded manifold. 
\end{example}

Having a notion of a differential $\Delta$, it makes sense to impose some its compatibility properties with additional structures on $\E$. 
\begin{definice}
Let $(\E,\rho,g_{\E},[\cdot,\cdot]_{\E},\Delta)$ be a differential graded Courant algebroid of degree $\ell$. Let $\cL \subseteq \E$ be a Dirac structure on $\E$. We say that $\cL$ is \textbf{$\Delta$-compatible}, if $\Delta( \Gamma_{\cL}(M)) \subseteq \Gamma_{\cL}(M)$. 
\end{definice}
\begin{example}
Consider the scenario of Example \ref{ex_Deltaforexact}. In particular, $\Delta$ is given by (\ref{eq_Deltaforexact}), where $Q \in \X_{\M}(M)$ is a degree $|Q| = 1$ homological vector field and $\theta \in \Omega^{2}_{\M}(M)$ is a closed $2$-form of degree $|\theta| = 1 - \ell$. Let $\cL = \gr(\Pi^{\sharp})$ as in Example \ref{ex_PoissonasDirac}. In particular, $\Pi$ is an $H$-twisted graded Poisson structure of degree $\ell$. 

The condition $\Delta( \Gamma_{\gr(\Pi^{\sharp})}(M)) \subseteq \Gamma_{\gr(\Pi^{\sharp})}(M)$ is equivalent to 
\begin{equation}
(\Li{Q}\Pi)(\xi,\eta) + (-1)^{\ell(1+|\xi|)} (\theta + i_{Q}H)(\Pi^{\sharp}(\xi), \Pi^{\sharp}(\eta)) = 0.
\end{equation}
In terms of the Poisson bracket $\{f,g\} = [[f,\Pi]_{S},g]_{S}$ and Hamiltonian vector fields $X_{f} = [f,\Pi]_{S}$, this can be rewritten as 
\begin{equation}
Q\{f,g\} = \{Q(f),g\} + (-1)^{|f|+\ell} \{f, Q(g)\} + (-1)^{\ell(1+|f|+|g|)} (\theta + i_{Q}H)(X_{f},X_{g}),
\end{equation}
for all $f,g \in \C^{\infty}_{\M}(M)$. We see that this example provides a more general framework for so called QP-manifolds \cite{Schwarz:1992nx, Ikeda:2011ax}, in particular for their $H$-twisted version. 
\end{example}
A similar definition can be cooked for generalized complex structures.
\begin{definice}
Let $(\E,\rho,g_{\E},[\cdot,\cdot]_{\E},\Delta)$ be a differential graded Courant algebroid of degree $\ell$. Let $\cL \subseteq \E_{\Cn}$ be a generalized complex structure on $\E$. We say that $\cL$ is \textbf{$\Delta$-compatible}, if $\Delta_{\Cn}( \Gamma_{\cL}(M)) \subseteq \Gamma_{\cL}(M)$, where $\Delta_{\Cn}: \Gamma_{\E_{\Cn}}(M) \rightarrow \Gamma_{\E_{\Cn}}(M)$ is a complexification of $\Delta$. 

If $\J: \E \rightarrow \E$ corresponds to $\cL$, the above condition is equivalent to $\Delta \circ \J = \J \circ \Delta$. 
\end{definice}
\begin{example}
Suppose we have a setting as in Example \ref{ex_Deltaforexact}. In particular, we have $\Delta$ given by (\ref{eq_Deltaforexact}) for a homological vector field $Q$ of degree $|Q|=1$ and closed $2$-form $\theta$ of degree $|\theta| = 1 - \ell$. Let $\cL \subseteq \E_{\Cn}$ be the generalized complex structure from Example \ref{ex_gcomplexsymplectic}, that is the one determined by a symplectic form $\omega \in \Omega^{2}_{\M}(M)$ of degree $-\ell$. $\J$ is then given by (\ref{eq_Jforgencomplexsymplectic}). Note that we have to assume that $H = 0$. 

The condition $\Delta \circ \J = \J \circ \Delta$ immediately forces $\theta = 0$ and $\Li{Q}(\omega) = 0$. We see that $\cL$ is $\Delta$-compatible, if and only if $Q$ is symplectic with respect to $\omega$ and $\theta = 0$. In other word, $(\M,Q,\omega)$ is a \textbf{differential graded symplectic manifold} (sometimes also called a QP-manifold).
\end{example}
\section{Graded Lie bialgebroids}
Standard Courant algebroids originally rose to prominence as Manin triples for Lie bialgebroids \cite{liu1997manin}. In this section, we show how this procedure can be generalized to the graded setting. First, let us introduce a notion of graded Lie algebroid. 
\begin{definice}
$(\A, \da, [\cdot,\cdot]_{\A})$ is a called a \textbf{graded Lie algebroid of degree $\ell$}, if
\begin{enumerate}[(i)]
\item $\A$ is a graded vector bundle over a graded manifold $\M$;
\item $\da: \A \rightarrow T\M$ is a vector bundle map of degree $\ell$ called the anchor;
\item $[\cdot,\cdot]_{\A}: \Gamma_{\A}(M) \times \Gamma_{\A}(M) \rightarrow \Gamma_{\A}(M)$ is an $\R$-bilinear map of degree $\ell$, that is $|[X,Y]_{\A}| = |X| + |Y| + \ell$ for all $X,Y \in \Gamma_{\A}(M)$. 
\end{enumerate}
Moreover, the operations are subject to the following set of axioms. All conditions have to hold for all involved sections and functions. Let $\hat{\da}(X) := (-1)^{(|X|+\ell)\ell} \da(X)$. 
\begin{enumerate}[(c1)]
\item The bracket satisfies the \textbf{graded Leibniz rule} 
\begin{equation} \label{eq_LALeibniz}
[X, fY]_{\A} = \hat{\da}(X)(f) Y + (-1)^{|f|(|X|+\ell)} f [X,Y]_{\A}.
\end{equation}
\item The bracket is graded skew-symmetric, that is 
\begin{equation}
[X,Y]_{\A} = - (-1)^{(|X|+\ell)(|Y|+\ell)} [Y,X]_{\A}. 
\end{equation}
\item The bracket satisfies the \textbf{graded Jacobi identity}
\begin{equation} \label{eq_LAgJI}
[X,[Y,Z]_{\A}]_{\A} = [[X,Y]_{\A}, Z]_{\A} + (-1)^{(|X|+\ell)(|Y|+\ell)} [Y, [X, Z]_{\A}]_{\A}.
\end{equation}
\end{enumerate}
In other words, $(\Gamma_{\A}(M), [\cdot,\cdot]_{\A})$ becomes a graded Lie algebra of degree $\ell$. 
\end{definice}
Similarly to graded Courant algebroids, see Proposition \ref{tvrz_CAconsequences}, one can use (\ref{eq_LALeibniz}, \ref{eq_LAgJI}) to prove 
\begin{equation}
\da( [X,Y]_{\A}) = [\da(X),\da(Y)],
\end{equation}
for all $X,Y \in \Gamma_{\A}(M)$. 
\begin{example} \label{ex_standardgLA}
A standard example of a graded Lie algebroid of degree $\ell$ is $(T[\ell]\M, \1_{T\M}, [\cdot,\cdot])$, where the identity is viewed as a vector bundle map $\1_{T\M}: T[\ell]\M \rightarrow T\M$ of degree $\ell$, and $[\cdot,\cdot]$ is the graded commutator (\ref{eq_gcommutator}) of vector fields. 
\end{example}
Now, every graded Lie algebroid induces a differential $\dr_{\A}$ and two different Schouten-Nijenhuis brackets $[\cdot,\cdot]_{\A}$. In order to save space, we have decided to move their definitions and basic properties into the appendix \ref{sec_appendixA}. One can now use them to define a concept of a graded Lie bialgebroid. This generalizes the notion introduced in \cite{mackenzie1994} and it uses the approach taken in \cite{kosmann1995exact}.
\begin{definice}
Let $(\A,\da,[\cdot,\cdot]_{\A})$ be a graded Lie algebroid of degree $\ell$ and $(\A^{\ast}, \da', [\cdot,\cdot]_{\A^{\ast}})$ a graded Lie algebroid of degree $\ell'$, where $\A^{\ast}$ is the dual of $\A$. 

We say that $(\A,\A^{\ast})$ is a \textbf{graded Lie bialgebroid of bidegree $(\ell,\ell')$} if the induced differential $\dr_{\A}$ is a graded derivation of the Schouten-Nijenhuis bracket $[\cdot,\cdot]_{\A^{\ast}}$. In other words:
\begin{enumerate}[(i)]
\item Suppose $\ell$ is even. For all $\xi \in \Omega^{p}_{[\A]}(M) \cong \X^{p}_{[\A]}(M)$ and $\eta \in \Omega^{q}_{[\A]}(M) \cong \X^{q}_{[\A]}(M)$, one has 
\begin{equation} \label{eq_GLBAduality_even}
\dr_{\A} [\xi,\eta]_{\A} = [ \dr_{\A}\xi, \eta]_{\A^{\ast}} + (-1)^{|X|+\ell'+p-1} [\xi, \dr_{\A}\eta]_{\A^{\ast}}.
\end{equation}
\item Suppose $\ell$ is odd. For all $\xi \in \~\Omega^{p}_{[\A]}(M) \cong \~\X^{p}_{[\A]}(M)$ and $\eta \in \~\Omega^{q}_{[\A]}(M) \cong \~\X^{q}_{[\A]}(M)$, one has 
\begin{equation} \label{eq_GLBAduality_odd}
\dr_{\A} [\xi,\eta]_{\A} = [ \dr_{\A}\xi, \eta]_{\A^{\ast}} + (-1)^{|X|+\ell'} [\xi, \dr_{\A}\eta]_{\A^{\ast}}.
\end{equation}
\end{enumerate}
\end{definice}
Since $\dr_{\A}$ satisfies the graded Leibniz rule, it suffices to verify (\ref{eq_GLBAduality_even}) and (\ref{eq_GLBAduality_odd}) for $p,q \in \{0,1\}$. Let us now summarize those conditions.
\begin{tvrz}
The condition (\ref{eq_GLBAduality_even}) (for even $\ell$) and (\ref{eq_GLBAduality_odd}) (for odd $\ell$) is equivalent to the following set of conditions:
\begin{enumerate}[(i)]
\item For every $f \in \C^{\infty}_{\M}(M)$, one has 
\begin{equation} \label{eq_GLBAduality00}
\hat{\da}(\dr_{\A^{\ast}}f) - (-1)^{(\ell+1)(\ell'+1)} \hat{\da}'( \dr_{\A}f) = 0. 
\end{equation}
\item For every $f \in \C^{\infty}_{\M}(M)$ and $\xi \in \Omega^{1}_{[\A]}(M)$, one has 
\begin{equation} \label{eq_GLBAduality01}
\Li{\dr_{\A^{\ast}}f}^{\A}(\xi) - (-1)^{(\ell+1)(\ell'+1)} [\dr_{\A}f,\xi]_{\A^{\ast}} = 0.
\end{equation}
\item For every $\xi,\eta \in \Omega^{1}_{[\A]}(M)$, one has 
\begin{equation} \label{eq_GLBAduality11}
\dr_{\A} [\xi,\eta]_{\A^{\ast}} = (-1)^{|\xi|+\ell'} \Li{\xi}^{\A^{\ast}}( \dr_{\A}\eta) - (-1)^{(|\eta|+\ell')(|\xi|+\ell'+1)} \Li{\eta}^{\A^{\ast}}( \dr_{\A}\xi). 
\end{equation}
\end{enumerate}
\end{tvrz}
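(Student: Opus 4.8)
The plan is to prove the equivalence in two stages: first reduce the derivation identity to its values on algebra generators, and then unwind those values into the three displayed conditions.

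First I would introduce the \emph{defect}
\begin{equation}
\Phi(\xi,\eta) := \dr_{\A}[\xi,\eta]_{\A^{\ast}} - [\dr_{\A}\xi,\eta]_{\A^{\ast}} - (-1)^{\star}[\xi,\dr_{\A}\eta]_{\A^{\ast}},
\end{equation}
where $(-1)^{\star}$ is the sign appearing in (\ref{eq_GLBAduality_even}) (even $\ell$) or (\ref{eq_GLBAduality_odd}) (odd $\ell$), so that the bialgebroid condition is exactly $\Phi \equiv 0$. The key structural input is that $\dr_{\A}$ is a degree $1$ graded derivation of the product on $\Omega^{\bullet}_{[\A]}$ (resp. $\~\Omega^{\bullet}_{[\A]}$ for odd $\ell$), while $[\cdot,\cdot]_{\A^{\ast}}$ is a graded biderivation of that same product, i.e. the Gerstenhaber--Leibniz rule for the Schouten--Nijenhuis bracket recalled in Appendix \ref{sec_appendixA}. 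Composing these two Leibniz rules and collecting terms shows that $\Phi$ itself obeys a Leibniz rule in each slot, schematically $\Phi(\xi\cdot\xi',\eta) = \pm\, \xi\cdot\Phi(\xi',\eta) \pm \Phi(\xi,\eta)\cdot\xi'$, and likewise in the second argument. Since $\Omega^{\bullet}_{[\A]}$ is generated as a graded-commutative algebra by its degree $0$ part $\C^{\infty}_{\M}(M)$ and its degree $1$ part $\Omega^{1}_{[\A]}(M)$, a biderivation vanishes identically as soon as it vanishes on generators; this is precisely the reduction to $p,q\in\{0,1\}$ asserted before the statement.

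Next I would eliminate the case $(p,q)=(1,0)$. Graded skew-symmetry of $[\cdot,\cdot]_{\A^{\ast}}$ together with the symmetry of the derivation identity under interchanging its two arguments shows that $\Phi(\xi,\eta)=0$ is equivalent, up to an overall nonzero sign, to $\Phi(\eta,\xi)=0$; hence $(1,0)$ follows from $(0,1)$. This leaves exactly the three independent generator cases $(0,0)$, $(0,1)$ and $(1,1)$, which I expect to produce (\ref{eq_GLBAduality00}), (\ref{eq_GLBAduality01}) and (\ref{eq_GLBAduality11}) respectively. For $(0,0)$, with $f,g\in\C^{\infty}_{\M}(M)$ one has $[f,g]_{\A^{\ast}}=0$, so the identity collapses to a relation between $[\dr_{\A}f,g]_{\A^{\ast}}$ and $[\dr_{\A}g,f]_{\A^{\ast}}$; rewriting $[\beta,h]_{\A^{\ast}}=\hat{\da}'(\beta)(h)$ and using the pairing identities $\hat{\da}'(\dr_{\A}f)(g)=\<\dr_{\A^{\ast}}g,\dr_{\A}f\>$ and $\hat{\da}(\dr_{\A^{\ast}}f)(g)=\<\dr_{\A}g,\dr_{\A^{\ast}}f\>$ turns it into (\ref{eq_GLBAduality00}). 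For $(0,1)$, taking $f$ a function and $\xi\in\Omega^{1}_{[\A]}(M)$, I would expand $\dr_{\A}[f,\xi]_{\A^{\ast}}$ and the two bracket terms, identify $[\dr_{\A}f,\xi]_{\A^{\ast}}$ with the $\A^{\ast}$-Lie derivative term, and read off (\ref{eq_GLBAduality01}).

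The genuinely laborious step, and the one I expect to be the main obstacle, is the case $(1,1)$ with $\xi,\eta\in\Omega^{1}_{[\A]}(M)$. Here $[\xi,\eta]_{\A^{\ast}}$ is the underlying $\A^{\ast}$-Lie bracket of sections, $\dr_{\A}[\xi,\eta]_{\A^{\ast}}$ is a $2$-form, and the right-hand side must be massaged using the Cartan calculus for $\A^{\ast}$ (the relation $\Li{\xi}^{\A^{\ast}}=\dr_{\A^{\ast}}i_{\xi}+i_{\xi}\dr_{\A^{\ast}}$ and the graded commutation of interior products with $\dr_{\A}$) until it matches (\ref{eq_GLBAduality11}). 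Throughout, the recurring difficulty is sign management: the signs depend jointly on $\ell$, $\ell'$ and the parities of the inputs, and one must keep the two conventions consistent, namely $\Omega^{\bullet}_{[\A]}\cong\X^{\bullet}_{[\A]}$ (skew, even $\ell$) versus $\~\Omega^{\bullet}_{[\A]}\cong\~\X^{\bullet}_{[\A]}$ (symmetric, odd $\ell$). The argument is otherwise identical in the two parities, only the Gerstenhaber signs and the identifications differing, so I would carry out the even case in detail and indicate the sign changes for the odd case.
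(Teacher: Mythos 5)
Your proposal is correct and follows essentially the same route as the paper: the paper likewise reduces the derivation identity to generators $p,q\in\{0,1\}$ via the Leibniz rules for $\dr_{\A}$ and $[\cdot,\cdot]_{\A^{\ast}}$ (stated just before the proposition) and then reads off the three conditions from the definitions, with the $(1,0)$ case absorbed by graded skew-symmetry exactly as you argue. Your write-up merely makes explicit (via the defect $\Phi$ and the pairing identities for $\dr_{\A}$, $\dr_{\A^{\ast}}$) what the paper dismisses as ``follows immediately from definitions.''
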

\begin{proof}
This follows immediately from definitions. Observe that (\ref{eq_GLBAduality11}) is the equation of objects in $\Omega^{2}_{[\A]}(M)$ for even $\ell$, and in $\~\Omega^{2}_{[\A]}(M)$ for odd $\ell$. 
\end{proof}
Similarly to the ordinary case, one obtains the following important observation.
\begin{tvrz} \label{tvrz_gLBAdual}
Let $(\A,\A^{\ast})$ be a graded Lie bialgebroid of bidegree $(\ell,\ell')$. 

Then $(\A^{\ast},\A)$ is a graded Lie bialgebroid of bidegree $(\ell',\ell)$. 
\end{tvrz}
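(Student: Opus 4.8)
The plan is to exploit the characterization of the bialgebroid condition furnished by the preceding Proposition. The data of $(\A^{\ast},\A)$ as a candidate graded Lie bialgebroid of bidegree $(\ell',\ell)$ consists of exactly the two graded Lie algebroid structures we already possess, namely $(\A^{\ast},\da',[\cdot,\cdot]_{\A^{\ast}})$ of degree $\ell'$ and $(\A,\da,[\cdot,\cdot]_{\A})$ of degree $\ell$ (recall $(\A^{\ast})^{\ast}=\A$). Hence the only thing to verify is the single compatibility requirement, that the differential $\dr_{\A^{\ast}}$ induced by $\A^{\ast}$ is a graded derivation of the Schouten--Nijenhuis bracket $[\cdot,\cdot]_{\A}$. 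Applying the preceding Proposition to the pair $(\A^{\ast},\A)$, this is equivalent to the three conditions obtained from (\ref{eq_GLBAduality00}), (\ref{eq_GLBAduality01}) and (\ref{eq_GLBAduality11}) by the substitutions $\A\leftrightarrow\A^{\ast}$, $\da\leftrightarrow\da'$, $\ell\leftrightarrow\ell'$, $\dr_{\A}\leftrightarrow\dr_{\A^{\ast}}$ and $\Li{\cdot}^{\A}\leftrightarrow\Li{\cdot}^{\A^{\ast}}$; call these dual conditions $(\mathrm{i}')$, $(\mathrm{ii}')$, $(\mathrm{iii}')$. Since $(\A,\A^{\ast})$ is a bialgebroid by hypothesis, (\ref{eq_GLBAduality00})--(\ref{eq_GLBAduality11}) hold, and the task reduces to deriving $(\mathrm{i}')$--$(\mathrm{iii}')$ from them.

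First I would dispatch the zeroth-order condition. Condition $(\mathrm{i}')$ reads $\hat{\da}'(\dr_{\A}f)-(-1)^{(\ell'+1)(\ell+1)}\hat{\da}(\dr_{\A^{\ast}}f)=0$, and since the sign $(-1)^{(\ell+1)(\ell'+1)}$ is symmetric in $\ell$ and $\ell'$, this is literally (\ref{eq_GLBAduality00}) after multiplying through by that sign and interchanging the two summands. Thus $(\mathrm{i}')$ is self-dual and holds for free. For the mixed conditions $(\mathrm{ii}')$ and $(\mathrm{iii}')$ the operations genuinely change—an $\A$-Lie derivative and the $\A$-bracket are traded for an $\A^{\ast}$-Lie derivative and the $\A^{\ast}$-bracket—so they are not mere rearrangements. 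My strategy here is to test the form-valued identities against the appropriate sections by means of the non-degenerate duality pairing between $\A$ and $\A^{\ast}$, reducing everything to scalar identities, and to convert $\A$-operations into $\A^{\ast}$-operations using the Cartan calculus from Appendix \ref{sec_appendixA}: the Cartan formula $\Li{\cdot}=[i_{\cdot},\dr]$ for each algebroid, the compatibility of $\dr_{\A}$ and $\dr_{\A^{\ast}}$ with contractions, and the already-established relation (\ref{eq_GLBAduality01}). Tracking the Koszul signs carefully, $(\mathrm{ii})$ then transforms into $(\mathrm{ii}')$, and I would run the even and odd $\ell$ cases (symmetric versus skew Schouten brackets) in parallel.

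The main obstacle is the top-order condition $(\mathrm{iii}')$, i.e. showing that $\dr_{\A^{\ast}}[X,Y]_{\A}$ admits the prescribed expression in terms of the $\A^{\ast}$-Lie derivatives of $\dr_{\A^{\ast}}X$ and $\dr_{\A^{\ast}}Y$ for $X,Y\in\Gamma_{\A}(M)$. This is the genuine content of the statement and mirrors the classical Lie-bialgebroid computation. I would derive it by repeatedly invoking the graded Jacobi identities (\ref{eq_LAgJI}) for both $\A$ and $\A^{\ast}$, the graded Leibniz rule (\ref{eq_LALeibniz}), and—crucially—the hypothesis in the form (\ref{eq_GLBAduality11}), after pairing against generic sections so that every term becomes a function and the symmetry of the defining condition becomes manifest. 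The conceptual reason the computation closes is that, in the big-bracket (graded symplectic) picture, both the original and the dual compatibility are encoded by the single symmetric identity $\{\mu,\gamma\}=0$ for the structure functions $\mu,\gamma$ of $\A,\A^{\ast}$; the explicit verification above is just the component form of that symmetry, and the only real work is confirming that the Koszul signs assemble correctly for all parities of $\ell$ and $\ell'$.
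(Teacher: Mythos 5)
Your proposal follows essentially the same route as the paper: note that condition (\ref{eq_GLBAduality00}) is manifestly symmetric under $(\A,\ell)\leftrightarrow(\A^{\ast},\ell')$, then obtain the dual versions of (\ref{eq_GLBAduality01}) and (\ref{eq_GLBAduality11}) by evaluating them against sections of the dual bundle and rewriting the resulting scalar identities with the help of the lower-order conditions and the Cartan calculus. The paper's own proof is an equally schematic sketch (it explicitly declines to carry out the sign bookkeeping), so your plan matches it in both structure and level of detail; the big-bracket remark at the end is a reasonable conceptual gloss but is not part of the paper's argument.
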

\begin{proof}
The condition (\ref{eq_GLBAduality00}) is manifestly symmetric in $(\A,\ell) \leftrightarrow (\A^{\ast},\ell')$. The evaluation of (\ref{eq_GLBAduality01}) on $X \in \Gamma_{\A}(M)$ and using (\ref{eq_GLBAduality00}) gives the condition $(\ref{eq_GLBAduality01})$ for $(\A^{\ast},\A)$ evaluated on $\xi \in \Gamma_{\A^{\ast}}(M)$. Finally, the evaluation of (\ref{eq_GLBAduality11}) on $X,Y \in \Gamma_{\A}(M)$ and using (\ref{eq_GLBAduality00}, \ref{eq_GLBAduality01}) gives the condition (\ref{eq_GLBAduality11}) for $(\A^{\ast},\A)$ evaluated on $\xi,\eta \in \Gamma_{\A^{\ast}}(M)$. This is an analogue of Theorem 3.10 in \cite{mackenzie1994}, except a lot of signs is involved. We \textit{do not} encourage the reader to verify this statement. 
\end{proof}
It turns out a graded Lie bialgebroid can be used to construct a graded Courant algebroid.
\begin{theorem} \label{thm_doubleofgLBA}
Let $(\A,\A^{\ast})$ be a graded Lie bialgebroid of bidegree $(\ell,\ell')$ over a graded manifold $\M$. Consider the following data:
\begin{enumerate}[(i)]
\item Let $\E := \A[\ell'] \oplus \A^{\ast}[\ell]$. 
\item Define $\rho: \E \rightarrow T\M$ as $\rho(X,\xi) := \da(X) + \da'(\xi)$ for all $(X,\xi) \in \Gamma_{\E}(M)$. 
\item Let $g_{\E}: \E \rightarrow \E^{\ast}$ be for any $(X,\xi) \in \Gamma_{\E}(M)$ given by $g_{\E}(X,\xi) := (\xi, (-1)^{\ell \ell'} X)$, where we use the identification $\E^{\ast} \cong \A^{\ast}[-\ell'] \oplus \A[-\ell]$. 
\item Consider the $\R$-bilinear bracket 
\begin{equation}
\begin{split}
[(X,\xi),(Y,\eta)]_{\E} = \big( & [X,Y]_{\A} + (-1)^{(|X|+\ell)(\ell+\ell')} \Li{\xi}^{\A^{\ast}} Y - (-1)^{|X|+\ell'} (\dr_{\A^{\ast}}X)(\eta,\cdot), \\
& [\xi,\eta]_{\A^{\ast}} + (-1)^{(|X|+\ell)(\ell+\ell')} \Li{X}^{\A} \eta - (-1)^{|X|+\ell'} (\dr_{\A}\xi)(Y,\cdot) \big).
\end{split}
\end{equation}
for all $(X,\xi),(Y,\eta) \in \Gamma_{\E}(M)$. 
\end{enumerate}
Then $(\E,\rho,g_{\E},[\cdot,\cdot]_{\E})$ becomes a graded Courant algebroid of degree $\ell + \ell'$ called a \textbf{double of a Lie bialgebroid $(\A,\A^{\ast})$}. Both $\A[\ell']$ and $\A^{\ast}[\ell]$ are Dirac structures on $\E$. 
\end{theorem}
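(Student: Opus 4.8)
The plan is to check the four data and the three axioms (c1)–(c3) of Definition \ref{def_gCourant} for degree $\ell+\ell'$ directly, following the classical double construction of \cite{liu1997manin} while tracking the Koszul signs. First I would settle the degree bookkeeping. Under the identification of Example \ref{ex_degreeshift}, a section $(X,\xi)\in\Gamma_{\E}(M)$ of degree $k$ has $|X|=k+\ell'$ in $\A$ and $|\xi|=k+\ell$ in $\A^{\ast}$; hence $\da(X)$ and $\da'(\xi)$ both land in $\X_{\M}(M)$ in degree $k+\ell+\ell'$, so $\rho$ is a well-defined vector bundle map of degree $\ell+\ell'$. The same count shows that, with $\E^{\ast}\cong\A^{\ast}[-\ell']\oplus\A[-\ell]$, the map $g_{\E}(X,\xi)=(\xi,(-1)^{\ell\ell'}X)$ raises degree by $\ell+\ell'$, and a term-by-term inspection of the bracket (using that $\da,\da'$, the Lie derivatives and the differentials all carry their expected degrees) confirms $|[(X,\xi),(Y,\eta)]_{\E}|=|(X,\xi)|+|(Y,\eta)|+\ell+\ell'$.

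Next I would verify that $g_{\E}$ is a fibre-wise metric of degree $\ell+\ell'$. Expanding \eqref{eq_gEformErelation} yields a pairing of the shape $\langle(X,\xi),(Y,\eta)\rangle_{\E}=\pm\,\xi(Y)\pm\,\eta(X)$, and the graded symmetry \eqref{eq_gradedsymmetricform} for degree $\ell+\ell'$ reduces to a single sign identity checked against \eqref{eq_dualgentangentident}; the factor $(-1)^{\ell\ell'}$ in $g_{\E}$ is inserted precisely to make this work. Since $g_{\E}$ visibly exchanges the two summands isomorphically, it is an isomorphism, so $(\E,g_{\E})$ is quadratic of degree $\ell+\ell'$.

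For the axioms themselves I would exploit that both $[\cdot,\cdot]_{\E}$ and $\langle\cdot,\cdot\rangle_{\E}$ obey Leibniz-type rules, so (c1)–(c3) need only be checked on generators, reducing everything to three bidegree cases: $(X,0),(Y,0)$, then $(0,\xi),(0,\eta)$, then the mixed pair $(X,0),(0,\eta)$. On the first the bracket collapses to $([X,Y]_{\A},0)$ and on the second to $(0,[\xi,\eta]_{\A^{\ast}})$, so the pairing-compatibility (c1) and the skew-symmetry (c2) reduce to the Leibniz rule and skew-symmetry already known for the two Lie algebroids, and all content sits in the mixed terms, where the cross pieces $\Li{\xi}^{\A^{\ast}}Y$, $\Li{X}^{\A}\eta$, $(\dr_{\A}\xi)(Y,\cdot)$ and $(\dr_{\A^{\ast}}X)(\eta,\cdot)$ must be matched; this matching is exactly what the duality conditions \eqref{eq_GLBAduality00} and \eqref{eq_GLBAduality01} provide. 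The graded Jacobi identity (c3) is the main obstacle: after the same reduction, the totally mixed Jacobiator on $(X,0),(0,\eta),(0,\zeta)$ and its permutations produces precisely the expression $\dr_{\A}[\eta,\zeta]_{\A^{\ast}}$ minus the two cross Lie-derivative terms, which vanishes by \eqref{eq_GLBAduality11}, while Proposition \ref{tvrz_gLBAdual} supplies the symmetric partner needed for the $(0,\eta),(X,0),(Y,0)$-type terms. I expect the sign bookkeeping in (c3) to be by far the most delicate part; a cleaner, if less elementary, route would be to recognise $[\cdot,\cdot]_{\E}$ as a derived bracket for a ``big bracket'' on $\Gamma_{\A\oplus\A^{\ast}}$ in the spirit of \cite{kosmann1995exact}, under which the bialgebroid compatibility becomes the single equation encoding both $\dr_{\A}^{2}=0$ and the derivation property, and (c3) follows formally.

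Finally, that $\A[\ell']$ and $\A^{\ast}[\ell]$ are Dirac structures is immediate from the explicit formulas: setting $\xi=\eta=0$ (resp. $X=Y=0$) in the bracket leaves $([X,Y]_{\A},0)$ (resp. $(0,[\xi,\eta]_{\A^{\ast}})$), so each submodule is involutive; the form of $g_{\E}$ makes the pairing of any two sections of $\A[\ell']$, or of any two of $\A^{\ast}[\ell]$, vanish, so both are isotropic; and since the pairing between the two summands of $\E=\A[\ell']\oplus\A^{\ast}[\ell]$ is the perfect canonical duality, on every fibre these are complementary Lagrangians of a split quadratic space, whence each is maximally isotropic by Proposition \ref{tvrz_maxisos2}.
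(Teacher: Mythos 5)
Your proposal follows essentially the same route as the paper: direct degree bookkeeping, verification that $g_{\E}$ is a fiber-wise metric, reduction of the axioms to the pure and mixed bidegree cases, with all of the bialgebroid compatibility concentrated in the graded Jacobi identity via (\ref{eq_GLBAduality11}) and Proposition \ref{tvrz_gLBAdual}; you also usefully spell out the Dirac-structure claim that the paper dismisses as obvious. One small misattribution: the mixed terms in (c1) and (c2) do \emph{not} require the duality conditions (\ref{eq_GLBAduality00}) and (\ref{eq_GLBAduality01}) --- they close using only the definitions of the Lie derivatives and the Cartan relations (\ref{eq_LACartan1}), (\ref{eq_LACartan1b}) within each algebroid separately (these axioms hold for the double of \emph{any} pair of dual graded Lie algebroid structures); the compatibility conditions enter only in axiom (c3).
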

\begin{proof}
First, observe that for any $(X,\xi) \in \Gamma_{\E}(M)$, one has $|(X,\xi)| = |X| - \ell' = |\xi| - \ell$, where $|X|$ and $|\xi|$ always denote the original degrees in $\Gamma_{\A}(M)$ and $\Gamma_{\A^{\ast}}(M)$, respectively. Then
\begin{equation}
|\da(X)| = |X| + \ell = |(X,\xi)| + \ell + \ell', \; \; |\da'(\xi)| = |\xi| + \ell' = |(X,\xi)| + \ell + \ell'.
\end{equation}
This shows that $\rho$ is indeed a graded vector bundle map of degree $\ell + \ell'$. Similarly, for any $(\xi,X) \in \Gamma_{\E^{\ast}}(M)$, one has $|(\xi,X)| = |\xi| + \ell' = |X| + \ell$, whence 
\begin{equation}
|g_{\E}(X,\xi)| = |(\xi,(-1)^{\ell \ell'}X)| = |\xi| + \ell' = |(X,\xi)| + \ell + \ell'.
\end{equation}
This shows that $g_{\E}$ is a graded vector bundle isomorphism of degree $\ell + \ell'$. The corresponding fiber-wise metric $\<\cdot,\cdot\>_{\E}$ acquires the form
\begin{equation}
\<(X,\xi),(Y,\eta)\>_{\E} = (-1)^{(|X|+\ell)\ell} \xi(Y) + (-1)^{|X|(|Y|+\ell)} \eta(X),
\end{equation}
for all $(X,\xi),(Y,\eta) \in \Gamma_{\E}(M)$. It is not difficult to see that it is graded symmetric of degree $\ell + \ell'$. Finally, it is clear that $|[(X,\xi),(Y,\eta)]_{\E}| = |(X,\xi)| + |(Y,\eta)| + \ell + \ell'$. We conclude that all operations have the correct degree $\ell + \ell'$. The induced map $\dD: \C^{\infty}_{\M}(M) \rightarrow \Gamma_{\E}(M)$ reads
\begin{equation} \label{eq_dDdoubleLiebialgexpl}
\dD{f} = ( (-1)^{\ell} \da'^{T}(\dr{f}), (-1)^{(\ell+1)\ell'} \da^{T}(\dr{f})) = (-1)^{\ell + \ell'} ( (-1)^{|f|\ell'} \dr_{\A^{\ast}}f, (-1)^{(|f|+\ell')\ell} \dr_{\A}f). 
\end{equation}
It remains to prove the graded Courant algebroid axioms (see Definition \ref{def_gCourant}). Let us only sketch the proof and leave the details to the reader. 
\begin{enumerate}[(i)]
\item The compatibility (\ref{eq_CApairingcomp}) of $[\cdot,\cdot]_{\E}$ with $\<\cdot,\cdot\>_{\E}$ reduces to the definition of Lie derivatives $\Li{}^{\A}$ and $\Li{}^{\A^{\ast}}$. One also uses the fact that $(\dr_{\A}\xi)(X,Y) + (-1)^{\ell + |X||Y|} (\dr_{\A}\xi)(X,Y) = 0$ and its dual counterpart using $\dr_{\A^{\ast}}$. 
\item The proof of (almost) graded skew-symmetry (\ref{eq_CAskewsym}) uses (\ref{eq_dDdoubleLiebialgexpl}) together with Cartan relations (\ref{eq_LACartan1}, \ref{eq_LACartan1b}) and their dual counterparts. 
\item The most complicated bit is naturally the graded Jacobi identity (\ref{eq_CAgJI}). Its validity is equivalent to the graded Jacobi identities for both $\A$ and $\A^{\ast}$, together with the duality conditions (\ref{eq_GLBAduality01}, \ref{eq_GLBAduality11}) and their dual counterparts (they hold thanks to Proposition \ref{tvrz_gLBAdual}).
\end{enumerate}
The claim about $\A[\ell']$ and $\A^{\ast}[\ell]$ being Dirac structures is obvious. 
\end{proof}
\begin{rem}
One can see that $\E$, the anchor $\rho$, and the bracket $[\cdot,\cdot]_{\E}$ are symmetric with respect to $(\A,\ell) \leftrightarrow (\A^{\ast},\ell')$. However, for the pairing, one can write 
\begin{equation}
\<(X,\xi),(Y,\eta)\>_{\E} = (-1)^{\ell \ell'} \{ (-1)^{(|\xi|+\ell')\ell'} X(\eta) + (-1)^{|\xi|(|\eta|+\ell')} Y(\xi) \}.
\end{equation}
This shows that the double of the dual Lie bialgebroid $(\A^{\ast},\A)$ is $(\E, \rho, (-1)^{\ell \ell'} g_{\E}, [\cdot,\cdot]_{\E})$. 
\end{rem}
\begin{tvrz}
Let $(\A,\A^{\ast})$ be a graded Lie bialgebroid of bidegree $(\ell,\ell')$. Then for any $q \in \Z$, $(\A[q], \A[q]^{\ast})$ is a graded Lie bialgebroid of bidegree $(\ell+q, \ell'-q)$. 
\end{tvrz}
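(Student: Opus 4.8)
The plan is to isolate a single \emph{shift lemma} for graded Lie algebroids and then to check that it is compatible with the duality conditions characterising a bialgebroid. First I would prove the shift lemma: if $(\A,\da,[\cdot,\cdot]_{\A})$ is a graded Lie algebroid of degree $m$ and $s\in\Z$, then $\A[s]$ carries a canonical graded Lie algebroid structure of degree $m+s$. Writing $\sigma\colon\Gamma_{\A}\to\Gamma_{\A[s]}$ for the tautological degree $-s$ relabelling of Example \ref{ex_degreeshift}, so that $f\tr'\sigma(X)=(-1)^{|f|s}\sigma(fX)$, I would transport the operations verbatim, $\da_{[s]}(\sigma X):=\da(X)$ and $[\sigma X,\sigma Y]_{\A[s]}:=\sigma([X,Y]_{\A})$. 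The reason this needs \emph{no} correcting sign is that every sign in the axioms (c1)--(c3) is a function of the combinations $|X|+m$ and $|Y|+m$ alone, and these are invariant under $|X|\mapsto|X|-s$, $m\mapsto m+s$; the only genuine bookkeeping is the factor $(-1)^{|f|s}$ from the twisted action $\tr'$, and one checks it cancels in the graded Leibniz rule (in (c1) the two occurrences of $(-1)^{|f|s}$ produced when pulling $f$ through $\sigma$ combine to $(-1)^{2|f|s}=1$). Applying the lemma to $\A$ with $s=q$ shows $\A[q]$ is a graded Lie algebroid of degree $\ell+q$; applying it to $\A^{\ast}$ with $s=-q$, together with the identification $(\A[q])^{\ast}\cong\A^{\ast}[-q]$ obtained by comparing graded ranks (Example \ref{ex_dual} and Example \ref{ex_degreeshift}), shows $(\A[q])^{\ast}$ is a graded Lie algebroid of degree $\ell'-q$. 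Hence the claimed bidegree $(\ell+q,\ell'-q)$ is the correct one, and it remains only to transport the duality condition.

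Next I would identify the induced differential and Schouten--Nijenhuis bracket under the shift. Both the $\A[q]$-forms and the $\A^{\ast}[-q]$-multivector fields are related to $\Omega^{\bullet}_{[\A]}$ and $\X^{\bullet}_{[\A^{\ast}]}$ by the canonical d\'ecalage (degree-shift) isomorphisms, which exchange the exterior and symmetric constructions exactly when $q$ is odd. This parity flip is consistent with the fact that the definition of a graded Lie bialgebroid splits into an even case (\ref{eq_GLBAduality_even}) and an odd case (\ref{eq_GLBAduality_odd}): an odd shift carries the even case for $(\A,\A^{\ast})$ into the odd case for $(\A[q],\A^{\ast}[-q])$ and vice versa. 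Since the anchors and brackets of $\A[q]$ and $\A^{\ast}[-q]$ are the transports of those of $\A$ and $\A^{\ast}$, these shift isomorphisms intertwine $\dr_{\A[q]}$ with $\dr_{\A}$ and $[\cdot,\cdot]_{(\A[q])^{\ast}}$ with $[\cdot,\cdot]_{\A^{\ast}}$ up to universal shift signs, with no new input required.

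Finally I would avoid checking (\ref{eq_GLBAduality_even})/(\ref{eq_GLBAduality_odd}) head-on and instead verify the three pointwise conditions (\ref{eq_GLBAduality00}), (\ref{eq_GLBAduality01}), (\ref{eq_GLBAduality11}) for the shifted pair, matching each to its unshifted counterpart; because $\hat{\da}$, $\hat{\da}'$, $\Li{}^{\A}$ and $\Li{}^{\A^{\ast}}$ are all built from the transported data, each shifted condition should reduce to the original one multiplied by an invertible sign. I expect the genuine obstacle to be precisely this sign reconciliation: the prefactor $(-1)^{(\ell+1)(\ell'+1)}$ in (\ref{eq_GLBAduality00}) is \emph{not} invariant under $(\ell,\ell')\mapsto(\ell+q,\ell'-q)$, so one must check that the compensating signs introduced by the d\'ecalage isomorphisms in $\dr_{\A[q]}$ and in the shifted anchor of $\A^{\ast}[-q]$ exactly restore the equality; tracking these through the odd-$q$ exchange of exterior and symmetric forms is the delicate point, while everything else is formal. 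As a consistency check, one may note that the double of $(\A[q],\A^{\ast}[-q])$ produced by Theorem \ref{thm_doubleofgLBA} is $\A[q][\ell'-q]\oplus\A^{\ast}[-q][\ell+q]=\A[\ell']\oplus\A^{\ast}[\ell]$, the very same graded Courant algebroid of degree $\ell+\ell'$ as the double of $(\A,\A^{\ast})$, which strongly suggests the invariance of the bialgebroid structure under the shift.
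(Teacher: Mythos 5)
Your first step (the shift lemma for a single graded Lie algebroid, plus the identification $\A[q]^{\ast} \cong \A^{\ast}[-q]$) is exactly how the paper begins, and it is sound — though note that the signs in the axioms are \emph{not} all functions of $|X|+\ell$ alone: the hatted anchor $\hat{\da}(X) = (-1)^{(|X|+\ell)\ell}\da(X)$ changes by $(-1)^{(|X|+\ell)q}$ under the shift, and it is the conspiracy between this change and the twisted module action $\tr'$ that makes the Leibniz rule survive, not a cancellation of two copies of $(-1)^{|f|q}$. The real problem is your third step. You propose to verify (\ref{eq_GLBAduality00}), (\ref{eq_GLBAduality01}), (\ref{eq_GLBAduality11}) for the shifted pair by transporting them through d\'ecalage isomorphisms, and you yourself flag the sign reconciliation — including the non-invariance of $(-1)^{(\ell+1)(\ell'+1)}$ and the odd-$q$ exchange of $\Omega^{\bullet}_{[\A]}$ with $\~\Omega^{\bullet}_{[\A]}$ — as ``the delicate point'' to be checked later. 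Since the entire content of the statement is precisely whether those signs close up, deferring that check leaves the proposal with a genuine gap: nothing in the plan guarantees that the d\'ecalage signs restore (\ref{eq_GLBAduality00})--(\ref{eq_GLBAduality11}) rather than spoiling them.

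The irony is that your closing ``consistency check'' is the paper's actual proof, and it lets you bypass the sign chase entirely. Because $\E = \A[\ell'] \oplus \A^{\ast}[\ell] \cong (\A[q])[\ell'-q] \oplus (\A^{\ast}[-q])[\ell+q]$, one can write down the candidate double of $(\A[q],\A[q]^{\ast})$ via Theorem \ref{thm_doubleofgLBA} $(i)$--$(iv)$ and compare it with the double of $(\A,\A^{\ast})$ on the same bundle: the anchors and the brackets coincide on the nose (all exponents appearing in $(iv)$ are invariant mod $2$ under $(|X|,\ell,\ell') \mapsto (|X|-q,\ell+q,\ell'-q)$), and the fiber-wise metrics differ only by the overall sign $(-1)^{q(\ell+\ell'+1)}$. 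The graded Jacobi identity for $[\cdot,\cdot]_{\E}$ therefore holds for the shifted data because it already holds for the original double; and since the proof of Theorem \ref{thm_doubleofgLBA} establishes that this Jacobi identity is \emph{equivalent} to the Jacobi identities of the two algebroids together with the duality conditions (\ref{eq_GLBAduality_even}) or (\ref{eq_GLBAduality_odd}), the duality condition for $(\A[q],\A[q]^{\ast})$ follows with no further computation. Promote your heuristic to the argument itself and the gap closes; as written, it does not.
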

\begin{proof}
It is easy to see that $(\A[q],\da,[\cdot,\cdot]_{\A})$ forms a graded Lie algebroid of degree $\ell + q$. Since $\A[q]^{\ast} \cong \A^{\ast}[-q]$, we may view $\A[q]^{\ast}$ as a graded Lie algebroid of degree $\ell' - q$. Since
\begin{equation}
\E = \A[\ell'] \oplus \A^{\ast}[\ell] \cong (\A[q])[\ell'-q] \oplus (\A^{\ast}[-q])[\ell+q],
\end{equation}
we may compare the structures induced by both $(\A,\A^{\ast})$ and $(\A[q],\A[q]^{\ast})$ on $\E$ as in Theorem \ref{thm_doubleofgLBA} $(i)$-$(iv)$. It turns out that that the  anchors $\rho$ and the brackets $[\cdot,\cdot]_{\E}$ are completely the same, whereas the fiber-wise metrics $\<\cdot,\cdot\>_{\E}$ differ only by an overall sign. In particular, the graded Jacobi identity for $[\cdot,\cdot]_{\E}$ implies the duality condition (\ref{eq_GLBAduality_even}) or (\ref{eq_GLBAduality_odd}) for $(\A[q], \A[q]^{\ast})$. 
\end{proof}
\begin{example}
Consider the graded Lie algebroid $\A = T\M$ of degree $\ell = 0$, see Example \ref{ex_standardgLA}. 

One can view the dual $\A^{\ast} = T^{\ast}\M$ as a graded Lie algebroid of degree $\ell'$ with the trivial anchor of degree $\ell'$ and the trivial bracket of degree $\ell'$. This may seem strange, but since graded vector spaces are (in our viewpoint) just sequences of vector spaces, they have a zero vector in each degree. Consequently, we can have trivial graded linear maps of any given degree. Clearly, $(\A,\A^{\ast})$ is a graded Lie bialgebroid of bidegree $(0,\ell')$. Its double $\E = T[\ell']\M \oplus T^{\ast}\M$ is precisely the degree $\ell'$ graded Dorfman bracket  from Example \ref{ex_gDorfman} (for $H = 0$). 
\end{example}

\begin{example}
Let $(\M,\Pi)$ be a graded Poisson manifold of degree $\ell'$, see Example \ref{ex_gPoisson} and Example \ref{ex_PoissonasDirac} for details and notation. We assume $H = 0$. Observe that $(T^{\ast}\M, \Pi^{\sharp}, [\cdot,\cdot]_{\Pi})$ then becomes a graded Lie algebroid of degree $\ell'$. 

There is thus the corresponding Lie algebroid differential $\dr_{\Pi}$ acting on multivector fields. It turns out that it can be written simply as 
\begin{equation} \label{eq_dPiformula}
\dr_{\Pi} X = (-1)^{\ell'} [\Pi,X]_{S},
\end{equation}
for all $X \in \X^{p}_{\M}(M)$ (for even $\ell')$ or for all $X \in \~\X^{p}_{\M}(M)$ (for odd $\ell')$. Since both $\dr_{\Pi}$ and $(-1)^{\ell'}[\Pi,\cdot]_{S}$ satisfy the graded Leibniz rule (\ref{eq_dAgLeibniz}) or (\ref{eq_dAgLeibniz2}), it suffices to consider $p \in \{0,1\}$. The rest is a straightforward verification using the properties of $[\cdot,\cdot]_{S}$ and definitions. 

Now, $[\cdot,\cdot]_{S}$ is precisely the Schouten-Nijenhuis bracket associated to the graded Lie algebroid $(T\M, \1_{T\M}, [\cdot,\cdot])$ of degree $0$. It follows from (\ref{eq_dPiformula}) and the graded Jacobi identity (\ref{eq_LASNB_gJacobi}) or (\ref{eq_LASNB2_gJacobi}) that $\dr_{\Pi}$ is the graded derivation of $[\cdot,\cdot]_{S}$. In particular, this proves that $(T\M, \1_{T\M}, [\cdot,\cdot])$ and $(T^{\ast}\M, \Pi^{\sharp}, [\cdot,\cdot]_{\Pi})$ form a graded Lie bialgebroid $(T\M,T^{\ast}\M)$ of bidegree $(0,\ell')$. 
\end{example}

\begin{example}
Suppose $\M = \{ \ast \}$ is a singleton graded manifold. Then every graded Lie algebroid of degree $\ell$ reduces to a graded Lie algebra $(\g, [\cdot,\cdot]_{\g})$ of degree $\ell$. A graded Lie bialgebroid is in this case called a \textbf{graded Lie bialgebra}. Equivalently, it can be reformulated as follows. 

For any graded vector space $V$, the space of all graded linear endomorphisms $\Lin(V)$ forms a graded Lie algebra of degree zero. A graded linear map $\varrho: \g \rightarrow \Lin(V)$ of degree $|\varrho| = \ell$ is called a \textbf{representation of $\g$ on $V$}, if $\varrho([x,y]_{\g}) = [\varrho(x),\varrho(y)]$ for all $x,y \in \g$. Define 
\begin{equation}
(\ad^{\ast}_{x} \xi)(y) := - (-1)^{(|x|+\ell)|\xi|} \xi( [x,y]_{\g}), 
\end{equation}
for all $\xi \in \g^{\ast}$ and $x,y \in \g$. This defines the \textbf{coadjoint representation} of $\g$ on $\g^{\ast}$. For each $p \in \N_{0}$, let $\frm_{p}(\g^{\ast})$ denote the space of all graded $p$-linear maps from $\g^{\ast}$ to $\R$. There is an \textbf{adjoint representation } of $\g$ on $\frm_{p}(\g^{\ast})$ given for each $\omega \in \frm_{p}(\g^{\ast})$ and $x \in \g$ by  
\begin{equation}
[\ad_{x}^{(p)} \omega](\xi_{1},\dots,\xi_{p}) = -\sum_{r=1}^{p} (-1)^{(|x|+\ell)(|\omega|+ |\xi_{1}| + \dots + |\xi_{r-1}|)} \omega( \xi_{1}, \dots, \ad^{\ast}_{x} \xi_{r}, \dots, \xi_{p}), 
\end{equation}
for all $\xi_{1},\dots,\xi_{p} \in \g^{\ast}$. For any graded vector space $V$ and $p \in \N_{0}$, there is a a space $\frc^{p}(\g,V)$ of all graded $p$-linear maps $\omega: \g \times \dots \times \g \rightarrow V$ satisfying for each $i \in \{1,\dots,p-1\}$ the conditions
\begin{equation} \label{eq_cpgVdefined}
\omega(\dots,x_{i},x_{i+1},\dots) = -(-1)^{|x_{i}||x_{i+1}|} \omega(\dots, x_{i+1},x_{i}, \dots)
\end{equation}
and $|\omega(x_{1},\dots,x_{p})| = |\omega| + |x_{1}| + \dots + |x_{p}|$ for all $x_{1},\dots,x_{p} \in \g$. We declare $\frc^{0}(\g,V) = V$. Similarly, one defines a graded vector space $\~\frc^{p}(\g,V)$ of symmetric $p$-linear maps, where the sign on the right-hand side of (\ref{eq_cpgVdefined}) is opposite. Finally, for each $p \in \N_{0}$, there is a degree $\ell$ graded linear map $\Delta_{\g}: \frc^{p}(\g,V) \rightarrow \frc^{p+1}(\g,V)$ (for even $\ell$) and $\Delta_{\g}: \~\frc^{p}(\g,V) \rightarrow \~\frc^{p+1}(\g,V)$ (for odd $\ell$). It is defined using the formulas similar to (\ref{eq_dAeven}) of (\ref{eq_dAodd}), with $\da$ replaced by $\varrho$ and $[\cdot,\cdot]_{\A}$ by $[\cdot,\cdot]_{\g}$. It follows that $\Delta_{\g}^{2} = 0$. In this way, we obtain a cochain complex $(\frc^{\bullet}(\g,V), \Delta_{\g})$ (for even $\ell$) or $(\~\frc^{\bullet}(\g,V), \Delta_{\g})$ (for odd $\ell$) of graded vector spaces. 

Now, suppose that we have a graded Lie algebra structure $(\g^{\ast},[\cdot,\cdot]_{\g^{\ast}})$ of degree $\ell'$, and consider a degree $\ell'$ graded linear map $\delta: \g \rightarrow \frm_{2}(\g^{\ast})$ defined as 
\begin{equation}
[\delta(x)](\xi,\eta) := (-1)^{|x|(\ell+\ell'+1)} (\dr_{\g^{\ast}} x)(\xi,\eta) \equiv -(-1)^{|x|(\ell+\ell') + |\xi|\ell} x([\xi,\eta]_{\g^{\ast}}). 
\end{equation}
We can interpret $\delta$ as a $1$-cochain of degree $|\delta| = \ell'$ in $\frc^{1}(\g, \frm_{2}(\g^{\ast}))$ or $\~\frc^{1}(\g, \frm_{2}(\g^{\ast}))$, depending on the parity of $\ell$. One can with some effort (which we leave to the reader) arrive to the following statement: $(\g,\g^{\ast})$ is a graded Lie bialgebra of bidegree $(\ell,\ell')$, iff $\Delta_{\g}\delta = 0$. 

By Theorem \ref{thm_doubleofgLBA}, the graded vector space $\d := \g[\ell'] \oplus \g^{\ast}[\ell]$ can be then equipped with a graded Lie algebra bracket $[\cdot,\cdot]_{\d}$ of degree $\ell + \ell'$. Explicitly, it is given by
\begin{equation} \label{eq_bialgebradouble}
\begin{split}
[(x,\xi),(y,\eta)]_{\d} = \big( & [x,y]_{\g} + (-1)^{(|x|+\ell)(\ell+\ell')} \ad^{\ast}_{\xi}y - (-1)^{(|x|+\ell')(|y|+\ell)} \ad^{\ast}_{\eta}x, \\
& [\xi,\eta]_{\g^{\ast}} + (-1)^{(|x|+\ell)(\ell+\ell')} \ad^{\ast}_{x}\eta - (-1)^{(|x|+\ell')(|y|+\ell)} \ad^{\ast}_{y}\xi \big),
\end{split}
\end{equation}
for all $(x,\xi),(y,\eta) \in \d$. It follows that the graded Jacobi identities for $[\cdot,\cdot]_{\d}$ are equivalent to the graded Jacobi identities for both $\g$ and $\g^{\ast}$, together with the duality condition $\Delta_{\g}\delta = 0$. 

Let us finish with an actual example of a graded Lie bialgebra. Suppose $\g$ for some $q \in \Z - \{0\}$ satisfies $\dim(\g_{q}) = \dim(\g_{0}) = 1$ and $\dim(\g_{j}) = 0$ for $j \notin \{0,q\}$. Let $(t_{1},t_{2})$ be a total basis of $\g$ with $|t_{1}| = q$ and $|t_{2}| = 0$. Then
\begin{equation}
[t_{1},t_{2}]_{\g} = t_{1}, \; \; [t_{1},t_{1}]_{\g} = 0, \; \; [t_{2},t_{2}]_{\g} = 0
\end{equation}
makes $(\g,[\cdot,\cdot]_{\g})$ into a graded Lie algebra of degree $\ell = 0$. If $(t^{1},t^{2})$ is the dual total basis of $\g^{\ast}$, we have $|t^{1}| = -q$ and $|t^{2}| = 0$. It follows that by declaring 
\begin{equation}
[t^{1},t^{2}]_{\g^{\ast}} = t^{2}, \; \;  [t^{1},t^{1}]_{\g^{\ast}} = 0, \; \; [t^{2},t^{2}]_{\g^{\ast}} = 0,
\end{equation} 
we make $(\g^{\ast},[\cdot,\cdot]_{\g^{\ast}})$ into a graded Lie algebra of degree $\ell' = q$. The ``mixed'' brackets od $\d$ can be calculated from (\ref{eq_bialgebradouble}), giving the expressions
\begin{align}
[(t_{1},0),(0,t^{1})]_{\d} = & \ (0,-t^{2}), \\
[(t_{1},0),(0,t^{2})]_{\d} = & \ (0,0), \\
[(t_{2},0),(0,t^{2})]_{\d} = & \ (-t_{1},0), \\
[(t_{2},0),(0,t^{1})]_{\d} = & \ (t_{2},t^{1}).
\end{align} 
It is straightforward to verify that $(\d,[\cdot,\cdot]_{\d})$ constitutes a graded Lie algebra of degree $q$, whence $(\g,\g^{\ast})$ forms a graded Lie bialgebra of bidegree $(0,q)$. 
\end{example}

\section*{Acknowledgments}
First and foremost, I would like to express gratitude to my family for their patience and support. 

Next, I would like to thank Branislav Jurčo, Alexei Kotov and Vít Tuček for helpful discussions. 
The author is grateful for a financial support from MŠMT under grant no. RVO 14000. 

\bibliography{bib}
\appendix
\section{Structures induced by graded Lie algebroids} \label{sec_appendixA}
For the purposes of this section, let $(\A, \da, [\cdot,\cdot]_{\A})$ be a fixed Lie algebroid of degree $\ell$ over a graded manifold $\M$. To any graded vector bundle, one can contruct a graded vector space of its $p$-forms. 

\begin{definice} \label{def_pform}
Let $p \in \N_{0}$. By a \textbf{skew-symmetric $p$-form} $\omega$ on a graded vector bundle $\A$ of degree $|\omega|$, we mean a $p$-linear map 
\begin{equation}
\omega: \Gamma_{\A}(M) \times \dots \times \Gamma_{\A}(M) \rightarrow \C^{\infty}_{\M}(M)
\end{equation}
satisfying $|\omega(X_{1},\dots,X_{p})| = |\omega| + |X_{1}| + \dots + |X_{p}|$ for all $X_{1},\dots,X_{p} \in \Gamma_{\A}(M)$, such that 
\begin{equation}
\omega(f X_{1}, \dots, X_{p}) = (-1)^{|f||\omega|} f \omega(X_{1},\dots,X_{p}), 
\end{equation}
\begin{equation} \label{eq_pform_skewsym}
\omega(X_{1}, \dots, X_{i},X_{i+1},\dots,X_{p}) = - (-1)^{|X_{i}||X_{i+1}|} \omega(X_{1}, \dots, X_{i+1},X_{i},\dots,X_{p}),
\end{equation}
for all $X_{1},\dots,X_{p} \in \Gamma_{\A}(M)$, $f \in \C^{\infty}_{\M}(M)$ and $i \in \{1,\dots,p-1\}$. For each $p \in \N_{0}$, $p$-forms form a graded $\C^{\infty}_{\M}(M)$-module which we denote as $\Omega^{p}_{[\A]}(M)$. We declare $\Omega^{0}_{[\A]}(M) := \C^{\infty}_{\M}(M)$. 
\end{definice}
For each $U \in \Op(M)$, one can declare $\Omega^{p}_{[\A]}(U) := \Omega^{p}_{[\A|_{U}]}(U)$. Using partitions of unity, one can make the assignment $U \mapsto \Omega^{p}_{[\A]}(U)$ into a sheaf of graded $\C^{\infty}_{\M}$-modules, so that 
\begin{equation}
\omega|_{V}( X_{1}|_{V}, \dots, X_{p}|_{V}) = \omega(X_{1},\dots,X_{p})|_{V},
\end{equation}  
for all $V \subseteq U \subseteq M$, $\omega \in \Omega^{1}_{[\A]}(U)$ and $X_{1},\dots,X_{p} \in \Gamma_{\A}(U)$. In this way we obtain a \textbf{sheaf $\Omega^{p}_{[\A]}$ of skew-symmetric $p$-forms on $\A$}.
\begin{rem}
Similarly to \S 6.2 of \cite{vysoky2021global}, $p$-forms on $\A$ can be obtained as a certain subsheaf of $\C^{\infty}_{\A[1+s]}$. The two sheaves are naturally isomorphic, except that one has to use an alternative $\Z$-grading different from the one coming with the sheaf $\C^{\infty}_{\A[1+s]}$. See Proposition 6.4-$(ii)$ there. 
\end{rem}
\begin{rem}
It follows from Example \ref{ex_dual} and definitions that the sheaf $\Omega^{1}_{[\A]}$ is the sheaf of sections of the dual vector bundle $\A^{\ast}$. Similarly to the ordinary case, one can identify sections of $\A$ with the sections of the double dual $(\A^{\ast})^{\ast}$. However, in the graded setting, the action of $X \in \Gamma_{\A}(M)$ on $\omega \in \Omega^{1}_{[\A]}(M)$ is given by the formula containing an additional sign, that is 
\begin{equation} \label{eq_vfactionon1form}
X(\omega) := (-1)^{|X||\omega|} \omega(X). 
\end{equation}
This is in accordance with the Koszul sign convention and one should not miss this subtle difference. 
\end{rem}
The (exterior) product of forms can be most easily obtained from the graded algebra product of the sheaf $\C^{\infty}_{\A[1+s]}$. Since we will not actually use it in this paper, we only state the result. Note that the full exterior algebra $\Omega_{[\A]}(M)$ is \textit{not} a direct product $\prod_{p=0}^{\infty} \Omega^{p}_{[\A]}(M)$ with the $\Z$-grading we use here. For this reason, we will always talk exclusively about ``homogeneous'' forms. 

\begin{tvrz} \label{tvrz_extproduct}
There is an $\R$-bilinear product $\^: \Omega^{p}_{[\A]}(M) \times \Omega^{q}_{[\A]}(M) \rightarrow \Omega^{p+q}_{[\A]}(M)$ for each $p,q \in \N_{0}$. It has the following properties:
\begin{enumerate}[(i)]
\item It is compatible with the grading introduced in Definition \ref{def_pform}, that is $|\omega \^ \tau| = |\omega| + |\tau|$ for all $\omega \in \Omega^{p}_{[\A]}(M)$ and $\tau \in \Omega^{q}_{[\A]}(M)$.
\item One has $\omega \^ \tau = (-1)^{(p + |\omega|)(q + |\tau|)} \tau \^ \omega$ for all $\omega \in \Omega^{p}_{[\A]}(M)$ and $\tau \in \Omega^{q}_{[\A]}(M)$.
\item It is distributive with respect to the addition: $(\omega + \omega') \^ \tau = \omega \^ \tau + \omega' \^ \tau$ for all $\omega,\omega' \in \Omega^{p}_{[\A]}(M)$ with $|\omega| = |\omega'|$ and $\tau \in \Omega^{q}_{[\A]}(M)$. 
\item For all $f \in \C^{\infty}_{\M}(M) \equiv \Omega^{0}_{[\A]}(M)$ and all $\omega \in \Omega^{p}_{[\A]}(M)$, one has $f \^ \omega = f \omega$.  
\item It satisfies $\omega \^ (\tau \^ \eta) = (\omega \^ \tau) \^ \eta$ for all $\omega \in \Omega^{p}_{[\A]}(M)$, $\tau \in \Omega^{q}_{[\A]}(M)$ and $\eta \in \Omega^{r}_{[\A]}(M)$.
\item For each $U \in \Op(M)$, one has $(\omega \^ \tau)|_{U} = \omega|_{U} \^ \tau|_{U}$. 
\end{enumerate}
$\^$ is usually called the \textbf{exterior product}.
\end{tvrz}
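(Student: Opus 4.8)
The plan is to obtain $\wedge$ by transporting the graded commutative associative product of the structure sheaf $\C^\infty_{\A[1+s]}$ of the shifted total space across the identification of forms with functions, exactly as indicated before the statement. The advantage is that properties (ii), (iii), (iv), (v) and the existence of a product of the correct degree become automatic consequences of the algebra axioms, so that the only genuine content is degree bookkeeping. Note that this construction does not use the Lie algebroid structure on $\A$ at all — only that $\A$ is a graded vector bundle — so I would work entirely at the level of the total space.

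First I would fix a local frame $\{\Phi_\lambda\}_{\lambda=1}^{r}$ for $\A$ over $U$ and its dual frame $\{\Phi^\lambda\}$, which under the total-space picture become the fiber coordinates on $\A[1+s]|_U$. Following Proposition 6.4-$(ii)$ of \cite{vysoky2021global}, a $p$-form $\omega \in \Omega^p_{[\A]}(U)$ is identified with the degree-homogeneous function obtained by writing $\omega = \frac{1}{p!}\, \omega_{\lambda_1 \cdots \lambda_p} \Phi^{\lambda_1} \cdots \Phi^{\lambda_p}$ and reinterpreting each $\Phi^\lambda$ as a fiber coordinate; since the conventions of \cite{vysoky2021global} make these coordinates anticommute, the assignment reproduces the skew-symmetry (\ref{eq_pform_skewsym}) and is a sheaf isomorphism onto the component of $\C^\infty_{\A[1+s]}$ that is polynomial of degree $p$ in the fibers. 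The point requiring care is that $\C^\infty_{\A[1+s]}$ carries a single $\Z$-grading, so one must pass to the alternative grading of loc. cit. which separates the form degree $p$ from the internal degree $|\omega|$; because the extra shift $s$ is even, it does not affect parities, and the total parity of the function attached to $\omega$ comes out to be $p + |\omega|$.

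I would then \emph{define} $\omega \wedge \tau$ as the form corresponding to the product of the two attached functions. Property (i) follows because the product of a degree-$p$ and a degree-$q$ fiber polynomial is a degree-$(p+q)$ fiber polynomial whose internal degree adds, and the inverse isomorphism returns a genuine $(p+q)$-form (its $\C^\infty_\M(M)$-multilinearity and skew-symmetry being encoded by the anticommuting coordinates). Graded commutativity of the ambient algebra then gives (ii): the sign $(-1)^{|a||b|}$ becomes exactly $(-1)^{(p+|\omega|)(q+|\tau|)}$ once the total parity is known to be $p+|\omega|$. Distributivity (iii) and associativity (v) transfer verbatim from the algebra axioms; a degree-zero function is an element of $\C^\infty_\M(M)$ acting by ordinary multiplication, giving (iv); and (vi) is immediate since the identification, and hence the product, is defined sheaf-theoretically and commutes with restriction.

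The main obstacle is thus the grading bookkeeping of the previous two paragraphs — verifying that the re-graded single $\Z$-grading of \cite{vysoky2021global} assigns to a $p$-form of internal degree $|\omega|$ the total parity $p+|\omega|$, so that the graded-commutativity sign of $\C^\infty_{\A[1+s]}$ reproduces precisely the sign in (ii); everything else is formal. As a self-contained alternative that bypasses the total space, one can instead \emph{define} $(\omega \wedge \tau)(X_1,\dots,X_{p+q})$ by the graded shuffle formula, summing over $(p,q)$-shuffles $\sigma$ the products $\omega(X_{\sigma(1)},\dots)\,\tau(X_{\sigma(p+1)},\dots)$ weighted by the Koszul sign of reordering $X_1,\dots,X_{p+q}$ according to $\sigma$. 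In that route (i)–(iv) and (vi) are direct, and the only laborious step is associativity (v), which reduces to the standard fact that $(p,q,r)$-shuffles factor through $(p,q)$- and then $(p+q,r)$-shuffles, now carrying the graded Koszul signs.
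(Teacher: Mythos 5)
Your proposal follows exactly the route the paper indicates (but does not write out): transporting the graded commutative associative product of $\C^{\infty}_{\A[1+s]}$ across the identification of $p$-forms with a subsheaf of fiber-wise polynomial functions, with the only real work being the check that the alternative $\Z$-grading assigns total parity $p+|\omega|$ so that graded commutativity yields the sign in (ii); since $s$ is even this parity count is correct. This matches the paper's intended argument, and your shuffle-formula alternative is a valid self-contained backup.
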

Explicit formulas for $\omega \^ \tau$ evaluated on a $(p+q)$-tuple of vector fields are in fact quite complicated. Should one actually need them, they can be obtained using the interior product which we will now introduce. 

Similarly, one can define define the graded space $\~\Omega^{p}_{[\A]}(M)$ of \textbf{symmetric $p$-forms on $\A$}, where we replace (\ref{eq_pform_skewsym}) by 
\begin{equation}
\omega(X_{1},\dots,X_{i},X_{i+1},\dots,X_{p}) = (-1)^{|X_{i}||X_{i+1}|} \omega(X_{1},\dots,X_{i+1},X_{i},\dots,X_{p}),
\end{equation}
for a given $\omega \in \~\Omega^{p}_{[\A]}(M)$ and all $X_{1},\dots,X_{p} \in \Gamma_{\A}(M)$. 

There is a direct analogue of Proposition \ref{tvrz_extproduct} useful in order to get an $\R$-bilinear product $\odot: \~\Omega^{p}_{[\A]}(M) \times \~\Omega^{q}_{[\A]}(M) \rightarrow \~\Omega^{p+q}_{[\A]}(M)$, except that now
\begin{equation}
\omega \odot \tau = (-1)^{|\omega||\tau|} \tau \odot \omega,
\end{equation}
for all $\omega \in \~\Omega^{p}_{[\A]}(M)$ and $\tau \in \~\Omega^{q}_{[\A]}(M)$. 

Finally, one can define \textbf{skew-symmetric $p$-vector fields on $\A$} as $\X^{p}_{[\A]}(M) := \Omega^{p}_{[\A^{\ast}]}(M)$, and \textbf{symmetric $p$-vector fields on $\A$} as $\~\X^{p}_{[\A]}(M) := \~\Omega^{p}_{[\A^{\ast}]}(M)$. 

Now, every graded Lie algebroid of degree $\ell$ induces natural structures on its algebras of forms and multivector fields. This generalizes the similar story for ordinary Lie algebroids, see e.g. \S 7.1 of \cite{Mackenzie}. Let us only write down the resulting statements, the proofs are straightforward. The situation differs based on the parity of $\ell$. 
\begin{tvrz} \label{tvrz_dAeven}
Suppose $(\A,\da,[\cdot,\cdot]_{\A})$ is a graded Lie algebroid of \textbf{even} degree $\ell$. 

Then for each $p \in \N_{0}$, there exists an $\R$-linear map $\dr_{\A}: \Omega_{[\A]}^{p}(M) \rightarrow \Omega_{[\A]}^{p+1}(M)$ of degree $\ell$, defined for each $\omega \in \Omega^{p}_{[\A]}(M)$ and all $X_{1},\dots,X_{p+1} \in \Gamma_{\A}(M)$ by the formula
\begin{equation} \label{eq_dAeven}
\begin{split}
(\dr_{\A}\omega) & (X_{1},\dots,X_{p+1}) = \\
= & \ \sum_{r=1}^{p+1} (-1)^{r+1 + (|X_{r}|-1)|\omega| + |X_{r}|(|X_{1}| + \dots + |X_{r-1}|)} [\hat{\da}(X_{r})](\omega(X_{1},\dots,\hat{X}_{r},\dots,X_{p+1})) \\
& + \hspace{-5mm} \sum_{1 \leq i < j \leq p+1} \hspace{-3mm} (-1)^{|\omega|+i+j} \sigma_{ij}(|X_{1}|,\dots,|X_{p+1}|) \omega([X_{i},X_{j}]_{\A},X_{2},\dots,\hat{X}_{i},\dots,\hat{X}_{j},\dots,X_{p+1}),
\end{split}
\end{equation}
where $\hat{X}$ denotes an omitted element. 

For each tuple $(q_{1},\dots,q_{p+1})$ of integers, $\sigma_{ij}(q_{1},\dots,q_{p+1})$ is the unique sign obtained by reordering the term $v_{1} \dots v_{p+1}$ to $v_{i}v_{j} v_{2} \dots \hat{v}_{i} \dots \hat{v}_{j} \dots v_{p+1}$, where we assume that $v_{k}$'s are independent graded variables which commute in accordance with the rule $v_{k} v_{\ell} = (-1)^{q_{k}q_{\ell}} v_{\ell} v_{k}$. 

For any $\omega \in \Omega^{p}_{[\A]}(M)$ and $\tau \in \Omega^{q}_{[\A]}(M)$, it satisfies the graded Leibniz rule 
\begin{equation} \label{eq_dAgLeibniz}
\dr_{\A}(\omega \^ \tau) = \dr_{\A}\omega \^ \tau + (-1)^{|\omega|+p} \omega \^ \dr_{\A}\tau,
\end{equation}
and it squares to zero, that is $\dr_{\A}(\dr_{\A}\omega) = 0$ for every $\omega \in \Omega^{p}_{[\A]}(M)$. 
\end{tvrz}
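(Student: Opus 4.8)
The plan is to treat the three assertions—well-definedness, the graded Leibniz rule, and $\dr_{\A}^{2}=0$—in turn, with the bulk of the genuine content being sign bookkeeping in the Koszul convention. Throughout, the algebroid axioms (\ref{eq_LALeibniz}, \ref{eq_LAgJI}) and the consequence that the anchor is a bracket homomorphism are the only inputs.

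First I would check that the right-hand side of (\ref{eq_dAeven}) genuinely lands in $\Omega^{p+1}_{[\A]}(M)$. The degree count is immediate: in the anchor sum the operator $\hat{\da}(X_{r})$ has degree $|X_{r}|+\ell$ and is applied to a function of degree $|\omega|+\sum_{k\ne r}|X_{k}|$, while in the bracket sum $[X_{i},X_{j}]_{\A}$ has degree $|X_{i}|+|X_{j}|+\ell$; both families of terms therefore have degree $|\omega|+\ell+\sum_{k}|X_{k}|$, so $\dr_{\A}$ raises the internal degree by $\ell$. Graded skew-symmetry under an adjacent transposition $X_{i}\leftrightarrow X_{i+1}$ is forced by the combinatorial signs: the factors $\sigma_{ij}(|X_{1}|,\dots,|X_{p+1}|)$ and $(-1)^{i+j}$ are defined precisely so that the terms $r=i$ and $r=i+1$ in the anchor sum exchange, the terms with $r\notin\{i,i+1\}$ pick up $-(-1)^{|X_{i}||X_{i+1}|}$ from the skew-symmetry of $\omega$, and the analogous matching occurs in the bracket sum. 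The one computation with real content is $\C^{\infty}_{\M}(M)$-linearity: replacing $X_{1}$ by $fX_{1}$, the anchor terms with $r\geq 2$ produce, via the derivation property of $\hat{\da}(X_{r})$, spurious summands of the form $\hat{\da}(X_{r})(f)\,\omega(\dots)$; these must cancel exactly against the extra terms generated when the Leibniz rule (\ref{eq_LALeibniz}) is applied to $[fX_{1},X_{j}]_{\A}$ inside the bracket sum, leaving only the overall factor $(-1)^{|f|(|\omega|+\ell)}$ demanded of a degree $|\omega|+\ell$ form. Verifying this cancellation is the heart of well-definedness.

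Next I would establish the graded Leibniz rule (\ref{eq_dAgLeibniz}). With the exterior product of Proposition \ref{tvrz_extproduct} in hand, this is a direct computation: one expands $(\dr_{\A}(\omega\^\tau))(X_{1},\dots,X_{p+q+1})$ using (\ref{eq_dAeven}) and sorts the resulting sum according to whether each surviving argument, or the bracketed pair, is fed to $\omega$ or to $\tau$. The anchor terms split cleanly into those hitting the $\omega$-factor and those hitting the $\tau$-factor; the bracket terms with $i,j$ on the same side reproduce $\dr_{\A}\omega$ and $\dr_{\A}\tau$, while the mixed terms $i\leq p<j$ are cancelled by the Leibniz-type rearrangement already used in the $\C^{\infty}_{\M}(M)$-linearity check. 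The sign $(-1)^{|\omega|+p}$ in front of $\omega\^\dr_{\A}\tau$ is exactly the Koszul sign for moving $\dr_{\A}$, of total degree $\ell+1$, past $\omega$.

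Finally, $\dr_{\A}^{2}=0$ is cleanest by reduction to generators. The Leibniz rule (\ref{eq_dAgLeibniz}) shows $\dr_{\A}$ is an odd derivation for the total degree $|\omega|+p$ (it shifts this by $\ell+1$, which is odd since $\ell$ is even), so $\dr_{\A}^{2}$ is again a derivation; hence it suffices to prove it vanishes on $0$-forms and on $1$-forms, which locally generate $\Omega_{[\A]}(M)$ under $\^$. On a function $f$ one computes from (\ref{eq_dAeven}) that $(\dr_{\A}^{2}f)(X,Y)$ collapses to a combination of $\hat{\da}(X)\hat{\da}(Y)(f)$, its graded transpose, and $\hat{\da}([X,Y]_{\A})(f)$, which vanishes precisely because the anchor is a bracket homomorphism, $\da([X,Y]_{\A})=[\da(X),\da(Y)]$. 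On a $1$-form $\xi$ the condition $(\dr_{\A}^{2}\xi)(X,Y,Z)=0$ unwinds, after using the homomorphism property and (\ref{eq_LALeibniz}), to the graded Jacobi identity (\ref{eq_LAgJI}) for $[\cdot,\cdot]_{\A}$. I expect the signs in this last reduction—matching the terms produced by two applications of (\ref{eq_dAeven}) against the three terms of (\ref{eq_LAgJI})—to be the main obstacle; as an independent check one may instead realize $\dr_{\A}$ through the structure sheaf of $\A[1+s]$, as indicated in the remark following Definition \ref{def_pform}, where $\dr_{\A}^{2}=0$ and the Leibniz rule become manifest.
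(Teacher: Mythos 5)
Your proposal is correct and takes essentially the route the paper intends: the paper omits the proof entirely, stating only that it is a straightforward generalization of the ordinary Lie algebroid case and remarking that $\dr_{\A}^{2}=0$ is equivalent to the graded Jacobi identity — exactly the reduction you carry out via the derivation property and the generators $\Omega^{0}_{[\A]}$, $\Omega^{1}_{[\A]}$. You also correctly isolate the two points of real content (the cancellation establishing $\C^{\infty}_{\M}(M)$-linearity, and the locality of $\dr_{\A}$ needed to argue on local generators), and your fallback via $\C^{\infty}_{\A[1+s]}$ matches the paper's own remark.
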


\begin{tvrz}
Suppose $(\A,\da,[\cdot,\cdot]_{\A})$ is a graded Lie algebroid of \textbf{odd} degree $\ell$. 

Then for each $p \in \N_{0}$, there exists an $\R$-linear map $\dr_{\A}: \~\Omega_{[\A]}^{p}(M) \rightarrow \~\Omega_{[\A]}^{p+1}(M)$ of degree $\ell$, defined for each $\omega \in \~\Omega^{p}_{[\A]}(M)$ and all $X_{1},\dots,X_{p+1} \in \Gamma_{\A}(M)$ by the formula
\begin{equation} \label{eq_dAodd}
\begin{split}
(\dr_{\A}\omega) & (X_{1},\dots,X_{p+1}) = \\
= & \ \sum_{r=1}^{p+1} (-1)^{(|X_{r}|)(|\omega|+1) + |X_{r}|(|X_{1}| + \dots + |X_{r-1}|)} [\hat{\da}(X_{r})](\omega(X_{1},\dots,\hat{X}_{r},\dots,X_{p+1})) \\
& - \hspace{-5mm} \sum_{1 \leq i < j \leq p+1} \hspace{-3mm} (-1)^{|\omega|+|X_{i}|} \sigma_{ij}(|X_{1}|,\dots,|X_{p+1}|) \omega([X_{i},X_{j}]_{\A},X_{2},\dots,\hat{X}_{i},\dots,\hat{X}_{j},\dots,X_{p+1}),
\end{split}
\end{equation}
where the signs $\sigma_{ij}(|X_{1}|,\dots,|X_{p+1}|)$ are defined in the same way as in Proposition \ref{tvrz_dAeven}. For any $\omega \in \~\Omega^{p}_{[\A]}(M)$ and $\tau \in \~\Omega^{q}_{[\A]}(M)$, it satisfies the graded Leibniz rule 
\begin{equation} \label{eq_dAgLeibniz2}
\dr_{\A}(\omega \odot \tau) = \dr_{\A}\omega \odot \tau + (-1)^{|\omega|} \omega \odot \dr_{\A}\tau,
\end{equation}
and it squares to zero, that is $\dr_{\A}(\dr_{\A}\omega) = 0$ for every $\omega \in \~\Omega^{p}_{[\A]}(M)$. 
\end{tvrz}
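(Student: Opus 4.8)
The plan is to mirror the even-degree case (Proposition~\ref{tvrz_dAeven}) and the classical construction for ordinary Lie algebroids, the genuine content being the bookkeeping of Koszul signs adapted to the symmetric convention. First I would check that the right-hand side of \eqref{eq_dAodd} really lands in $\~\Omega^{p+1}_{[\A]}(M)$ with internal degree $|\omega|+\ell$. The degree count is immediate: each anchor term $[\hat{\da}(X_r)](\omega(\dots))$ has degree $(|X_r|+\ell)+(|\omega|+\sum_{k\neq r}|X_k|)$ and each bracket term $\omega([X_i,X_j]_{\A},\dots)$ has degree $|\omega|+(|X_i|+|X_j|+\ell)+\sum_{k\neq i,j}|X_k|$, both equal to $|\omega|+\ell+\sum_{k}|X_k|$. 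The two nontrivial points are $\C^{\infty}_{\M}(M)$-homogeneity in the first slot and graded symmetry under interchange of adjacent arguments. For homogeneity one replaces $X_1$ by $fX_1$ and uses the graded Leibniz rule \eqref{eq_LALeibniz} for $[\cdot,\cdot]_{\A}$ together with the fact that $\hat{\da}(X_r)$ is a graded derivation of $\C^{\infty}_{\M}(M)$; the terms in which $\hat{\da}$ differentiates $f$ cancel against the $\hat{\da}(X_j)(f)$ produced by $[fX_1,X_j]_{\A}$, leaving exactly $(-1)^{|f|(|\omega|+\ell)}f$. For symmetry one swaps $X_i\leftrightarrow X_{i+1}$ and uses the symmetry of $\omega$ and the graded skew-symmetry of $[\cdot,\cdot]_{\A}$; the signs $(-1)^{|X_r|(|\omega|+1)+\dots}$ and $(-1)^{|\omega|+|X_i|}$ in \eqref{eq_dAodd} are chosen precisely so the outcome carries the symmetric sign $+(-1)^{|X_i||X_{i+1}|}$ rather than the skew sign of \eqref{eq_pform_skewsym}.

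Next I would establish the graded Leibniz rule \eqref{eq_dAgLeibniz2}. Since $\dr_{\A}$ and $\odot$ are $\R$-(bi)linear and $\~\Omega^{\bullet}_{[\A]}(M)$ is generated under $\odot$ by its elements of form-degree $0$ and $1$, it suffices to verify \eqref{eq_dAgLeibniz2} when $\omega$ and $\tau$ each have form-degree $0$ or $1$ and then extend by associativity of $\odot$; in these low-degree cases it is a direct expansion of \eqref{eq_dAodd}.

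For $\dr_{\A}^{2}=0$ I would exploit the Leibniz rule rather than expand head-on. Applying \eqref{eq_dAgLeibniz2} twice and using $|\dr_{\A}\omega|=|\omega|+\ell$ gives, for $\omega\in\~\Omega^{p}_{[\A]}(M)$ and $\tau\in\~\Omega^{q}_{[\A]}(M)$,
\[
\dr_{\A}^{2}(\omega\odot\tau)=\dr_{\A}^{2}\omega\odot\tau+\big[(-1)^{|\omega|+\ell}+(-1)^{|\omega|}\big]\dr_{\A}\omega\odot\dr_{\A}\tau+\omega\odot\dr_{\A}^{2}\tau.
\]
Here oddness of $\ell$ is essential: $(-1)^{|\omega|+\ell}+(-1)^{|\omega|}=0$, so $\dr_{\A}^{2}$ is an (even) derivation of $\odot$. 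Being a derivation, $\dr_{\A}^{2}$ is determined by its values on the generators $\C^{\infty}_{\M}(M)$ and $\Omega^{1}_{[\A]}(M)$, so it is enough to show $\dr_{\A}^{2}f=0$ and $\dr_{\A}^{2}\xi=0$ for $f\in\C^{\infty}_{\M}(M)$ and $\xi\in\Omega^{1}_{[\A]}(M)$. The first reduces to the anchor being a bracket homomorphism, $\da([X,Y]_{\A})=[\da(X),\da(Y)]$; the second, after expanding with \eqref{eq_dAodd}, reduces exactly to the graded Jacobi identity \eqref{eq_LAgJI} of $(\A,\da,[\cdot,\cdot]_{\A})$.

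The main obstacle is purely the sign discipline: verifying in the first step that the symmetric-convention signs in \eqref{eq_dAodd} yield a symmetric (rather than skew) form and survive the homogeneity cancellation, and checking that the two generator computations collapse to the anchor-homomorphism and Jacobi identities with the correct signs. A conceptually cleaner alternative, which trades signs for setup, is to realize $\~\Omega^{\bullet}_{[\A]}(M)$ as a graded component of the functions $\C^{\infty}_{\A[s]}$ on an \emph{even} shift $\A[s]$ (mirroring the odd-shift description of skew forms referenced after Definition~\ref{def_pform}); under this identification $\odot$ is the algebra product and $\dr_{\A}$ is the degree-$\ell$ vector field $Q$ encoding $\da$ and $[\cdot,\cdot]_{\A}$, so \eqref{eq_dAgLeibniz2} becomes the derivation property of $Q$ and $\dr_{\A}^{2}=\tfrac{1}{2}[Q,Q]=0$ becomes equivalent to the Lie algebroid axioms.
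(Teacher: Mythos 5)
Your proposal is correct and is essentially the verification the paper has in mind: the paper omits the proof as a straightforward generalization of the classical Lie algebroid computation, and your key observation — that oddness of $\ell$ is exactly what kills the cross term $[(-1)^{|\omega|+\ell}+(-1)^{|\omega|}]\,\dr_{\A}\omega\odot\dr_{\A}\tau$, making $\dr_{\A}^{2}$ a derivation of $\odot$ so that $\dr_{\A}^{2}=0$ reduces to the anchor-homomorphism property on functions and to the graded Jacobi identity on $1$-forms — matches the paper's own remark that $\dr_{\A}^{2}=0$ is equivalent to (\ref{eq_LAgJI}). The one loose end is your reduction of the Leibniz rule: verifying (\ref{eq_dAgLeibniz2}) when \emph{both} factors have form-degree $\leq 1$ does not by itself ``extend by associativity''; you need the rule for a generator ($f$ or a $1$-form) against an \emph{arbitrary} second factor, obtained by direct expansion of (\ref{eq_dAodd}), and then an induction on the form-degree of the first factor (working locally in a frame, which is legitimate since all operators commute with restrictions) — a routine repair that does not affect the overall correctness.
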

In fact, in both cases, the condition $\dr_{\A}^{2} = 0$ is equivalent to the graded Jacobi identity (\ref{eq_LAgJI}). Moreover, one can introduce the remaining standard operations for forms, thus obtaining the complete sets of Cartan relations.

\begin{tvrz}
Let $(\A, \da, [\cdot,\cdot]_{\A})$ be a graded Lie algebroid of \textbf{even} degree $\ell$. 

Then for each $p \in \N$ and $X \in \Gamma_{\A}(M)$, there is an $\C^{\infty}_{\M}(M)$-linear operator $i_{X}: \Omega^{p}_{[\A]}(M) \rightarrow \Omega^{p-1}_{[\A]}(M)$ of degree $|X|$, defined for each $\omega \in \Omega^{p}_{[\A]}(M)$ and all $X_{1},\dots,X_{p-1} \in \Gamma_{\A}(M)$ as 
\begin{equation}
(i_{X}\omega)(X_{1},\dots,X_{p-1}) := (-1)^{|\omega|(|X|-1)} \omega(X,X_{1},\dots,X_{p-1}).
\end{equation}
For $\omega \in \Omega^{0}_{[\A]}(M)$, we declare $i_{X} \omega = 0$. It satisfies the formula
\begin{equation}
i_{X}(\omega \^ \tau) = (i_{X}\omega) \^ \tau + (-1)^{(|X|-1)(|\omega|+p)} \omega \^ (i_{X}\tau),
\end{equation}
for every $\omega \in \Omega^{p}_{[\A]}(M)$ and $\tau \in \Omega^{q}_{[\A]}(M)$. For any $X \in \Gamma_{\A}(M)$, let
\begin{equation} \label{eq_LACartan1}
\Li{X}^{\A} = i_{X} \circ \dr_{\A} + (-1)^{|X|} \dr_{\A} \circ i_{X}.
\end{equation}
Then $\Li{X}^{\A}: \Omega^{p}_{[\A]}(M) \rightarrow \Omega^{p}_{[\A]}(M)$ is $\R$-linear of degree $|X| + \ell$ and satisfies 
\begin{equation}
\Li{X}^{\A}(\omega \^ \tau) = (\Li{X}^{\A} \omega) \^ \tau + (-1)^{|X|(|\omega|+p)} \omega \^ (\Li{X}^{\A} \tau), 
\end{equation}
for all $\omega \in \Omega^{p}_{[\A]}(M)$ and $\tau \in \Omega^{q}_{[\A]}(M)$. These operations satisfy the following set of Cartan relations:
\begin{align}
\label{eq_LACartan2} \Li{[X,Y]_{\A}}^{\A} = & \ \Li{X}^{\A} \circ \Li{Y}^{\A} - (-1)^{|X||Y|} \Li{Y}^{\A} \circ \Li{X}^{\A}, \\
\label{eq_LACartan3} i_{[X,Y]_{\A}} = & \ \Li{X}^{\A} \circ i_{Y} - (-1)^{(|X|(|Y|-1)} i_{Y} \circ \Li{X}^{\A}, \\
\label{eq_LACartan4} 0 = & \ \Li{X}^{\A} \circ \dr_{\A} - (-1)^{|X|} \dr_{\A} \circ \Li{X}^{\A}, \\
\label{eq_LACartan5} 0 = & \ i_{X} \circ i_{Y} - (-1)^{(|X|-1)(|Y|-1)} i_{Y} \circ i_{X}.
\end{align}
\end{tvrz}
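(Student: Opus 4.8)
The plan is to carry out all of Cartan's calculus inside the graded Lie algebra of $\R$-linear endomorphisms of the total module of forms $\Omega_{[\A]}(M)$, organized by the \emph{total parity} which adds the form-degree parity to the internal-degree parity modulo $2$. With this convention (and using that $\ell$ is even) the operator $i_{X}$ is odd of parity $1+|X|$, the differential $\dr_{\A}$ is odd of parity $1$, and $\Li{X}^{\A}$ has parity $|X|$; moreover the defining equation (\ref{eq_LACartan1}) is exactly the super-commutator $\Li{X}^{\A}=[i_{X},\dr_{\A}]$. Under this bookkeeping the four Cartan relations become graded-commutator identities: (\ref{eq_LACartan5}) reads $[i_{X},i_{Y}]=0$, (\ref{eq_LACartan4}) reads $[\Li{X}^{\A},\dr_{\A}]=0$, (\ref{eq_LACartan3}) reads $i_{[X,Y]_{\A}}=[\Li{X}^{\A},i_{Y}]$, and (\ref{eq_LACartan2}) reads $\Li{[X,Y]_{\A}}^{\A}=[\Li{X}^{\A},\Li{Y}^{\A}]$. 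Since every operator in sight satisfies a graded Leibniz rule with respect to $\^$, each is local and is determined by its action on the algebra generators $\Omega^{0}_{[\A]}(M)=\C^{\infty}_{\M}(M)$ and $\Omega^{1}_{[\A]}(M)=\Gamma_{\A^{\ast}}(M)$; working in a local frame I will only ever have to test identities on functions and on a frame of $1$-forms.

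First I would check that $i_{X}$ is well defined: the right-hand side of its defining formula is graded skew in $X_{1},\dots,X_{p-1}$ because $\omega$ is skew in its last $p-1$ slots, it has internal degree $|\omega|+|X|$ (so $i_{X}$ raises internal degree by $|X|$), and its $\C^{\infty}_{\M}(M)$-multilinearity follows by combining the function-linearity in the first slot of $\omega$ with the skew-symmetry (\ref{eq_pform_skewsym}). The anti-derivation rule for $i_{X}(\omega\^\tau)$ and relation (\ref{eq_LACartan5}) are then direct consequences of the definition of $i_{X}$ and the graded-commutative algebra structure of $\Omega_{[\A]}(M)$ from Proposition \ref{tvrz_extproduct}; the cleanest route realizes $i_{X}$ as the contraction derivation on the function algebra $\C^{\infty}_{\A[1+s]}$, where $\^$ is the algebra product, so that both properties hold at the level of algebra derivations. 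Granting these, $\Li{X}^{\A}=[i_{X},\dr_{\A}]$ is a super-commutator of two odd derivations of $\^$, hence itself a graded derivation of parity $|X|$ and internal degree $|X|+\ell$; this yields at once its stated Leibniz rule, with the sign $(-1)^{|X|(|\omega|+p)}$, and its degree.

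The two relations that close formally follow from the graded Jacobi identity for super-commutators together with $\dr_{\A}^{2}=0$ (Proposition \ref{tvrz_dAeven}). For (\ref{eq_LACartan4}), the graded Jacobi identity gives $[\Li{X}^{\A},\dr_{\A}]=[[i_{X},\dr_{\A}],\dr_{\A}]=0$, because $\dr_{\A}$ is odd and $[\dr_{\A},\dr_{\A}]=2\dr_{\A}^{2}=0$. For (\ref{eq_LACartan2}), once (\ref{eq_LACartan3}) is available I would expand
\begin{equation*}
[\Li{X}^{\A},\Li{Y}^{\A}]=[\Li{X}^{\A},[i_{Y},\dr_{\A}]]=[[\Li{X}^{\A},i_{Y}],\dr_{\A}]+(-1)^{|X|(1+|Y|)}[i_{Y},[\Li{X}^{\A},\dr_{\A}]],
\end{equation*}
where the last term vanishes by (\ref{eq_LACartan4}) and the first equals $[i_{[X,Y]_{\A}},\dr_{\A}]=\Li{[X,Y]_{\A}}^{\A}$ by (\ref{eq_LACartan3}) and the definition of $\Li{}^{\A}$.

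This leaves (\ref{eq_LACartan3}), which is the genuine content and the main obstacle, since it is the only relation that actually sees the anchor $\hat{\da}$ and the bracket $[\cdot,\cdot]_{\A}$ rather than following from pure super-algebra. Both sides are odd graded derivations of parity $1+|X|+|Y|$, so it suffices to test the equality on generators. On $\Omega^{0}_{[\A]}(M)$ both sides vanish, since every interior product kills functions and $\Li{X}^{\A}f$ is again a function. The decisive check is on a $1$-form $\xi$: there $i_{[X,Y]_{\A}}\xi$ is $\xi([X,Y]_{\A})$ up to the sign in the definition of $i$, while $[\Li{X}^{\A},i_{Y}]\xi$ must be unfolded through (\ref{eq_LACartan1}) and the explicit formula (\ref{eq_dAeven}) for $\dr_{\A}$ in degree $p=1$; the contributions built from $\hat{\da}(X)$ and $\hat{\da}(Y)$ cancel, and the surviving term reproduces $\xi([X,Y]_{\A})$ with the correct sign. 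I expect the real labor to be purely the bookkeeping of the shuffle signs $\sigma_{ij}$ and the prefactors in (\ref{eq_dAeven}), tedious but entirely mechanical.
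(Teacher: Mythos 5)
The paper offers no proof of this proposition: Appendix A explicitly defers to the ungraded Cartan calculus with the remark that ``the proofs are straightforward,'' so there is nothing to compare your argument against line by line. That said, your argument is the standard one and it is sound. The reduction of (\ref{eq_LACartan1}), (\ref{eq_LACartan2}), (\ref{eq_LACartan4}) and (\ref{eq_LACartan5}) to graded-commutator identities in the endomorphism algebra graded by total parity (form degree plus internal degree, using that $\ell$ is even) is exactly right, and the sign bookkeeping checks out: with $p(i_{X})=|X|+1$, $p(\dr_{\A})=1$ and $p(\Li{X}^{\A})=|X|$, each of the four relations is literally the supercommutator of the corresponding operators, and (\ref{eq_LACartan4}) and (\ref{eq_LACartan2}) then follow from the graded Jacobi identity together with $\dr_{\A}^{2}=0$ as you say. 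Two points deserve slightly more care than you give them. First, the ``test on generators'' step for (\ref{eq_LACartan3}) requires both that the two sides be graded derivations of $\^$ of the same total parity and that every homogeneous $p$-form be, locally, a $\C^{\infty}_{\M}$-combination of wedge products of elements of a dual frame; the latter holds because $\grk(\A)$ is finite, but since the paper warns that $\Omega_{[\A]}(M)$ is not the direct product of its components in this grading, it is worth stating explicitly that the identity is checked degreewise and locally. Second, the anti-derivation property of $i_{X}$ (and hence the derivation property of $\Li{X}^{\A}$) cannot be read off from Proposition \ref{tvrz_extproduct} alone, since that proposition characterizes $\^$ only abstractly; your appeal to the realization of forms inside $\C^{\infty}_{\A[1+s]}$, where $i_{X}$ becomes an honest algebra derivation, is the right way to close this and is consistent with the paper's own remark that explicit formulas for $\^$ are recovered \emph{from} the interior product. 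With those provisos the only genuinely computational step is, as you identify, the verification of (\ref{eq_LACartan3}) on $1$-forms via (\ref{eq_dAeven}), where one finds $\Li{X}^{\A}f=\hat{\da}(X)(f)$ (the stray factor $(-1)^{\ell(|X|-1)}$ collapses because $\ell$ is even), the two anchor terms cancel, and the $\xi([X,Y]_{\A})$ term survives with the sign demanded by the definition of $i_{[X,Y]_{\A}}$.
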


\begin{tvrz}
Let $(\A, \da, [\cdot,\cdot]_{\A})$ be a graded Lie algebroid of \textbf{odd} degree $\ell$. 

Then for each $p \in \N$ and $X \in \Gamma_{\A}(M)$, there is an $\C^{\infty}_{\M}(M)$-linear operator $j_{X}: \~\Omega^{p}_{[\A]}(M) \rightarrow \~\Omega^{p-1}_{[\A]}(M)$ of degree $|X|$, defined for each $\omega \in \~\Omega^{p}_{[\A]}(M)$ and all $X_{1},\dots,X_{p-1} \in \Gamma_{\A}(M)$ as 
\begin{equation}
(j_{X}\omega)(X_{1},\dots,X_{p-1}) := (-1)^{|\omega||X|} \omega(X,X_{1},\dots,X_{p-1}).
\end{equation}
For $\omega \in \~\Omega^{0}_{[\A]}(M)$, we declare $j_{X} \omega = 0$. It satisfies the formula
\begin{equation}
j_{X}(\omega \odot \tau) = (j_{X}\omega) \odot \tau + (-1)^{|X||\omega|} \omega \odot (j_{X}\tau),
\end{equation}
for every $\omega \in \~\Omega^{p}_{[\A]}(M)$ and $\tau \in \~\Omega^{q}_{[\A]}(M)$. For any $X \in \Gamma_{\A}(M)$, let
\begin{equation} \label{eq_LACartan1b}
\Li{X}^{\A} = j_{X} \circ \dr_{\A} - (-1)^{|X|} \dr_{\A} \circ j_{X}.
\end{equation}
Then $\Li{X}^{\A}: \~\Omega^{p}_{[\A]}(M) \rightarrow \~\Omega^{p}_{[\A]}(M)$ is $\R$-linear of degree $|X| + \ell$ and satisfies 
\begin{equation}
\Li{X}^{\A}(\omega \odot \tau) = (\Li{X}^{\A} \omega) \odot \tau + (-1)^{(|X|+1)|\omega|} \omega \odot (\Li{X}^{\A} \tau), 
\end{equation}
for all $\omega \in \~\Omega^{p}_{[\A]}(M)$ and $\tau \in \~\Omega^{q}_{[\A]}(M)$. These operations satisfy the following set of Cartan relations:
\begin{align}
\label{eq_LACartan2b} \Li{[X,Y]_{\A}}^{\A} = & \ \Li{X}^{\A} \circ \Li{Y}^{\A} - (-1)^{(|X|+1)(|Y|+1)} \Li{Y}^{\A} \circ \Li{X}^{\A}, \\
\label{eq_LACartan3b} j_{[X,Y]_{\A}} = & \ \Li{X}^{\A} \circ j_{Y} - (-1)^{(|X|+1)|Y|} j_{Y} \circ \Li{X}^{\A}, \\
\label{eq_LACartan4b} 0 = & \ \Li{X}^{\A} \circ \dr_{\A} + (-1)^{|X|} \dr_{\A} \circ \Li{X}^{\A}, \\
\label{eq_LACartan5b} 0 = & \ j_{X} \circ j_{Y} - (-1)^{|X||Y|} j_{Y} \circ j_{X}.
\end{align}
\end{tvrz}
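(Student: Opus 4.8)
The plan is to follow the even-degree case (the preceding proposition) step by step, the only novelty being the systematic replacement of the skew-symmetric Koszul signs by their symmetric counterparts. First I would check that $j_{X}$ is well defined: inserting $X$ into the first slot of a symmetric form $\omega$ again produces a symmetric map in the remaining $p-1$ arguments, and the degree count $|\omega(X,X_{1},\dots,X_{p-1})| = |\omega| + |X| + \sum_{i}|X_{i}|$ shows $j_{X}$ has degree $|X|$; the $\C^{\infty}_{\M}(M)$-linearity is inherited from that of $\omega$. The Leibniz rule for $j_{X}$ is obtained by evaluating both sides on a tuple of sections and using the explicit symmetric-product formula (the analogue of Proposition \ref{tvrz_extproduct}): inserting $X$ into the first argument splits the shuffle sum into the terms where $X$ lands in the $\omega$-factor and those where it lands in the $\tau$-factor, the sign $(-1)^{|X||\omega|}$ being precisely the cost of commuting $X$ past $\omega$ in the latter case. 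The degree $|X|+\ell$ and the Leibniz rule for $\Li{X}^{\A}$ then follow formally from the definition (\ref{eq_LACartan1b}) together with the Leibniz rule (\ref{eq_dAgLeibniz2}) for $\dr_{\A}$.

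Next I would dispose of the two ``formal'' Cartan relations. Identity (\ref{eq_LACartan5b}) is a direct symmetry computation: writing out $j_{X}j_{Y}\omega$ and $j_{Y}j_{X}\omega$ from the definition, the two differ exactly by swapping the first two adjacent arguments of $\omega$, which in the symmetric convention costs the sign $(-1)^{|X||Y|}$, and a short bookkeeping of the prefactors confirms the stated relation. Identity (\ref{eq_LACartan4b}) is purely algebraic and uses only $\dr_{\A}^{2}=0$ and (\ref{eq_LACartan1b}): substituting $\Li{X}^{\A} = j_{X}\dr_{\A} - (-1)^{|X|}\dr_{\A}j_{X}$ into $\Li{X}^{\A}\circ\dr_{\A}$ and $\dr_{\A}\circ\Li{X}^{\A}$, every surviving term equals $\pm\,\dr_{\A}j_{X}\dr_{\A}$, and the two expressions cancel with the claimed sign.

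The crux is the contraction relation (\ref{eq_LACartan3b}), which I would prove by induction on the form degree $p$. The operator $D := \Li{X}^{\A}\circ j_{Y} - (-1)^{(|X|+1)|Y|}\,j_{Y}\circ\Li{X}^{\A} - j_{[X,Y]_{\A}}$ is, being a graded commutator of derivations of $\odot$ minus a derivation of matching degree $|X|+|Y|+\ell$, again a graded derivation of $\odot$ of the same parity as $j_{[X,Y]_{\A}}$. Since the sheaf $\~\Omega^{\bullet}_{[\A]}$ is locally generated over $\C^{\infty}_{\M}$ by $\~\Omega^{0}_{[\A]} = \C^{\infty}_{\M}$ and $\~\Omega^{1}_{[\A]} = \Gamma_{\A^{\ast}}$ under $\odot$, and all operators are local, it suffices to check $D=0$ on $0$-forms and $1$-forms. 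On functions this is immediate; on a $1$-form $\xi$ it reduces, after expanding $\dr_{\A}\xi$ through (\ref{eq_dAodd}), to the defining interplay between the bracket $[X,Y]_{\A}$, the anchor $\hat{\da}$ and $\Li{X}^{\A}$, i.e. to an instance of the graded Jacobi identity (\ref{eq_LAgJI}) together with the anchor-homomorphism property. This single $1$-form computation is where the Lie algebroid axioms genuinely enter and where the sign bookkeeping is most delicate; everything else is formal.

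Finally, (\ref{eq_LACartan2b}) follows from (\ref{eq_LACartan3b}) and $\dr_{\A}^{2}=0$ by a formal graded-commutator manipulation: expressing $\Li{X}^{\A}$ and $\Li{Y}^{\A}$ via (\ref{eq_LACartan1b}) and repeatedly commuting $\dr_{\A}$ past $j_{Y}$ by means of (\ref{eq_LACartan3b}) and (\ref{eq_LACartan4b}) collapses the graded commutator $\Li{X}^{\A}\circ\Li{Y}^{\A} - (-1)^{(|X|+1)(|Y|+1)}\Li{Y}^{\A}\circ\Li{X}^{\A}$ to $j_{[X,Y]_{\A}}\circ\dr_{\A} - (-1)^{|[X,Y]_{\A}|}\dr_{\A}\circ j_{[X,Y]_{\A}} = \Li{[X,Y]_{\A}}^{\A}$. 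I expect the main obstacle throughout to be not conceptual but the relentless tracking of Koszul signs under the symmetric convention, concentrated in the $1$-form step of (\ref{eq_LACartan3b}).
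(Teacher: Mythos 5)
Your proof is correct. The paper itself offers no proof of this proposition --- the appendix merely states the results and declares the proofs straightforward --- and your route (direct verification of the definitional properties of $j_{X}$ and $\Li{X}^{\A}$, reduction of the contraction identity (\ref{eq_LACartan3b}) to $0$-forms and $1$-forms via the observation that the defect operator is a graded derivation of $\odot$, and purely formal commutator manipulations for (\ref{eq_LACartan4b}) and (\ref{eq_LACartan2b})) is exactly the standard argument the author presumably has in mind; I have checked the sign bookkeeping in the key steps and it is consistent with the odd-$\ell$ conventions. One small correction of attribution: the $1$-form step of (\ref{eq_LACartan3b}) uses only the defining formula (\ref{eq_dAodd}) for $\dr_{\A}\xi$ (this is where the term $\xi([X,Y]_{\A})$ is produced) together with $\Li{X}^{\A}f = \hat{\da}(X)(f)$; neither the graded Jacobi identity (\ref{eq_LAgJI}) nor the anchor-homomorphism property is needed there. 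The Jacobi identity enters the Cartan calculus only through $\dr_{\A}^{2}=0$, i.e.\ precisely in your derivations of (\ref{eq_LACartan4b}) and (\ref{eq_LACartan2b}).
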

Finally, it turns out that any graded Lie algebroid induces a Schouten-Nijenhuis bracket on \textbf{both} skew-symmetric and symmetric multivector fields. 
\begin{tvrz}
Let $(\A,\da,[\cdot,\cdot]_{\A})$ be a graded Lie algebroid of an \textbf{arbitrary} degree $\ell$.

Then for each $p \in \N_{0}$ and $X \in \Gamma_{\A}(M)$, there is an $\R$-linear operator $\Li{X}^{\A}: \X^{p}_{[\A]}(M) \rightarrow \X^{p}_{[\A]}(M)$ of degree $|X|+\ell$, defined for each $Y \in \X^{p}_{[\A]}(M)$ and $\xi_{1},\dots,\xi_{p} \in \Omega^{1}_{[\A]}(M)$ as 
\begin{equation}
\begin{split}
(\Li{X}^{\A}Y)(\xi_{1},\dots,\xi_{p}) := & \ [\hat{\da}(X)](Y(\xi_{1},\dots,\xi_{p})) \\
& - \sum_{r=1}^{p} (-1)^{(|X|+\ell)(|Y| + |\xi_{1}| + \dots + |\xi_{r-1}|)} Y(\xi_{1}, \dots, \Li{X}^{\A}\xi_{r}, \dots, \xi_{p}). 
\end{split}
\end{equation}
For each $Y \in \X^{p}_{[\A]}(M)$ and $Z \in \X^{q}_{[\A]}(M)$, it satisfies the relation
\begin{equation}
\Li{X}^{\A}(Y \^ Z) = (\Li{X}^{\A}Y) \^ Z + (-1)^{(|X|+\ell)(|Y| + p)} Y \^ (\Li{X}^{\A}Z),
\end{equation}
and for all $X \in \Gamma_{\A}(M)$ and $\xi \in \Omega^{1}_{[\A]}(M)$, there holds the Cartan relation 
\begin{equation}
\Li{X}^{\A} \circ i_{\xi} - (-1)^{(|X|+\ell)(|\xi|-1)} i_{\xi} \circ \Li{X}^{\A} = i_{\Li{X}^{\A}\xi}.
\end{equation}
\end{tvrz}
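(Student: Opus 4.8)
The plan is to adapt the classical construction of the Lie derivative on multivector fields for ordinary Lie algebroids (see \S 7.1 of \cite{Mackenzie}) to the graded setting, in three stages: well-definedness of the displayed formula, the wedge-product Leibniz rule, and the Cartan relation. The only inputs needed are the already-constructed Lie derivative $\Li{X}^{\A}$ on $1$-forms $\xi \in \Omega^{1}_{[\A]}(M)$ together with its Leibniz property (a consequence of the Cartan formula (\ref{eq_LACartan1})/(\ref{eq_LACartan1b})), the fact that $\hat{\da}(X)$ is a vector field of degree $|X|+\ell$, and the Koszul convention (\ref{eq_vfactionon1form}).

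First I would check that, for fixed $X$ and $Y \in \X^{p}_{[\A]}(M)$, the right-hand side is a genuine element of $\X^{p}_{[\A]}(M)$ of degree $|X|+\ell+|Y|$. The degree is immediate, since $\hat{\da}(X)$ raises degree by $|X|+\ell$ and each $\Li{X}^{\A}\xi_{r}$ raises the degree of its slot by $|X|+\ell$, so both terms carry total degree $(|X|+\ell+|Y|)+\sum_{r}|\xi_{r}|$. The two substantive checks are graded $\C^{\infty}_{\M}(M)$-multilinearity and skew-symmetry in $\xi_{1},\dots,\xi_{p}$. Multilinearity is exactly the point of the formula: replacing $\xi_{1}$ by $f\xi_{1}$ produces in the first term an extra summand $\hat{\da}(X)(f)\,Y(\xi_{1},\dots,\xi_{p})$ from the Leibniz rule for the derivation $\hat{\da}(X)$, and this is cancelled precisely by the $r=1$ term once $\Li{X}^{\A}(f\xi_{1})$ is expanded by the Leibniz property of the Lie derivative on $1$-forms, the surviving piece reproducing the required sign $(-1)^{|f|(|X|+\ell+|Y|)}$. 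Skew-symmetry under the interchange of adjacent arguments follows from the skew-symmetry of $Y$ together with the matching of the two affected summands, and is pure Koszul bookkeeping.

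Next I would establish the wedge-product Leibniz rule. The cleanest route is to characterise $\Li{X}^{\A}$ as the unique degree $|X|+\ell$ graded derivation of the exterior algebra $(\X_{[\A]}(M),\^)$ restricting to $f \mapsto \hat{\da}(X)(f)$ on $\X^{0}_{[\A]}(M)=\C^{\infty}_{\M}(M)$ and to $Y \mapsto [X,Y]_{\A}$ on $\X^{1}_{[\A]}(M) \cong \Gamma_{\A}(M)$. Since every $p$-vector field is locally a $\C^{\infty}_{\M}$-combination of $p$-fold wedge products of frame sections of $\A$, such a derivation exists and is unique, and the Leibniz rule is then built into the characterisation; one identifies the explicit formula with this derivation by induction on $p$, using the shuffle expansion of $(Y\^ Z)(\xi_{1},\dots,\xi_{p+q})$ provided by the analogue of Proposition \ref{tvrz_extproduct} for $\A^{\ast}$. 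All identities are local, so working over a frame and gluing by partitions of unity is legitimate.

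Finally, for the Cartan relation I would evaluate both sides on an arbitrary tuple $(\xi_{1},\dots,\xi_{p-1})$ of $1$-forms, writing $i_{\xi}$ as insertion of $\xi$ into the first slot with the sign $(-1)^{|Y|(|\xi|-1)}$. Substituting the explicit formula for $\Li{X}^{\A}$, the left-hand side splits into the term where $\hat{\da}(X)$ differentiates $Y(\xi,\xi_{1},\dots,\xi_{p-1})$ and the terms where $\Li{X}^{\A}$ hits an inserted $1$-form; the contributions in which $\Li{X}^{\A}$ hits $\xi_{1},\dots,\xi_{p-1}$ cancel between the two compositions $\Li{X}^{\A}\circ i_{\xi}$ and $i_{\xi}\circ\Li{X}^{\A}$, leaving exactly the term in which $\Li{X}^{\A}$ acts on $\xi$, which reassembles into $i_{\Li{X}^{\A}\xi}Y$. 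I expect the main obstacle to be the sign bookkeeping rather than any conceptual point: matching the Koszul signs in the multilinearity and skew-symmetry checks, and tracking the shuffle signs when identifying the explicit formula with the abstract derivation. The degree shift hidden in $\hat{\da}$ and the convention (\ref{eq_vfactionon1form}) make these signs delicate, which is precisely why the paper relegates the details.
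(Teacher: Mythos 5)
The paper gives no proof of this proposition at all — the appendix explicitly states that the results generalize \S 7.1 of \cite{Mackenzie} and that ``the proofs are straightforward'' — and your proposal is precisely the straightforward direct verification being alluded to: checking graded $\C^{\infty}_{\M}(M)$-multilinearity and skew-symmetry of the displayed formula (with the cancellation of the $\hat{\da}(X)(f)$ terms between the derivation term and the $r=1$ summand), identifying $\Li{X}^{\A}$ with the unique degree $|X|+\ell$ derivation of $(\X_{[\A]}(M),\wedge)$ extending $\hat{\da}(X)$ and $[X,\cdot]_{\A}$, and evaluating both sides of the Cartan relation on a tuple of $1$-forms so that the $\Li{X}^{\A}\xi_{r}$ contributions cancel and the $\Li{X}^{\A}\xi$ term survives with the sign $(-1)^{|Y|(|\xi|+|X|+\ell-1)}$ matching $i_{\Li{X}^{\A}\xi}Y$. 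The only point worth making explicit is that for arbitrary $\ell$ the Lie derivative on $\Omega^{1}_{[\A]}(M)$ used in the formula is unambiguous because $\Omega^{1}_{[\A]}=\~\Omega^{1}_{[\A]}=\Gamma_{\A^{\ast}}$, so either Cartan formula (\ref{eq_LACartan1}) or (\ref{eq_LACartan1b}) (equivalently, duality with $[\cdot,\cdot]_{\A}$) defines the same operator there; with that noted, your argument is correct and consistent with the paper's intent.
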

\begin{tvrz} \label{tvrz_LASNB}
For each $p,q \in \N_{0}$, there is an $\R$-bilinear bracket $[\cdot,\cdot]_{\A}: \X^{p}_{[\A]}(M) \times \X^{q}_{[\A]}(M) \rightarrow \X^{p+q-1}_{[\A]}(M)$ having the following properties:
\begin{enumerate}[(i)]
\item One has $|[X,Y]_{\A}| = |X| + |Y| + \ell$ for all $X \in \X_{[\A]}^{p}(M)$ and $Y \in \X_{[\A]}^{q}(M)$. 
\item For every $f \in \X^{0}_{[\A]}(M) \equiv \C^{\infty}_{\M}(M)$, $X \in \X^{1}_{[\A]}(M) \equiv \Gamma_{\A}(M)$ and $Y \in \X^{q}_{[\A]}(M)$, one has 
\begin{equation} \label{eq_LASNB_special}
[f,Y]_{\A} = (-1)^{|f|+1} i_{\dr_{\A}{f}} Y, \; \; [X,Y]_{\A} = \Li{X}^{\A}Y.
\end{equation}
\item For each $X \in \X^{p}_{[\A]}(M)$ and $Y \in \X^{q}_{\M}(M)$, one has
\begin{equation} \label{eq_LASNB_gsymmetry}
[X,Y]_{\A} = -(-1)^{(p + \ell + |X| - 1)(q + \ell + |Y| - 1)} [Y,X]_{\A}. 
\end{equation}
\item For all $X \in \X_{[\A]}^{p}(M)$, $Y \in \X_{[\A]}^{q}(M)$ and $Z \in \X_{[\A]}^{r}(M)$, one has
\begin{equation}
[X, Y \^ Z]_{\A} = [X,Y]_{\A} \^ Z + (-1)^{(p + \ell + |X| - 1)(q + \ell + |Y|)} Y \^ [X,Z]_{\A}.
\end{equation}
\item For all $X \in \X_{[\A]}^{p}(M)$, $Y \in \X_{[\A]}^{q}(M)$ and $Z \in \X_{[\A]}^{r}(M)$, one finds
\begin{equation} \label{eq_LASNB_gJacobi}
[X, [Y,Z]_{\A}]_{\A} = [[X,Y]_{\A}, Z]_{\A} + (-1)^{(p + \ell + |X| -1)(q + \ell + |Y| - 1)}[Y, [X,Z]_{\A}]_{\A}. 
\end{equation}
\end{enumerate}
\end{tvrz}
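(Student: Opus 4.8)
The plan is to construct $[\cdot,\cdot]_{\A}$ by prescribing its values on generators through property \emph{(ii)} and then extending by the Leibniz rule \emph{(iv)}, the nontrivial points being well-definedness of the extension and the graded Jacobi identity \emph{(v)}. Everything should work uniformly in the parity of $\ell$, because the Lie derivative $\Li{X}^{\A}$ of a skew multivector along a single section $X\in\Gamma_{\A}(M)$ was already produced (in the proposition immediately preceding this one) for arbitrary $\ell$.

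First I would dispose of uniqueness. Any bracket satisfying \emph{(ii)}, \emph{(iii)} and \emph{(iv)} is completely determined: locally every element of $\X^{p}_{[\A]}(M)$ is a finite sum of terms $f\,X_{1}\^\cdots\^X_{p}$ with $f\in\C^{\infty}_{\M}(M)$ and $X_{i}\in\Gamma_{\A}(M)$, and repeated use of \emph{(iv)} together with \emph{(iii)} reduces $[f\,X_{1}\^\cdots\^X_{p},Y]_{\A}$ to a signed combination of wedges of the $X_{i}$ with the already-defined expressions $\Li{X_{i}}^{\A}Y$ and $i_{\dr_{\A}f}Y$ coming from \emph{(ii)}. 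This both pins down the bracket and suggests the defining formula.

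For existence I would take that reduction as a definition: set
\begin{equation}
[X_{1}\^\cdots\^X_{p},\,Y]_{\A}:=\sum_{i=1}^{p}(-1)^{\sigma_{i}}\,X_{1}\^\cdots\^\widehat{X_{i}}\^\cdots\^X_{p}\^\Li{X_{i}}^{\A}Y,
\end{equation}
with the signs $\sigma_{i}$ dictated by the total degree $p+|X|+\ell-1$ appearing throughout the statement, and I would incorporate the function factors via the first formula in \emph{(ii)}. The crucial lemma is that this is independent of the way a multivector is written as such a sum of wedges. The clean way to see this is to exhibit an equivalent manifestly intrinsic Koszul-type formula for $[X,Y]_{\A}$ evaluated on one-forms $\xi_{1},\dots,\xi_{p+q-1}\in\Omega^{1}_{[\A]}(M)$, built from $\hat{\da}$, the Lie derivatives $\Li{X}^{\A}$ acting on $\Omega^{1}_{[\A]}(M)$, and shuffle sums: being defined by evaluation it is decomposition-independent, and a short computation checks that it reproduces \emph{(ii)} and is $\C^{\infty}_{\M}(M)$-linear of the correct type, whence \emph{(i)} is immediate from a degree count. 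Alternatively, when $\ell$ is even one may define the bracket through the operator identity $i_{[X,Y]_{\A}}=\pm[\,\Li{X}^{\A},i_{Y}]$ on $\Omega^{\bullet}_{[\A]}(M)$, where $\Li{X}^{\A}=[i_{X},\dr_{\A}]$, the Cartan relations (\ref{eq_LACartan3}, \ref{eq_LACartan5}) showing inductively that the right-hand side is again contraction by a multivector; but this route uses $\dr_{\A}$ on skew forms, available only for even $\ell$, so I would keep the uniform formula above as the primary construction.

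With the bracket in hand, \emph{(iii)} and \emph{(iv)} are read off from the formula: skew-symmetry follows from the generator-level antisymmetry of $\Li{}^{\A}$ and the graded commutativity of $\^$, and \emph{(iv)} holds by construction. For the graded Jacobi identity \emph{(v)} I would use that, thanks to \emph{(iv)}, the Jacobiator is a graded triderivation of $\^$ in each of its three arguments; hence it suffices to verify \emph{(v)} when each of $X,Y,Z$ is a function or a single section of $\A$. In those base cases the identity collapses onto the graded Jacobi identity (\ref{eq_LAgJI}) and the Leibniz rule (\ref{eq_LALeibniz}) of the Lie algebroid, together with the Cartan relations relating $\Li{}^{\A}$, $i$ and $\dr_{\A}$. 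The main obstacle is precisely this bookkeeping: establishing decomposition-independence of the extension and the triderivation property of the Jacobiator, and — pervasively — getting every sign right under the Koszul dual-pairing convention (\ref{eq_vfactionon1form}) and the shifted total grading $p+|X|+\ell-1$ for arbitrary $\ell$. The conceptual content, that $(\X^{\bullet}_{[\A]}(M),\^,[\cdot,\cdot]_{\A})$ is a graded Gerstenhaber-type algebra, is standard; the work lies entirely in the signs.
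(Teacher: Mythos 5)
Your proposal is correct and follows essentially the route the paper has in mind: the paper omits this proof entirely, declaring it a straightforward generalization of the ordinary Lie algebroid case (Mackenzie, \S 7.1), and your construction --- prescribing the bracket on generators via \emph{(ii)}, extending by the Leibniz rule \emph{(iv)}, securing well-definedness through an intrinsic evaluation formula, and reducing the graded Jacobi identity to generators via the triderivation property of the Jacobiator --- is exactly that standard argument. You also correctly identify the one point where the graded setting forces a choice, namely that for odd $\ell$ the differential $\dr_{\A}$ is unavailable on skew-symmetric forms, so the directly defined Lie derivative $\Li{X}^{\A}$ on multivector fields (valid for arbitrary $\ell$) must serve as the primitive rather than the Cartan-type operator identity.
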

\begin{tvrz}
Let $(\A,\da,[\cdot,\cdot]_{\A})$ be a graded Lie algebroid of an \textbf{arbitrary} degree $\ell$.

Then for each $p \in \N_{0}$ and $X \in \Gamma_{\A}(M)$, there is an $\R$-linear operator $\Li{X}^{\A}: \~\X^{p}_{[\A]}(M) \rightarrow \~\X^{p}_{[\A]}(M)$ of degree $|X|+\ell$, defined for each $Y \in \~\X^{p}_{[\A]}(M)$ and $\xi_{1},\dots,\xi_{p} \in \Omega^{1}_{[\A]}(M)$ as 
\begin{equation}
\begin{split}
(\Li{X}^{\A}Y)(\xi_{1},\dots,\xi_{p}) := & \ [\hat{\da}(X)](Y(\xi_{1},\dots,\xi_{p})) \\
& - \sum_{r=1}^{p} (-1)^{(|X|+\ell)(|Y| + |\xi_{1}| + \dots + |\xi_{r-1}|)} Y(\xi_{1}, \dots, \Li{X}^{\A}\xi_{r}, \dots, \xi_{p}). 
\end{split}
\end{equation}
For each $Y \in \~\X^{p}_{[\A]}(M)$ and $Z \in \~\X^{q}_{[\A]}(M)$, it satisfies the relation
\begin{equation}
\Li{X}^{\A}(Y \odot Z) = (\Li{X}^{\A}Y) \odot Z + (-1)^{(|X|+\ell)(|Y|)} Y \odot (\Li{X}^{\A}Z),
\end{equation}
and for all $X \in \Gamma_{\A}(M)$ and $\xi \in \Omega^{1}_{[\A]}(M)$, there holds the Cartan relation 
\begin{equation}
\Li{X}^{\A} \circ j_{\xi} - (-1)^{(|X|+\ell)|\xi|} j_{\xi} \circ \Li{X}^{\A} = j_{\Li{X}^{\A}\xi}.
\end{equation}
\end{tvrz}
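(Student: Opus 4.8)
The plan is to follow the pattern of the preceding skew-symmetric statement, the only genuine difficulty being the management of Koszul signs in the graded-symmetric setting. First I would check that the right-hand side of the defining formula is a legitimate element of $\~\X^{p}_{[\A]}(M)$, i.e. a graded-symmetric $p$-linear map on $\Omega^{1}_{[\A]}(M)$ of degree $|X|+\ell+|Y|$ obeying the $\C^{\infty}_{\M}(M)$-linearity of Definition \ref{def_pform}. The degree is immediate: $\hat{\da}(X)$ is a vector field of degree $|X|+\ell$, and $\Li{X}^{\A}\xi_{r}$ raises the degree of $\xi_{r}$ by $|X|+\ell$, so every summand carries degree $|X|+\ell+|Y|+\sum_{i}|\xi_{i}|$. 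For the $\C^{\infty}_{\M}(M)$-linearity I would substitute $f\xi_{1}$ into the first slot and use that $\hat{\da}(X)$ is a graded derivation of $\C^{\infty}_{\M}(M)$ together with the Leibniz rule $\Li{X}^{\A}(f\xi)=\hat{\da}(X)(f)\,\xi+(-1)^{(|X|+\ell)|f|} f\,\Li{X}^{\A}\xi$ already established for $1$-forms; the two inhomogeneous $\hat{\da}(X)(f)$-terms then cancel, leaving the required sign. Graded symmetry is inherited from that of $Y$ once one checks that the prefactors $(-1)^{(|X|+\ell)(|Y|+|\xi_{1}|+\dots+|\xi_{r-1}|)}$ transform correctly under the exchange $\xi_{i}\leftrightarrow\xi_{i+1}$, the crucial point being that $\Li{X}^{\A}\xi_{r}$ has degree $|\xi_{r}|+|X|+\ell$. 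The $\R$-linearity in both $X$ and $Y$ is clear.

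Next I would establish the $\odot$-Leibniz rule without expanding the (complicated) explicit product formula, by exploiting that $\~\X^{\bullet}_{[\A]}(M)$ is generated as a $\odot$-algebra over $\C^{\infty}_{\M}(M)$ by $\~\X^{1}_{[\A]}(M)=\Gamma_{\A}(M)$. The cases $p=0$ and $p=1$ of the defining formula give $\Li{X}^{\A}f=\hat{\da}(X)(f)$ and $\Li{X}^{\A}W=[X,W]_{\A}$ respectively, so it suffices to verify $\Li{X}^{\A}(W\odot Z)=(\Li{X}^{\A}W)\odot Z+(-1)^{(|X|+\ell)|W|}W\odot\Li{X}^{\A}Z$ for $W\in\Gamma_{\A}(M)$ by direct evaluation on $\xi_{1},\dots,\xi_{1+q}$, where the $\odot$-expansion of $W\odot Z$ is tractable, and then to extend to arbitrary $Y$ by induction on $p$ using the associativity of $\odot$. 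The sign $(-1)^{(|X|+\ell)|Y|}$ is exactly the Koszul sign for commuting the degree-$(|X|+\ell)$ operator $\Li{X}^{\A}$ past $Y$.

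For the Cartan relation I would argue structurally. Viewing $\xi$ as a section of $\A^{\ast}$, the interior product $j_{\xi}\colon\~\X^{p}_{[\A]}(M)\to\~\X^{p-1}_{[\A]}(M)$ is a graded derivation of $\odot$ of degree $|\xi|$ (the symmetric interior-product identity with $\A$ replaced by $\A^{\ast}$), while $\Li{X}^{\A}$ is, by the previous paragraph, a derivation of degree $|X|+\ell$. Hence the left-hand side $\Li{X}^{\A}\circ j_{\xi}-(-1)^{(|X|+\ell)|\xi|}j_{\xi}\circ\Li{X}^{\A}$ is a graded commutator of derivations and therefore itself a $\odot$-derivation, as is $j_{\Li{X}^{\A}\xi}$ on the right. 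It then suffices to check the identity on the generators: on $\C^{\infty}_{\M}(M)$ both sides vanish since $j_{\xi}$ annihilates $\~\X^{0}_{[\A]}(M)$, and on $W\in\Gamma_{\A}(M)$ the identity collapses to the compatibility of $\Li{X}^{\A}$ with the pairing $\Omega^{1}_{[\A]}(M)\times\Gamma_{\A}(M)\to\C^{\infty}_{\M}(M)$ recorded in \eqref{eq_vfactionon1form}, which is nothing but the $p=1$ case of the definition.

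The conceptual skeleton is thus identical to the skew-symmetric proposition, and the only real obstacle is sign bookkeeping. I expect the two delicate points to be the graded-symmetry verification in the first step and the base case $W\odot Z$ of the Leibniz rule, where the symmetric interior-product convention (the factor $(-1)^{|\omega||X|}$ in $j_{X}$, as opposed to the $(-1)^{|\omega|(|X|-1)}$ of the skew $i_{X}$) must be threaded consistently through every contraction; getting these signs to match the stated coefficients $(-1)^{(|X|+\ell)(\dots)}$ is where essentially all the effort lies.
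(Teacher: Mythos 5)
Your outline is correct. Note that the paper itself supplies no argument for this (or any) appendix proposition -- it states before the list of results that ``the proofs are straightforward'' -- so there is nothing to compare against except the implicit expectation of a direct verification, which is exactly what you provide: the well-definedness and symmetry check, the Leibniz rule for $\odot$ reduced to generators of degrees $0$ and $1$, and the Cartan relation obtained by observing that both sides are graded $\odot$-derivations of the same degree and Koszul sign, so that it suffices to test them on $\C^{\infty}_{\M}(M)$ (where both vanish) and on $\Gamma_{\A}(M)$ (where the identity collapses to the $p=1$ case of the definition via (\ref{eq_vfactionon1form})). The only point worth making explicit is that $\~\X^{\bullet}_{[\A]}(M)$ is generated by elements of degrees $0$ and $1$ only \emph{locally}, over a trivializing open set; since $\Li{X}^{\A}$, $j_{\xi}$ and $\odot$ all commute with restrictions, the identities may be checked on a local frame for $\A$, which legitimizes your induction. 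With that caveat, and granting the sign bookkeeping you correctly identify as the real labour, the argument is complete.
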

\begin{tvrz} \label{tvrz_LASNB2}
For each $p,q \in \N_{0}$, there is an $\R$-bilinear bracket $[\cdot,\cdot]_{\A}: \~\X^{p}_{[\A]}(M) \times \~\X^{q}_{[\A]}(M) \rightarrow \~\X^{p+q-1}_{[\A]}(M)$ having the following properties:
\begin{enumerate}[(i)]
\item One has $|[X,Y]_{\A}| = |X| + |Y| + \ell$ for all $X \in \~\X_{[\A]}^{p}(M)$ and $Y \in \~\X_{[\A]}^{q}(M)$. 
\item For every $f \in \~\X^{0}_{\A}(M) \equiv \C^{\infty}_{\M}(M)$, $X \in \~\X^{1}_{[\A]}(M) \equiv \Gamma_{\A}(M)$ and $Y \in \~\X^{q}_{[\A]}(M)$, one has 
\begin{equation} \label{eq_LASNB2_special}
[f,Y]_{\A} = (-1)^{|f|+1+\ell} j_{\dr_{\A}{f}} Y, \; \; [X,Y]_{\A} = \Li{X}^{\A}Y.
\end{equation}
\item For each $X \in \X^{p}_{[\A]}(M)$ and $Y \in \X^{q}_{[\A]}(M)$, one has
\begin{equation} \label{eq_LASNB2_gsymmetry}
[X,Y]_{\A} = -(-1)^{(|X|+\ell)(|Y|+\ell)} [Y,X]_{\A}. 
\end{equation}
\item For all $X \in \~\X_{[\A]}^{p}(M)$, $Y \in \~\X_{[\A]}^{q}(M)$ and $Z \in \~\X_{[\A]}^{r}(M)$, one has
\begin{equation}
[X, Y \odot Z]_{\A} = [X,Y]_{\A} \odot Z + (-1)^{(|X|+\ell)|Y|} Y \odot [X,Z]_{\A}.
\end{equation}
\item For all $X \in \~\X_{[\A]}^{p}(M)$, $Y \in \~\X_{[\A]}^{q}(M)$ and $Z \in \~\X_{[\A]}^{r}(M)$, one finds
\begin{equation} \label{eq_LASNB2_gJacobi}
[X, [Y,Z]_{\A}]_{\A} = [[X,Y]_{\A}, Z]_{\A} + (-1)^{(|X|+\ell)(|Y|+\ell)}[Y, [X,Z]_{\A}]_{\A}. 
\end{equation}
\end{enumerate}
\end{tvrz}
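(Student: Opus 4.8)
The plan is to define $[\cdot,\cdot]_{\A}$ on the generators of the $\odot$-algebra $\~\X^{\bullet}_{[\A]}(M)$ and extend it by the graded Leibniz rule, exactly paralleling the skew-symmetric construction behind Proposition \ref{tvrz_LASNB} with the exterior product $\^$, the interior product $i_{X}$ and all antisymmetrizations replaced by $\odot$, the symmetric interior product $j_{X}$ and symmetrizations. Since $\~\X^{\bullet}_{[\A]}(M)$ is generated under $\odot$ by $\~\X^{0}_{[\A]}(M) \equiv \C^{\infty}_{\M}(M)$ and $\~\X^{1}_{[\A]}(M) \equiv \Gamma_{\A}(M)$, property (ii) already pins down the bracket whenever its first argument is a generator: it acts as $(-1)^{|f|+1+\ell}\, j_{\dr_{\A}f}$ against a function and as the symmetric Lie derivative $\Li{X}^{\A}$ against a section, the latter operator having been constructed in the proposition immediately preceding this statement. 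I would then extend to arbitrary arguments by requiring the bracket to be a $\odot$-derivation in each slot, which forces (iv); uniqueness of such an extension is clear, and its existence is the well-definedness statement treated next.

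The first substantive point is to show that this recursive definition is consistent, i.e.\ that extending through the Leibniz rule respects the defining relation of $\odot$, namely its graded commutativity $Y \odot Z = (-1)^{|Y||Z|} Z \odot Y$. Here I would lean on the fact that $\Li{X}^{\A}$ is, by the preceding proposition, a degree $|X|+\ell$ derivation of $\odot$ and obeys the Cartan identity relating it to $j_{\xi}$; these guarantee consistency at polynomial degree one, after which an induction on the total degree $p+q$ closes the argument. The signs are the only delicate ingredient, and they are completely fixed by the Koszul convention already adopted for $\Li{X}^{\A}$ and $j_{X}$.

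Granting the bracket, properties (i)--(iii) follow quickly. The degree formula (i) is read off from the degrees of the generators together with (iv). The graded symmetry \eqref{eq_LASNB2_gsymmetry} is checked first on generators --- on a pair of sections it is precisely the graded skew-symmetry axiom (c2) of $\A$, while the remaining low cases follow from the definition of $\hat{\da}$ --- and then propagated to all $p,q$ by induction on $p+q$ using (iv). The genuine obstacle is the graded Jacobi identity \eqref{eq_LASNB2_gJacobi}. The standard device is that its two sides differ by a trilinear map which, thanks to (iv), is a $\odot$-derivation in each of its three arguments, so it vanishes identically as soon as it vanishes on generators; on generators it reduces to the graded Jacobi identity \eqref{eq_LAgJI} of $\A$ together with the symmetric Cartan relations \eqref{eq_LACartan2b}--\eqref{eq_LACartan5b}, which are themselves equivalent to $\dr_{\A}^{2}=0$, i.e.\ to the Jacobi identity of $\A$.

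Finally, I would record a cleaner conceptual route that trivializes the Jacobi identity: by the remark following Example \ref{ex_gPoisson}, the symmetric Schouten--Nijenhuis bracket is the restriction to fiber-wise polynomial functions of a fiber-wise linear graded Poisson bracket on the total space of a suitably shifted dual $\A^{\ast}$, whence \eqref{eq_LASNB2_gJacobi} is simply the Poisson Jacobi identity and the whole proposition amounts to the statement that a graded Lie algebroid structure on $\A$ is the same datum as such a linear Poisson structure. An equivalent strategy transports the already-proven Proposition \ref{tvrz_LASNB} across the d\'ecalage isomorphism identifying symmetric forms on $\A$ with skew-symmetric forms on an odd shift of $\A$; the only subtlety there is the degree-dependent sign twist inherent in that identification, which must be tracked through all five properties.
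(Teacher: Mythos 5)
Your proposal is correct and follows essentially the route the paper itself intends: the appendix states Proposition \ref{tvrz_LASNB2} without proof, remarking only that the arguments are straightforward generalizations of the classical Lie algebroid calculus, and your scheme --- define the bracket on the generators $\~\X^{0}_{[\A]}(M)$ and $\~\X^{1}_{[\A]}(M)$ via (ii), extend by the $\odot$-Leibniz rule (iv), check consistency with graded commutativity of $\odot$ using the symmetric Cartan relations, and reduce the Jacobi identity to generators via the trilinear-derivation trick --- is exactly that standard construction with the signs fixed by the Koszul convention. You have merely supplied the details the paper omits, so no further comparison is needed.
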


\end{document}